\newtheorem{Theorem}{Theorem}
\newtheorem*{Theorem*}{Theorem}
\newtheorem{Proposition}[Theorem]{Proposition}
\newtheorem{Lemma}[Theorem]{Lemma}
\newtheorem{Example}[Theorem]{Example}
\newtheorem{Remark}[Theorem]{Remark}
\newtheorem{Corollary}[Theorem]{Corollary}
\theoremstyle{definition}
\newtheorem{Question}[Theorem]{Question}
\newcommand{\e}{e}
\newcommand{\jn}{\vee}
\newcommand{\mt}{\wedge}
\newcommand{\R}{\mathbb{R}}
\newcommand{\Q}{\mathbb{Q}}
\newcommand{\Z}{\mathbb{Z}}
\newcommand{\N}{\mathbb{N}}
\newcommand{\iv}[1]{{#1}^{-1}}
\newcommand{\Si}{\mathrm{\Sigma}}
\newcommand{\cls}[1]{\mathsf{#1}}
\newcommand{\lgc}[1]{\mathrm{#1}}
\newcommand{\mdl}[1]{\models_{\lgc{#1}}}
\newcommand{\eq}{\approx}
\newcommand{\Conv}{\mathrm{Conv}}
\newcommand{\Vect}{\mathrm{Vect}}
\newcommand{\Lie}{\mathrm{Lie}}
\newcommand{\Stab}{\mathrm{Stab}}
\newcommand{\bx}{\mathbf x}
\newcommand{\by}{\mathbf y}
\newcommand{\bs}{\mathbf s}
\newcommand{\gk}{\mathfrak g}
\newcommand{\nk}{\mathfrak n}
\DeclareMathOperator{\Hom}{Hom}
\newcommand{\onto}{\twoheadrightarrow}
\newcommand{\longto}{\longrightarrow}
\newcommand{\into}{\hookrightarrow}
\newcommand{\longinto}{\lhook\joinrel\longrightarrow}
\newcommand{\abs}[1]{\left| #1 \right|}
\newcommand{\tuple}[1]{\ensuremath{\langle{#1}\rangle}}
\newcommand{\sgr}[1]{{#1}^+} 
\newcommand{\nsgr}[1]{{#1}^\circ}
\title{Ordering groups and the Identity Problem}
\author{Corentin Bodart, Laura Ciobanu, George Metcalfe}
\address{Section de Math{\'e}matiques, Universit{\'e} de Gen{\`e}ve, Switzerland}
\address{Department of Mathematics, Heriot-Watt University, and the Maxwell Institute for Mathematical Sciences, Edinburgh, UK}
\address{Mathematical Institute, University of Bern, Switzerland}
\keywords{nilpotent group,
Identity Problem, 
left-order,
bi-order,
Word Problem,
lattice-ordered group}
\subjclass[2020]{06F15, 20F18, 20F10, 20F60}
\begin{document}

\begin{abstract} 
In this paper, the Identity Problem for certain groups, which asks if the subsemigroup generated by a given finite set of elements contains the identity element, is related to problems regarding ordered groups. Notably, the Identity Problem for a torsion-free nilpotent group corresponds to the problem asking if a given finite set of elements extends to the positive cone of a left-order on the group, and thereby also to the Word Problem for a related lattice-ordered group. 

A new (independent) proof is given showing that the Identity and Subgroup Problems are decidable for every finitely presented nilpotent group, establishing  also the decidability of the Word Problem for a family of lattice-ordered groups. A related problem, the Fixed-Target Submonoid Membership Problem, is shown to be undecidable in nilpotent groups. 

Decidability of the Normal Identity Problem (with  `subsemigroup'  replaced by `normal subsemigroup') for free nilpotent groups is established using the (known) decidability of the Word Problem for certain lattice-ordered groups. Connections between orderability and the Identity Problem for a class of torsion-free metabelian groups are also explored.
\end{abstract}

\maketitle


\section{Introduction} \label{s:introduction}

This paper showcases the interplay between topics of different flavours in group theory --- notably, orderability of groups, membership problems in groups, and word problems for lattice-ordered groups --- and uses a range of tools, from convex geometry to Lie algebras, to establish these connections.

The {\em Identity Problem} for a group $G$ asks if the subsemigroup generated by a given finite set of elements of $G$ contains the identity element $e_G$. This problem has been studied in~\cite{BP2010,BHP2017,Dong2022,DongLICS2023} for various matrix groups, and for certain classes of nilpotent and metabelian groups in~\cite{Dong2023, DongSODA2024, DongSTOC2024}; see~\cite{dong2023recent, lohrey2024membership} for comprehensive surveys.  \emph{The Subgroup Problem}, which asks if the subsemigroup generated by a given finite set of elements of $G$ is a subgroup, has been studied alongside the Identity Problem for a similar range of groups.

Our first goal in this paper is to explain how the Identity Problem for $G$ relates (in some settings) to both the {\em Left-Order Extension Problem} for $G$, which asks if a given finite set of elements of $G$ extends to the positive cone of a left-order on $G$, and the {\em Word Problem} for a corresponding lattice-ordered group. Left-orders on groups --- in particular, their dynamical realizations and associated toplogical spaces --- are an active research area (see, e.g.,~\cite{CR16,DeroinNavasRivas}), while word problems for lattice-ordered groups have been studied widely in the ordered groups literature (see, e.g.,~\cite{HM79,McC82,Kop82,BG09}). We also explain how the  {\em Normal Identity Problem} for $G$, with `subsemigroup' replaced by `normal subsemigroup', relates (in some cases) to both the {\em Bi-Order Extension Problem} for $G$, with `left-order' replaced by `bi-order', and the decidability of the equational theory of a corresponding variety of lattice-ordered groups. 

Our second goal is to address these problems in the setting of {\em nilpotent groups}. In particular, we provide a new (independent) proof that the Identity and Subgroup Problems are decidable for every finitely generated nilpotent group, concluding:

\begin{Theorem}\label{t:main} \:
\begin{enumerate}[leftmargin=8mm, label={\normalfont(\alph*)}]
\item The Identity Problem is decidable for every finitely generated nilpotent group $G$. That is, given a finite set $S$ of elements in $G$, it is possible to decide whether the subsemigroup generated by $S$ contains the identity element $e_G$.
\item The Subgroup Problem is decidable for any finitely generated nilpotent group $G$. That is, given a finite set $S$ of elements in $G$, it is possible to decide whether the subsemigroup generated by $S$ is a subgroup of $G$.
\end{enumerate}
\end{Theorem}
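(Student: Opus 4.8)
The plan is to reduce both parts to the torsion-free case, translate the Identity Problem into the existence of a compatible left-order, and then settle the resulting extension problem for finitely generated torsion-free nilpotent groups by induction on the nilpotency class, using convex geometry in the abelianization together with Mal'cev (Lie algebra) coordinates for the higher commutator quotients. First I would reduce to the torsion-free case: a finitely generated nilpotent group $G$ has a finite, effectively computable torsion subgroup $T$, and $G/T$ is torsion-free nilpotent; write $\pi\colon G\onto G/T$. If a product of elements of $S$ equals $e_{G}$ then so does its image; conversely, if a product $w$ of elements of $S$ satisfies $\pi(w)=e_{G/T}$ then $w\in T$ has some finite order $m$, so $w^{m}=e_{G}$ is again such a product, and if moreover $\pi(s)^{-1}=\pi(w)$ for a word $w$ over $S$, then $ws\in T$ has finite order $m$ and $s^{-1}=(ws)^{m-1}w\in\langle S\rangle^{+}$. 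Together with the elementary observation that $\langle S\rangle^{+}$ is a subgroup of $G$ if and only if $s^{-1}\in\langle S\rangle^{+}$ for every $s\in S$, this reduces both (a) and (b) for $G$ to the corresponding problems for $G/T$; all the group-theoretic manipulations are effective, since finitely generated nilpotent groups are finitely presented and admit algorithmic polycyclic normal forms.

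So assume $G$ is torsion-free nilpotent. By the correspondence between the Identity Problem and left-orders discussed in the Introduction, $e_{G}\notin\langle S\rangle^{+}$ (with $e_{G}\notin S$) holds exactly when $S$ lies in the positive cone of some left-order on $G$, and since any positive cone containing the finite set $S$ can be refined to one arising from a subnormal chain with rank-one torsion-free quotients, it is enough to consider left-orders of this lexicographic type. I would then decide $e_{G}\in\langle S\rangle^{+}$ by induction on the nilpotency class $c$ of $G$. Let $\bar S$ denote the image of $S$ in $G^{\mathrm{ab}}\cong\Z^{d}$, and test, by linear programming over $\Q$, whether $0$ lies in the convex cone generated by the nonzero vectors of $\bar S$. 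If it does not, a rational, and after scaling integral, functional $\phi$ is strictly positive on those vectors, so every product of elements of $S$ that uses a generator outside $\gamma_{2}(G)$ has $\phi$-positive, in particular nontrivial, image in $G^{\mathrm{ab}}$; hence $e_{G}\in\langle S\rangle^{+}$ if and only if $e_{G}\in\langle S\cap\gamma_{2}(G)\rangle^{+}$, which is decided recursively in $\gamma_{2}(G)$ (of smaller nilpotency class), the answer being negative when $\gamma_{2}(G)$ is trivial or $S\cap\gamma_{2}(G)=\emptyset$. If $0$ does lie in the cone, I would peel off further rational supporting functionals to reduce, in finitely many steps with the ambient dimension strictly decreasing, to the case where $\bar S$ positively spans its whole $\Q$-span; there, fixing positive multiplicities $(n_{s})$ with $\sum n_{s}\bar s=0$ and letting them scale, the products $\prod s^{n_{s}}$ over all orderings, with elements of $S\cap\gamma_{2}(G)$ inserted freely, sweep out inside $\gamma_{2}(G)$ a family whose structure is governed, via the Baker--Campbell--Hausdorff formula, by the commutators $[s,s']$ and by an explicit base point, and analysing this reduces $e_{G}\in\langle S\rangle^{+}$ to an instance --- generalized to allow an arbitrary target element of $\gamma_{2}(G)$ and a shifted generating set, so that the induction closes --- of the same problem for $\gamma_{2}(G)$. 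The base case $c\le1$ is the abelian case $G\cong\Z^{d}$, where $e_{G}\in\langle S\rangle^{+}$ precisely when $0$ lies in the convex cone of $S$, decidable by linear programming. At each stage the procedure either produces an explicit relation $e_{G}=s_{i_{1}}\cdots s_{i_{k}}$ or assembles, from the separating functionals found along the way, a left-order placing $S$ in its positive cone; this proves (a) (and, via the same criterion, also settles the pertinent instances of the Word Problem for the associated lattice-ordered groups).

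For (b), note that $\langle S\rangle^{+}$ is a subgroup exactly when, for each $s\in S$, some product of elements of $S$ equal to $e_{G}$ actually uses $s$; the procedure above adapts to detect this by tracking the multiplicity of the distinguished generator $s$ through the same convex-geometric feasibility questions along the lower central series, which gives (b).

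The routine parts are the torsion reduction, the abelian base case, and the ``separated'' case where $0$ is not in the cone. The crux --- and the main obstacle --- is the degenerate case: pinning down exactly which elements of $\gamma_{2}(G)$ are realized by products of the $s\in S$ with a prescribed (scaled) abelianization exponent pattern, accounting for all reorderings (which contribute commutator terms) and for insertions of the generators lying in $\gamma_{2}(G)$, and then framing a statement general enough for the induction on nilpotency class to go through. Making this precise is exactly where the passage to Mal'cev / Lie-algebra coordinates and the repeated appeals to convex geometry do their real work.
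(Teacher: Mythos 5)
Your overall architecture matches the paper's: project to the torsion-free abelianization, run a convex-geometric trichotomy on the position of the origin relative to the image of $S$, handle the strictly separated case with a positive functional, and recurse into kernels of supporting functionals in the boundary case (the paper recurses on Hirsch length rather than nilpotency class here, but that is a minor difference). The torsion reduction, the abelian base case, and the criterion for (b) in terms of invertibility of each generator are all fine.

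The genuine gap is exactly the case you flag as the crux, and your proposed way through it does not work as stated. When $\pi(S)$ positively spans, you propose to track which elements of $\gamma_2(G)$ are realized by products with prescribed exponent patterns and to close the induction by solving ``the same problem for $\gamma_2(G)$, generalized to allow an arbitrary target element and a shifted generating set.'' That generalized problem is the Fixed-Target Submonoid Membership Problem, which is \emph{undecidable} for finitely generated nilpotent groups (Proposition~\ref{fixed_target} of this paper); so the induction cannot be closed on that strengthened statement, and you would need to identify and exploit special structure of the instances that actually arise --- which is precisely the part left unspecified. The paper's resolution is different and makes the whole difficulty evaporate: Proposition~\ref{p:discrete_Chow} shows that if $\mathbf 0$ lies in the interior of $\Conv(\pi(S))$ then $S^+$ is automatically a finite-index \emph{subgroup} of $G$, so the algorithm can answer YES immediately with no further descent into $\gamma_2(G)$. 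Proving that saturation statement is the real work: one inducts on the nilpotency class and shows $S^+\cap\gamma_c(G)$ meets every open half-space of $\gamma_c(G)$, using $[g^{Mp},h^{Mp}]=[g,h]^{M^2p^2}$ so that the central term grows quadratically in $p$ while the error terms coming from writing $g^M$ as (element of $S^+$)$\cdot$(element of $\gamma_c(G)$) grow only linearly. Without this ingredient (or an equivalent one), your outline does not yield a terminating, correct algorithm in the positively-spanning case.
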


\noindent
Although decidability of the Subgroup Problem implies decidability of the Identity Problem, we state them separately in the theorem, as an alternative way of solving the Identity Problem (but not the Subgroup Problem) is possible (see Section~\ref{s:nilpotent_groups}). Theorem~\ref{t:main} relies on Proposition~\ref{p:discrete_Chow}, our main technical tool, which establishes the equivalence of the required algebraic statements about a nilpotent group and convex geometry properties of the group. In particular, Proposition~\ref{p:discrete_Chow} allows us to detect finite-index subgroups among subsemigroups. An equivalent result was independently proven by Shafrir~\cite[Theorem 2]{shafrir2024saturation}. During the writing of this paper Theorem~\ref{t:main} has been generalized by the first author and Dong to virtually solvable matrix groups over algebraic numbers \cite{BodartDong2024}.

As consequences of Theorem~\ref{t:main}, we obtain:

\newtheorem*{c:nilpotent}{Corollary~\ref{c:nilpotent}}
\begin{c:nilpotent}
Let $\tuple{X\mid R}$ be a finite presentation of a torsion-free nilpotent group. Then the $\ell$-group with finite presentation $\tuple{X\mid R}$ has a decidable Word Problem.
\end{c:nilpotent}

\newtheorem*{c:normal}{Corollary~\ref{c:normal}}
\begin{c:normal}
The Normal Identity Problem and Bi-Order Extension Problem are decidable for any finitely generated free nilpotent group of class~$c$.
\end{c:normal}

Considering the strong links between orderability and membership problems in nilpotent groups exhibited above, a natural question is whether similar results hold for other groups. We show that the class of groups $G_\lambda\le\mathrm{Aff}_+(\Q)$ consisting of orientation-preserving affine transformations of the rationals (which are torsion-free metabelian) are fully left-orderable (Corollary~\ref{cor:affine_fully}), and so the question of extending a finite set to a left-order is equivalent to the Identity Problem. While the Identity Problem is known to be decidable in metabelian groups, this equivalence gives a much faster algorithm for the Identity Problem. Section~\ref{s:aff} is a starting point to exploring metabelian groups more generally, and finding further groups that are fully (left-)orderable.

\subsection*{Related membership problems.} Theorem~\ref{t:main} stands in contrast to the undecidability of the \emph{Submonoid Membership Problem} in finitely generated nilpotent groups: for certain nilpotent groups $G$, it is undecidable for an arbitrary finite set $S \subset G$ and element $g \in G$, whether the submonoid generated by $S$ contains $g$ (\cite{romankov2022undecidability}). 

Theorem~\ref{t:main} also stands in contrast to the undecidability of the \emph{Fixed-Target Submonoid Membership Problem} in finitely generated nilpotent groups, proved in Proposition~\ref{fixed_target}. This asks whether given a nilpotent group $G$ together with an element $g_0\in G$, it is possible to decide if $g_0$ is in the submonoid generated by $S$, where $S \subset G$ is an arbitrary finite set. This is an intermediate problem between the Identity Problem on one hand, where $g_0=e_G$ and we ask about membership in a subsemigroup, and the Submonoid Membership Problem on the other hand, where both $S$ and $g_0$ are arbitrary. 

\subsection*{Complexity of our algorithms.} We establish polynomial time complexity for the Identity and Subgroup Problems for a fixed nilpotent group $G$ in Proposition~\ref{p:complexity}. Although this has already been claimed in~\cite{shafrir2024saturation} and~\cite{lohrey2024membership}, no details were given in those references. We also discuss in Section~\ref{sec:complexity} why the \emph{uniform} versions of these problems, when a group presentation for some group $G$ is part of the input, are not polynomial with the algorithms currently available.

\subsection*{Structure of the paper} We give the background on orderability and connections of orderability to the Identity Problem in nilpotent groups in Section~\ref{s:orders_and_identity}, connecting these with word problems for $\ell$-groups in Section~\ref{s:lgroups}. Section~\ref{s:convex_geom} contains the main technical results, linking convex geometry to subsemigroups in nilpotent groups, results that are used in Section~\ref{s:nilpotent_groups} to establish Theorem~\ref{t:main}. In Section~\ref{sec:complexity} we discuss the computational complexity of the Identity and Subgroup Problems. 

Section~\ref{sec:fixed-target} is concerned with the Fixed-Target Submonoid Membership Problem, which is proved to be undecidable in nilpotent groups. Finally, Section~\ref{s:aff} considers an additional class of groups for which full orderability holds, groups of orientation-preserving affine transformations of the rationals, and connects this to the Identity Problem, shifting the focus from nilpotent groups in earlier sections to metabelian ones.
We finish the paper with a few open questions arising from our results on orderability for metabelian groups.

%


\section{Extending partial orders on groups and the Identity Problem} \label{s:orders_and_identity}

Let $G$ be any group with identity element $\e_G$. A partial order $\le$ on $G$ is called a {\em partial left-order} on $G$ if it is left-invariant, i.e., $a \le b$ implies $ca \le cb$ for all $a,b,c \in G$; if $\le$ is also total, it is called a {\em left-order}. A (partial) left-order on $G$ is called a {\em (partial) bi-order} if it is both left- and right-invariant, i.e., $a \le b$ implies $cad \le cbd$ for all $a,b,c,d\in G$.

The {\em positive cone} $P := \{a \in G \mid \e_G < a\}$ of any partial left-order $\le$ on $G$ is a subsemigroup of $G$ that omits $\e_G$, i.e., $a,b\in P$ implies $ab\in P$, and $\e_G\not\in P$. Conversely, given any subsemigroup $P$ of $G$ that omits $\e_G$, the binary relation $\le$ on $G$ defined by $a \le b\,:\Longleftrightarrow\, \iv{a}b \in P \cup \{\e_G\}$ is a partial left-order on $G$ with positive cone $P$. Because of this equivalence we will use the term `order' to refer to either an order $\le$ or its positive cone. 

Partial left-orders on $G$ can be identified with subsemigroups of $G$ that omit $\e_G$, and a subset $S$ of $G$ extends to a partial left-order on $G$ if and only if $\sgr{S}$, the subsemigroup generated by $S$, omits $\e_G$.  

A partial left-order $P$ on $G$ is a partial bi-order if and only if it is {\em normal}, i.e., $a\in P$ and $b\in G$ implies $ba\iv{b}\in P$. Partial bi-orders on $G$ can therefore be identified with normal subsemigroups of $G$ that omit $\e_G$, and a subset $S$ of $G$ extends to a partial bi-order on $G$ if and only if $\nsgr{S}$, the normal subsemigroup generated by $S$, omits $\e_G$. 

If there exists a left-order (bi-order) on $G$, then $G$ is said to be {\em left-orderable} ({\em bi-orderable}). We call $G$ {\em fully left-orderable} ({\em fully bi-orderable}) if every partial left-order (bi-order) on $G$ extends to a left-order (bi-order) on $G$. Let us refer to the problem of deciding if a given finite subset of $G$ extends to a left-order (bi-order) on $G$ as the {\em Left-Order (Bi-Order) Extension Problem}. The following results are direct consequences of the preceding definitions and remarks.
\begin{Proposition}\label{p:fully-orderable-identity} Let $G$ be a group.
\begin{enumerate}[leftmargin=8mm, label={\normalfont(\alph*)}]
\item If $G$ is fully left-orderable, then the Identity and  Left-Order Extension Problems for $G$ are complements: a finite subset $S$ of $G$ extends to a left-order on $G$ if and only if $\e_G\not\in\sgr{S}$.
\item If $G$ is fully bi-orderable, then the Normal Identity and Bi-Order Extension Problems for $G$ are complements: a finite subset $S$ of $G$ extends to a bi-order on $G$ if and only if $\e_G\not\in\nsgr{S}$.
\end{enumerate}
\end{Proposition}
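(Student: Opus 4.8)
The plan is to verify this directly from the dictionary between partial (left/bi-)orders and subsemigroups established in the preceding paragraphs, together with the definition of ``fully (left/bi-)orderable''. The two parts are symmetric, so I would prove (a) in detail and remark that (b) follows by the same argument with ``subsemigroup'' replaced by ``normal subsemigroup'', ``left-order'' by ``bi-order'', and $\sgr{S}$ by $\nsgr{S}$, invoking the normal analogues of the identifications already recorded.

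For (a), let $S$ be a finite subset of $G$ and assume $G$ is fully left-orderable. First I would establish the implication: if $S$ extends to a left-order $P$ on $G$, then in particular $S$ extends to a partial left-order on $G$, namely $P$ itself (every left-order is a partial left-order), so by the identification of partial left-orders with subsemigroups omitting $\e_G$ we get $\sgr{S}\subseteq P$ and hence $\e_G\notin\sgr{S}$ since $\e_G\notin P$. Conversely, suppose $\e_G\notin\sgr{S}$. Then $\sgr{S}$ is a subsemigroup of $G$ omitting $\e_G$, so by the stated equivalence it is (the positive cone of) a partial left-order on $G$ extending $S$. Since $G$ is fully left-orderable, this partial left-order extends to a left-order on $G$, which is then a left-order extending $S$. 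Combining the two directions: $S$ extends to a left-order on $G$ if and only if $\e_G\notin\sgr{S}$, i.e.\ the Identity and Left-Order Extension Problems for $G$ are complements.

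Part (b) is proved identically: if $S$ extends to a bi-order $P$, then $P$ is in particular a partial bi-order, hence a normal subsemigroup omitting $\e_G$ containing $S$, so $\nsgr{S}\subseteq P$ and $\e_G\notin\nsgr{S}$; conversely, if $\e_G\notin\nsgr{S}$ then $\nsgr{S}$ is a normal subsemigroup omitting $\e_G$, hence a partial bi-order extending $S$, which by full bi-orderability extends to a bi-order on $G$.

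Honestly there is no real obstacle here: the statement is essentially a repackaging of the definitions, and the only thing to be careful about is the direction ``$S$ extends to a left-order $\Rightarrow \e_G\notin\sgr{S}$'', where one must note that a total left-invariant order restricted to a partial one still has the property that its positive cone omits $\e_G$ and contains $S$, so it contains the generated subsemigroup. One could even phrase the whole proof as: $S$ extends to a left-order iff ($S$ extends to some partial left-order \emph{and} every partial left-order extends to a left-order is used) iff $\sgr{S}$ omits $\e_G$ --- but I would keep the two implications explicit for clarity.
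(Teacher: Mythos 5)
Your proof is correct and is exactly the argument the paper has in mind: the paper states this proposition as a direct consequence of the preceding identifications (partial left-orders $\leftrightarrow$ subsemigroups omitting $\e_G$, partial bi-orders $\leftrightarrow$ normal subsemigroups omitting $\e_G$) together with the definition of fully (left-/bi-)orderable, and gives no further proof. Both directions are handled properly, including the observation that a left-order extending $S$ has a positive cone containing $\sgr{S}$ and omitting $\e_G$.
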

In order to apply these results in the setting of nilpotent groups, we make use of the following theorems from the literature on ordered groups:

\begin{Theorem}[Malcev~\cite{Mal51}]
Every torsion-free nilpotent group is fully bi-orderable.
\end{Theorem}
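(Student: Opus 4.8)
The plan is to prove the stronger statement that every \emph{partial} bi-order on a torsion-free nilpotent group $G$ extends to a (total) bi-order, since this is exactly what ``fully bi-orderable'' demands. I would argue by induction on the nilpotency class $c$ of $G$, the base case $c\le 1$ (i.e. $G$ abelian and torsion-free) being handled directly: a torsion-free abelian group embeds into a $\Q$-vector space, and there any partial order (= subsemigroup omitting $e$) extends to a total order by a Zorn's-lemma/transfinite argument (equivalently, pick a basis, order it, use the lexicographic order refining the given convex preorder). For the inductive step, take $G$ of class $c\ge 2$, let $Z = Z(G)$ be the center (or the last nontrivial term of the lower central series), which is a nontrivial torsion-free abelian central subgroup, and let $\overline{G} = G/Z$, which is torsion-free nilpotent of class $c-1$ by Malcev's basic structure theory (the quotient by the isolator of a central subgroup stays torsion-free; since $G$ is torsion-free and $Z$ is a direct term of the UCS one checks $Z$ is isolated).

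The heart of the argument is the extension step across the central extension $1 \to Z \to G \to \overline{G} \to 1$. Given a partial bi-order $P \subseteq G$ omitting $e_G$, first enlarge $P$ to a partial bi-order $P'$ that is ``closed along $Z$'' in the sense that $P' \cap Z$ is already a partial order on $Z$ with the property that $g P' = P'$ whenever... — more precisely, I would first extend $P \cap Z$ to a \emph{total} bi-order $Q$ on the abelian group $Z$ (possible by the base case, and automatically normal since $Z$ is central), check that $P \cup Q \cup (P\cdot Q)$ still omits $e_G$ — this is where torsion-freeness and centrality of $Z$ are used, to rule out a relation $p z = e_G$ with $p \in P$, $z \in Z$ positive — and thereby assume $P \supseteq Q$ with $Q$ a total order on $Z$. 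Then push forward: $\overline{P} := \{\,\overline{g} : g \in P,\ g \notin Z\,\}$ is a partial bi-order on $\overline{G}$ omitting $e_{\overline{G}}$ (again using that $P$ is ``$Z$-saturated'' so that $\overline g = e$ forces $g \in Z$, hence $g \in Q$ or $g^{-1}\in Q$, not both, contradicting $g\in P$ unless we are careful — here torsion-freeness enters once more). By the induction hypothesis $\overline{P}$ extends to a total bi-order $\overline{R}$ on $\overline{G}$. Finally define $R := \{\, g \in G : \overline{g} \in \overline{R}\,\} \cup Q$; one verifies $R$ is a subsemigroup (closure follows from $\overline R$ being a semigroup, from $Q$ being one, and from $Z$ central), that it is normal (the three pieces are each conjugation-invariant, using centrality of $Z$ for the $Q$-part), that $R \cap R^{-1} = \emptyset$ and $R \cup R^{-1} \cup \{e_G\} = G$ (totality: an element either has nontrivial image, decided by $\overline R$, or lies in $Z$, decided by $Q$), and that $R \supseteq P$. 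That completes the induction.

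The main obstacle — and the only place where genuine care is required — is verifying that the partial orders stay \emph{strict} (omit $e_G$) at each enlargement: both when fusing $P$ with the chosen total order $Q$ on the center, and when forming the quotient order $\overline P$. In a general central extension this can fail, and what rescues it is precisely that $G$ is torsion-free: if $w$ is a product of elements of $P$ that equals $e_G$, then projecting to $\overline G$ gives a product of elements of $\overline R$ equal to $e_{\overline G}$, forcing (by strictness of $\overline R$, the induction hypothesis) each factor's image to be trivial, so all factors lie in $Z$; then $w$ is a product of positive elements of the totally ordered abelian group $(Z,Q)$, hence $w \ne e_G$ — unless some factor was a $Q$-positive central element whose image we mishandled, which is why the bookkeeping of ``which coset representatives are positive'' must be done via $\overline R$ on the quotient and $Q$ on $Z$ in a consistent, non-overlapping way. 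Everything else (normality of the pieces, closure under multiplication, totality) is a routine check once centrality of $Z$ is in hand, and the abelian base case is standard.
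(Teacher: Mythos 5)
The paper does not prove this statement---it is quoted from Mal'cev with a citation---so your proposal has to stand on its own, and it contains a genuine gap at the push-forward step. You need $\overline{P}:=\{\overline{g}\,:\,g\in P',\ g\notin Z\}$ to be a partial bi-order on $\overline{G}=G/Z$, but this set need not be closed under multiplication: two non-central elements of $P'$ can multiply to a \emph{non-trivial} element of $Z$, in which case $\overline{P}$ contains some element together with its inverse and is not a subsemigroup omitting the identity, so the induction hypothesis cannot be applied to it. Concretely, let $G=H\times\Z$ with $H$ the discrete Heisenberg group generated by $x,y$ and $z=[x,y]$, and let $P$ be the normal subsemigroup generated by $g=(x,1)$ and $h=(x^{-1},0)$. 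Every element of $P$ has the form $(x^{a-b}z^{K},a)$ with $a,b\ge 0$, $a+b\ge 1$, so $e_G\notin P$ and $P$ is a partial bi-order; yet $g,h\notin Z(G)=\langle z\rangle\times\Z$ while $gh=(e_H,1)\in Z(G)\setminus\{e_G\}$, so after your saturation step the images of $g$ and $h$ in $G/Z(G)$ are mutually inverse elements of $\overline{P}$. Your closing paragraph's verification concerns the pulled-back order $R$ and silently assumes $\overline{P}\subseteq\overline{R}$ for some total bi-order $\overline{R}$, which presupposes exactly the point at issue. The example also shows the obstruction is not merely about strictness: $P$ does extend to a bi-order on $G$, but only to one in which part of the centre is the most significant ``digit'' rather than the final tiebreaker, whereas your scheme rigidly forces all of $Z$ to be ordered last.

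The standard repair changes the inductive scheme: extend $P$ to a maximal partial bi-order $M$ by Zorn's lemma and prove that $M$ must be total, which reduces to showing that for every partial bi-order $P$ and every $g\ne e_G$, at least one of the normal subsemigroups generated by $P\cup\{g\}$ and $P\cup\{g^{-1}\}$ omits $e_G$. That implication is the real content of Mal'cev's theorem; it is proved by induction on the nilpotency class using commutator identities and isolator properties (see, e.g., Botto Mura and Rhemtulla, \emph{Orderable Groups}), and your proposal does not engage with it. Your base case and the fusion of $P$ with a total order $Q$ on the centre are fine as far as they go; a minor further point is that you should take $Z=Z(G)$ (whose quotient is torsion-free by another lemma of Mal'cev) rather than $\gamma_c(G)$, which need not be isolated in $G$.
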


\begin{Theorem}[{Rhemtulla~\cite[Theorem~4]{Rhe72}}]\label{t:Rhemtulla}
Every torsion-free nilpotent group is fully left-orderable.
\end{Theorem}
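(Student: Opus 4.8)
The plan is to prove the theorem by induction on the Hirsch length, after first reducing to finitely generated groups. By the correspondence recalled above, what must be shown is that every subsemigroup $P$ of a torsion-free nilpotent group $G$ with $\e_G\notin P$ is contained in the positive cone of a left-order on $G$ --- equivalently, that some maximal subsemigroup of $G$ omitting $\e_G$ and containing $P$ is a total cone.

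I would carry out the reduction to the finitely generated case as follows. Extend $P$ by Zorn's lemma to a subsemigroup $P^{*}$ maximal among those omitting $\e_G$. If $P^{*}$ were not total, pick $g\ne\e_G$ with $g,\iv g\notin P^{*}$; then $\e_G\in\sgr{P^{*}\cup\{g\}}$ and $\e_G\in\sgr{P^{*}\cup\{\iv g\}}$, each witnessed by a single finite product, so $g$ and the finitely many elements of $P^{*}$ occurring in these products generate a finitely generated $H$ in which $P^{*}\cap H$ is a subsemigroup omitting $\e_G$, missing $g$ and $\iv g$, yet reaching $\e_G$ after adjoining either of them --- contradicting the theorem for $H$. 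The base case is $G$ torsion-free abelian: embedding $G$ in $V=\Q\otimes_{\Z}G$, the cone $\Q_{\ge 0}P$ omits $0$ (clearing denominators, a vanishing nonnegative rational combination of elements of $P$ yields one over $\Z$, i.e.\ $\e_G\in P$), and a maximal convex cone of the vector space $V$ omitting $0$ is a total cone (an elementary Hahn--Banach-type argument), whose trace on $G$ is the desired order.

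For the inductive step, take $G$ finitely generated torsion-free nilpotent of class $\ge 2$. Its centre $Z$ is nontrivial torsion-free abelian, and for any $z\in Z\setminus\{\e_G\}$ the subgroup $A:=\sqrt{\tuple{z}}$ is a rank-one isolated central normal subgroup, so $\ov G:=G/A$ is torsion-free nilpotent of strictly smaller Hirsch length (write $\pi\colon G\onto\ov G$). The workhorse is the lexicographic glueing: for $N\lhd G$ and total cones $Q_N$ on $N$, $\ov Q$ on $G/N$, the set $\iv{\pi_N}(\ov Q)\cup Q_N$ is a total cone on $G$. Two easy cases follow at once. If $\pi(P)$ omits $\e_{\ov G}$ --- that is, $P\cap A=\emptyset$ --- or more generally if $\sgr{\pi(P\setminus A)}$ omits $\e_{\ov G}$, I extend that subsemigroup by the inductive hypothesis and glue with an order on $A$ making $A$ convex. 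And if $P$ is contained in some \emph{proper} isolated normal subgroup $N$, then $N$ has infinite index, hence $h(N)<h(G)$, so by induction $P$ extends to a total cone on $N$ which then extends to $G$ along $N\lhd G$.

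What is left --- and what I expect to be the main obstacle --- is the case in which $P$ meets $A$ and, after replacing $P$ by $\sgr{P\cup\{a\}}$ for the generator $a$ of $A$ making $P\cap A$ positive (this still omits $\e_G$), there is a relation $p_1\cdots p_k=a^{n}$ with $n\ge 1$ and all $p_i\in P\setminus A$. Then every left-order containing $P$ has $\e_G<p_1<a^{n}$ with $p_1\notin A$, so $A$ cannot be convex and the lexicographic recipe fails; a ``tilted'' order is forced, exactly as for the discrete Heisenberg group with $P=\sgr{\{a,\,ca^{-1}\}}$, where one saves the situation by observing that $P$ lies in the proper abelian normal subgroup $\tuple{a,c}$. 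The content of the proof will be to show this is always possible: whenever $\e_G\notin\sgr{P}$, either $P$ lies in a proper isolated normal subgroup (so the previous case applies), or some homomorphism $G\to\Z$ is positive on $P$ (so its kernel is a proper co-abelian normal subgroup disjoint from $P$, and one glues lexicographically along it), the remaining configuration being ruled out by nilpotency and torsion-freeness. The most transparent way I see to establish this is the linearisation that underlies Proposition~\ref{p:discrete_Chow}: realise $G$ inside the simply connected nilpotent Lie group $(\gk_{\R},*)$ via $\log$, translate $\e_G\notin\sgr{P}$ into the statement that the closed convex cone $\overline{\mathrm{cone}}(\log P)\subseteq\gk_{\R}$ is pointed enough to be swept out by a complete flag of subalgebras of $\gk_{\R}$ (each an ideal of the next, as maximal subalgebras of a nilpotent Lie algebra are ideals), and read off the left-order on $G$ by iterating the lexicographic extension along the images of that flag under $\exp$.
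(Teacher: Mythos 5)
Your skeleton---reduction to the finitely generated case via Zorn's lemma, induction on Hirsch length, the torsion-free abelian base case, and the lexicographic gluing of a left-order on a normal subgroup with one on the quotient---is sound, and your two ``easy cases'' are handled correctly. (Note that the paper itself gives no proof of this theorem; it only cites Rhemtulla.) The genuine gap is exactly where you predicted it: the hard case is not resolved, and the dichotomy you propose to resolve it is false. You claim that whenever $\e_G\notin\sgr{P}$, either $P$ lies in a proper isolated normal subgroup or some homomorphism $G\to\Z$ is \emph{strictly} positive on $P$ (strictness is what your parenthetical ``kernel \ldots disjoint from $P$'' requires). Take $G$ the discrete Heisenberg group, $z=[x,y]$ a generator of the centre, and $P=\{x,\ \iv{x}z,\ y\}$. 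Then $\e_G\notin\sgr{P}$ (any product with trivial abelianisation uses no $y$'s and equally many $x$'s and $\iv{x}z$'s, hence equals a positive power of $z$), $P$ generates $G$ so lies in no proper subgroup, and every homomorphism to $\R$ kills $z=x\cdot(\iv{x}z)$, so cannot be strictly positive on both $x$ and $\iv{x}z$. This example sits squarely in your hard case ($x\cdot\iv{x}z=z\in A$ with both factors outside $A$), so the configuration you hoped to rule out ``by nilpotency and torsion-freeness'' actually occurs. The Lie-algebra ``flag of subalgebras swept out by the cone'' alternative at the end is not carried out and, as stated, is essentially a restatement of what must be proved.

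The statement that is actually true---and that Proposition~\ref{p:discrete_Chow}, whose proof is independent of Rhemtulla, delivers---is that $\e_G\notin\sgr{P}$ forces a nonzero $\phi\colon G\to\R$ with $\phi\ge 0$ on $P$: since $\sgr{P}$ is then not a (finite-index) subgroup, negating condition (4) negates condition (3). The kernel $N=\ker\phi$ is in general \emph{not} disjoint from $P$; instead one recurses into it. Since $h(N)<h(G)$, the induction hypothesis extends $P\cap N$ to a left-order $Q_N$ on $N$; meanwhile $\sgr{\pi(P\setminus N)}$ omits the identity of $G/N$ because any such product has strictly positive $\phi$-value, so it extends to a total order $\ov{Q}$ on the torsion-free abelian group $G/N$. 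Your own gluing lemma then yields the left-order $\iv{\pi}(\ov{Q})\cup Q_N$ on $G$ containing $P$, and the induction closes (making the central subgroup $A$ and the two easy cases unnecessary). As written, however, your argument has a hole at its central step.
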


Combining these two theorems with Proposition~\ref{p:fully-orderable-identity} yields:

\begin{Corollary}
Let $G$ be any torsion-free nilpotent group.
\begin{enumerate}[leftmargin=8mm, label={\normalfont(\alph*)}]
\item The Identity and Left-Order Extension Problems  are complements for $G$: a finite subset $S$ of $G$ extends to a left-order on $G$ if and only if $\e_G\not\in\sgr{S}$.
\item The Normal Identity and Bi-Order Extension Problems are complements for $G$: a finite subset $S$ of $G$ extends to a bi-order on $G$ if and only if $\e_G\not\in\nsgr{S}$.
\end{enumerate}
\end{Corollary}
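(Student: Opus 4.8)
The plan is to feed the two classical orderability theorems directly into Proposition~\ref{p:fully-orderable-identity}. For part~(a), Theorem~\ref{t:Rhemtulla} (Rhemtulla) tells us that every torsion-free nilpotent group $G$ is fully left-orderable, so Proposition~\ref{p:fully-orderable-identity}(a) applies verbatim and yields: a finite subset $S$ of $G$ extends to a left-order on $G$ if and only if $\e_G\notin\sgr{S}$. For part~(b), Malcev's theorem tells us that every torsion-free nilpotent group $G$ is fully bi-orderable, so Proposition~\ref{p:fully-orderable-identity}(b) applies and yields: $S$ extends to a bi-order on $G$ if and only if $\e_G\notin\nsgr{S}$. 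The only care needed is to invoke the correct half of Proposition~\ref{p:fully-orderable-identity} in each case, and to note that both theorems are stated for \emph{all} torsion-free nilpotent groups, not merely finitely generated ones, which is exactly the hypothesis of the corollary.

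For completeness it is worth recalling why Proposition~\ref{p:fully-orderable-identity} holds, since the real content of the corollary lives there. By the discussion preceding that proposition, partial left-orders on $G$ correspond bijectively to subsemigroups of $G$ that omit $\e_G$, via $P\mapsto(a\le b\iff \iv{a}b\in P\cup\{\e_G\})$; consequently a subset $S$ extends to a partial left-order exactly when $\sgr{S}$ omits $\e_G$. So if $\e_G\notin\sgr{S}$ then $\sgr{S}$ itself is a partial left-order extending $S$, and full left-orderability promotes it to an actual left-order containing $S$; conversely, if a left-order on $G$ contains $S$, then its positive cone is an $\e_G$-omitting subsemigroup containing $S$, hence contains $\sgr{S}$, so $\e_G\notin\sgr{S}$. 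The bi-order half is word-for-word identical, replacing `subsemigroup' by `normal subsemigroup', `partial left-order' by `partial bi-order', and using the correspondence between partial bi-orders and $\e_G$-omitting normal subsemigroups.

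There is essentially no obstacle: once Proposition~\ref{p:fully-orderable-identity} and the theorems of Malcev and Rhemtulla are in hand, the corollary is immediate. If anything, the subtle point is purely expository --- making sure the reader sees that `full (bi-)orderability' is precisely the hypothesis that bridges the combinatorial membership statement ($\e_G\notin\sgr{S}$, resp.\ $\e_G\notin\nsgr{S}$) and the order-theoretic statement (extendability to a total left-order, resp.\ bi-order), so that the two decision problems are genuinely complementary.
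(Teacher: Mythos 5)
Your proposal is correct and is exactly the paper's argument: the corollary is obtained by feeding Rhemtulla's theorem (full left-orderability) and Malcev's theorem (full bi-orderability) into the respective parts of Proposition~\ref{p:fully-orderable-identity}. The extra paragraph unpacking why that proposition holds is accurate but not needed, since the paper treats the proposition as already established.
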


Theorem~\ref{t:main} establishes that the Identity Problem, and hence also the Left-Order Extension Problem, are decidable for any finitely generated nilpotent group, noting that the latter is trivial for any group with torsion. To prove this theorem, we will use the fact that every left-order on a torsion-free nilpotent group possesses a key property. A left-order $P$ on a group $G$ is said to be \emph{Conradian} if $a,b\in P$ implies $\iv{b}ab^n\in P$ for some $n\in\N^+$. 

\begin{Theorem}[{Rhemtulla~\cite[Theorem 2]{Rhe72}}]
\label{t:secondRhemtulla}
Every left-order on a torsion-free nilpotent group is Conradian.
\end{Theorem}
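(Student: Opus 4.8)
**Proof proposal for Theorem (Rhemtulla, Theorem 2 of \cite{Rhe72}): Every left-order on a torsion-free nilpotent group is Conradian.**

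The plan is to argue by induction on the nilpotency class $c$ of $G$, exploiting the fact that the terms of the lower (or upper) central series are normal subgroups that behave well with respect to any left-order. Fix a left-order $P$ on the torsion-free nilpotent group $G$ and let $a,b \in P$; I want to find $n \in \N^+$ with $\iv{b}ab^n \in P$. The base case $c=1$ is trivial, since an abelian left-ordered group is bi-ordered and $\iv{b}ab^n = ab^{n-1} \in P$ already for $n=1$ (indeed for all $n \geq 1$). For the inductive step, let $Z = Z(G)$ be the centre, which is a nontrivial normal subgroup, and consider the quotient $\bar G = G/Z$, which is torsion-free nilpotent of class $c-1$. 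The subtlety is that $P$ need not induce a left-order on $\bar G$ (the preimage of $e_{\bar G}$ is all of $Z$, not just $e_G$), so this naive quotient argument does not immediately work; this is the main obstacle, and it is why Rhemtulla's theorem is not a one-line consequence of Malcev's.

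To get around this, I would instead use the \emph{convex jump} / Conrad-theoretic machinery: take a convex subgroup $C$ of $(G,P)$ that is maximal subject to not containing $a$ (or, depending on the setup, the largest convex subgroup not containing some chosen element), so that $C$ is normal in its "cover" $C'$ and the quotient $C'/C$ is an archimedean left-ordered — hence order-isomorphic to a subgroup of $(\R,+)$ — group. The key structural input is that in a nilpotent group one can show the relevant convex subgroups are actually \emph{normal in $G$}, using that subnormal subgroups of nilpotent groups are subnormal with abelian-by-... structure and, crucially, that in a torsion-free nilpotent group every convex subgroup of a left-order is normal — this last point can be extracted from the bi-orderability (Malcev) plus the fact that any two left-orders on a torsion-free nilpotent group have the "same" convex subgroups, or proven directly by induction. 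Once the chain of convex subgroups $\{e_G\} = C_0 \lhd C_1 \lhd \cdots \lhd C_k = G$ consists of normal subgroups with each quotient $C_{i+1}/C_i$ order-isomorphic to a subgroup of $\R$, the Conradian property follows: given $a, b \in P$, look at the least $i$ with $b \in C_{i+1}$; if $a \in C_i$ then $\iv{b} a b \in C_i$ since $C_i \lhd C_{i+1}$, and one reduces to a lower level, while if $a \notin C_i$, both $a$ and $b$ project to positive elements of the archimedean (hence Conradian, indeed bi-ordered) quotient $C_{i+1}/C_i$, so $\iv{b}ab^n$ projects into the positive cone for suitable $n$ and hence lies in $P$ since everything in $C_i$ is infinitely small.

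The steps, in order: (1) reduce to showing the family of convex subgroups of $(G,P)$ is a normal chain with archimedean quotients — this is the standard Conrad criterion for a left-order to be Conradian, so I would cite or quickly reprove it; (2) prove by induction on nilpotency class that every convex subgroup of a left-order on a torsion-free nilpotent group is normal in $G$, the one genuinely group-theoretic lemma, where I expect to use that $G/C$ acts on the archimedean quotient $C'/C \hookrightarrow \R$ by order-preserving automorphisms, that $\mathrm{Aut}_+(\R\text{-subgroup})$ embeds in $\R_{>0}$ which is torsion-free abelian, and that a nilpotent group has no nontrivial homomorphism to... — more carefully, that the action of the nilpotent group $G$ by conjugation on the archimedean quotient must be trivial because it factors through a torsion-free abelian group while $[G,C']\subseteq$ a faster-descending term; (3) assemble the pieces. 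I expect step (2) — showing normality of convex subgroups, equivalently that the left-order is "Conrad-like" because conjugation cannot scale an archimedean quotient — to be the crux, and the cleanest route may well be to invoke Malcev's bi-orderability theorem (already stated above) to compare $P$ with a bi-order and transfer convexity information, rather than arguing from scratch.
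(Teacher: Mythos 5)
The paper does not prove this statement---it is imported directly from Rhemtulla~\cite{Rhe72}---so there is no in-paper argument to compare against; your proposal must stand on its own, and as written it has a genuine gap at its crux. You correctly reduce the problem to showing that each convex jump $C\prec C'$ of the left-order satisfies $C\lhd C'$ with $C'/C$ Archimedean (this is the standard characterization of the Conrad property), but you then \emph{assert} these two facts (``the quotient $C'/C$ is an archimedean left-ordered group'') rather than prove them. For a general left-order neither holds: normality of $C$ in its cover and Archimedeanness of the quotient are exactly what fail for non-Conradian orders (for instance, the type~(2) orders on $G_\lambda$ in Section~\ref{s:aff} of this paper are explicitly non-Conradian, so each has a convex jump violating one of these properties). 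Establishing them for nilpotent $G$ is the entire content of the theorem, and your sketch supplies no mechanism for doing so. Moreover, both routes you offer for normality fail: the claim that any two left-orders on a torsion-free nilpotent group have the same convex subgroups is false already for $\Z^2$ (the lexicographic order has convex chain $0<\Z\times\{0\}<\Z^2$, while the order induced by an irrational linear form has only the trivial convex subgroups), and the proposed action argument lets $G/C$ act on $C'/C\into\R$ by conjugation, which presupposes both the normality and the Archimedean embedding you are trying to establish.

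To close the gap there are two standard arguments. Rhemtulla's original proof is a direct commutator computation showing that for every convex jump $C\prec C'$ one has $[C',G]\subseteq C$, so the jumps are central in $G/C$, hence normal with Archimedean quotients; this is the precise form of your step (2) and is genuinely nontrivial. A cleaner modern route avoids convex subgroups entirely: if a left-order is not Conradian, witnessed by $a,b\in P$ with $ab^n\prec b$ for all $n\ge 1$, then $ab$ and $ab^2$ generate a free subsemigroup (the ``crossing'' criterion; see~\cite[\S 3.2]{DeroinNavasRivas}); since finitely generated nilpotent groups have polynomial growth they contain no free subsemigroup, and the general case follows because the Conrad condition is checked two elements at a time inside the finitely generated nilpotent subgroup $\langle a,b\rangle$. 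Either argument supplies the missing step; your framework is the right vocabulary, but as written it assumes the conclusion.
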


The following fundamental property of Conradian orders is known as Conrad's theorem; for a modern treatment of the property, see~\cite[Corollary 3.2.28]{DeroinNavasRivas}.

\begin{Theorem}[{Conrad~\cite[Theorem 4.1]{conrad}}]\label{t:conrad}
Let $G$ be any finitely generated group and let $P$ be a Conradian order on $G$. Then there exists a homomorphism $f\colon G\to\R$ such that $a\in P$ implies $f(a)\ge 0$.
\end{Theorem}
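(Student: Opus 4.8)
\emph{Proof proposal.} The plan is to run the classical argument through the chain of convex subgroups, using finite generation to isolate the right ``jump'' and H\"older's theorem to produce $f$. Recall that a subgroup $H\le G$ is \emph{$P$-convex} if $\e_G\le x\le h$ with $h\in H$ and $x\in G$ forces $x\in H$, that for any left-order the $P$-convex subgroups of $G$ form a chain under inclusion, and that a union of a chain of convex subgroups is again convex.

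First I would use finite generation to produce a largest proper convex subgroup: the union $C$ of all proper convex subgroups of $G$ is convex, and $C\ne G$, since otherwise each of the finitely many generators of $G$ would lie in some proper convex subgroup and --- these forming a finite chain --- the largest of them would contain every generator, hence equal $G$, a contradiction. (This is the only place finite generation is used: it guarantees the chain of convex subgroups has a ``top jump''.) Thus $(C,G)$ is a convex jump. The heart of the matter is then the following structural property of Conradian orders, which I would invoke next: \emph{for a Conradian left-order, every convex jump $(C',G')$ satisfies $C'\triangleleft G'$, and $G'/C'$ with its induced order is order-isomorphic to a nontrivial subgroup of $(\R,+,\le)$}, hence is abelian and Archimedean. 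Granting this for $(C,G)$, I get an order-embedding $\iota\colon G/C\hookrightarrow\R$, and composing with the quotient map $\pi\colon G\onto G/C$ gives a homomorphism $f:=\iota\circ\pi\colon G\to\R$ (nonzero, as $G/C$ is a nontrivial subgroup of $\R$). Since the positive cone of the induced order on $G/C$ is $\pi(P\setminus C)$, every $a\in P$ satisfies $\pi(a)\ge\e_GC$ and therefore $f(a)=\iota(\pi(a))\ge 0$, which is exactly what is required; the trivial group is immediate.

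The crux, and where I expect the real difficulty, is proving that structural property, i.e.\ that Conradian convex jumps are normal with real quotient. The strategy: once $C'\triangleleft G'$ is in hand, the induced left-order on $G'/C'$ is again Conradian (the Conradian property descends to quotients by convex normal subgroups) and has \emph{no} proper nontrivial convex subgroups, because any such subgroup would pull back along $G'\to G'/C'$ to a convex subgroup strictly between $C'$ and $G'$, contradicting that $(C',G')$ is a jump. One then needs the key lemma that \emph{a Conradian left-order with no proper nontrivial convex subgroups is Archimedean}: for a Conradian order and $a>\e_G$, the convex subgroup generated by $a$ equals $\{x\in G:\ \max(x,\iv x)\le a^n\text{ for some }n\in\N^+\}$, and verifying that this set is closed under multiplication is precisely where the Conradian inequality $\iv b\, a\, b^n>\e_G$ is used; so a non-Archimedean Conradian order always contains a proper nontrivial convex subgroup. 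H\"older's theorem then order-embeds $G'/C'$ into $\R$, and normality $C'\triangleleft G'$ is a further standard consequence of the Conradian property. I therefore expect this ``convex subgroup generated by an element'' lemma for Conradian orders to be the main obstacle; the rest is essentially formal. (Alternatively, one could extract $f$ from the dynamical realization of $(G,P)$ as a group of orientation-preserving homeomorphisms of $\R$, using that a Conradian order yields a crossing-free action whose germ at the bottom of its support is by translations, but the convex-subgroup route looks more self-contained.)
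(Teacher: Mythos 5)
The paper does not prove this statement: it is imported verbatim from Conrad's 1959 paper, with a pointer to the modern treatment in Deroin--Navas--Rivas (Corollary 3.2.28). Your outline is precisely the argument given in those references, and it is correct: finite generation makes the union of the chain of proper $P$-convex subgroups a proper convex subgroup $C$, so $(C,G)$ is a convex jump; the Conrad structure theorem for that jump ($C\triangleleft G$ and $G/C$ order-embeds in $(\R,+)$ via H\"older, the Archimedean property of the quotient being exactly where the Conradian inequality enters, through the lemma that for Conradian orders the convex subgroup generated by $a\succ\e_G$ is $\{x:\exists n,\ a^{-n}\preceq x\preceq a^n\}$) yields $f=\iota\circ\pi$ with $f(a)\ge 0$ for $a\in P$. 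You correctly identify that lemma as the one genuinely nontrivial ingredient, and you even deliver the nonzero homomorphism that the paper actually needs in Corollary~\ref{crl:easy_out} (the theorem as literally stated would be vacuous with $f\equiv 0$). Nothing to correct.
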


\begin{Corollary} \label{crl:easy_out}
Let $G$ be any finitely generated torsion-free nilpotent group and let $S\subseteq G$. If $\e_G\notin\sgr{S}$, then there exists a non-zero homomorphism $f\colon G\to\R$ such that $f(a)\ge 0$ for all $a\in S$. 
\end{Corollary}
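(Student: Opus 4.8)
The plan is to string together the dictionary between subsemigroups and partial left-orders from Section~\ref{s:orders_and_identity} with the quoted theorems of Rhemtulla and Conrad. The first step is the trivial reformulation: since $\e_G\notin\sgr{S}$, the subsemigroup $\sgr{S}$ omits $\e_G$, so it is the positive cone of a partial left-order on $G$ containing $S$. As $G$ is torsion-free nilpotent, Theorem~\ref{t:Rhemtulla} makes $G$ fully left-orderable, so this partial left-order extends to a left-order $P$ on $G$ with $S\subseteq P$. Theorem~\ref{t:secondRhemtulla} then gives that $P$ is Conradian, and Conrad's theorem (Theorem~\ref{t:conrad}) supplies a homomorphism $f\colon G\to\R$ with $f(a)\ge 0$ for all $a\in P$, hence in particular $f(a)\ge 0$ for all $a\in S$.

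The one point needing care --- and the main obstacle --- is that $f$ can be chosen \emph{non-zero}, since Theorem~\ref{t:conrad} as stated only produces some homomorphism and the zero map qualifies whenever $P$ has no positive elements. I would handle this by first noting that we may assume $G\ne\{\e_G\}$ (for trivial $G$ the hypothesis can hold while no non-zero homomorphism exists, so the statement is really about non-trivial $G$). When $G\ne\{\e_G\}$, every left-order on $G$ has non-empty positive cone, so the left-order $P$ built above is non-trivial; and for a non-trivial finitely generated group carrying a Conradian left-order, Conrad's theorem in its standard form yields a \emph{non-trivial} homomorphism to $\R$ that is non-negative on the positive cone. That sharper statement is what I would cite from~\cite{conrad} or~\cite[Corollary~3.2.28]{DeroinNavasRivas}; alternatively one argues directly, using that the $P$-convex subgroups of $G$ form a chain, that finite generation forces a largest proper convex subgroup $G^{\ast}$, that the Conradian hypothesis makes $G^{\ast}$ normal with $G/G^{\ast}$ order-isomorphic to a subgroup of $(\R,+)$, and that $G\onto G/G^{\ast}\into\R$ is then the required non-zero homomorphism.

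Finally, it is worth recording that the conclusion is visible directly in the degenerate case $S=\emptyset$: there the non-negativity condition is vacuous, and a non-trivial finitely generated torsion-free nilpotent group has infinite abelianization (otherwise every lower-central factor, being a quotient of a tensor power of the abelianization, would be finite and so would $G$), hence admits a non-zero homomorphism to $\R$. Apart from pinning down the non-triviality of $f$, the argument is a direct assembly of the results already quoted, so I do not anticipate further difficulty.
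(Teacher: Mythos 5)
Your proposal is correct and follows essentially the same route as the paper: identify $\sgr{S}$ as a partial left-order, extend it via Rhemtulla's Theorem~\ref{t:Rhemtulla}, invoke Theorem~\ref{t:secondRhemtulla} to see the extension is Conradian, and apply Conrad's theorem. Your extra care about the non-triviality of $f$ is a fair observation --- Theorem~\ref{t:conrad} as stated in the paper omits the word ``non-zero,'' and the paper's proof silently relies on the standard (non-trivial) form of Conrad's theorem, exactly as you do.
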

\begin{proof}
Suppose that $\e_G\notin\sgr{S}$. Then $\sgr{S}$ is a partial left-order on $G$ that extends to a left-order $P$ on $G$, by Theorem~\ref{t:Rhemtulla}. But $P$ is Conradian, by Theorem~\ref{t:secondRhemtulla}, so the existence of a suitable homomorphism $f$ follows from Theorem~\ref{t:conrad}.
\end{proof}


\section{Word problems for $\ell$-groups and the Identity Problem} \label{s:lgroups}


A {\em lattice-ordered group} ({\em $\ell$-group}) is an algebraic structure $\tuple{L,\mt,\jn,\cdot,\iv{},\e}$  such that $\tuple{L,\cdot,\iv{},\e}$ is a group, $\tuple{L, \mt, \jn}$ is a lattice with a lattice-order defined by $a \le b\,:\Longleftrightarrow\,a\mt b=a$, and $a \le b$ implies $cad \le cbd$ for all $a,b,c,d \in L$. The class $\cls{LG}$ of $\ell$-groups forms a variety (see, e.g.,~\cite{AF88} for details). Key examples of $\ell$-groups are provided by sets  $Aut(\tuple{C,\preceq})$ of order-preserving permutations of a totally ordered set $\tuple{C,\preceq}$, with the group operation given by composition of functions and the lattice-order defined by setting $f\leq g\,:\Longleftrightarrow\,f(a) \preceq g(a)$ for all $a \in C$. In particular,  $Aut(\tuple{\Z,\le})$ is isomorphic to the abelian $\ell$-group $\tuple{\Z, \min, \max, +,-, 0}$. A fundamental result of Holland states that every $\ell$-group embeds into $Aut(\tuple{C,\preceq})$, for some totally ordered set $\tuple{C,\preceq}$~\cite{Hol63}.

For any class $\cls{K}$ of $\ell$-groups and set of $\ell$-group equations $\Si\cup\{s\eq t\}$, let $\Si\mdl{\cls{K}}s\eq t$ denote that for any homomorphism $h$ from the algebra of $\ell$-group terms to some $L\in\cls{K}$, if $h(s')=h(t')$ for all $s'\eq t'\in\Si$, then $h(s)=h(t)$. 

The Word Problem for an $\ell$-group with presentation $\tuple{X\mid R}$ is decidable if and only if there exists an algorithm to decide if $\{r\approx e\mid r\in R\}\models_\cls{LG} s\eq t$ for any $\ell$-group equation $s\eq t$ (see, e.g.,~\cite{MPT23}). Let us note also that deciding such consequences in $\cls{LG}$ can be effectively reduced to deciding in $\cls{LG}$ finitely many consequences of the form $\{r\approx e: r\in R\}\models_\cls{LG}e\leq t_1\vee \dots \vee t_n$, where  $t_1,\dots,t_n$ are group terms.

For a left-orderable group with a fixed presentation, the existence of a left-order containing a given finite set of elements corresponds to a consequence in $\cls{LG}$.

\begin{Theorem}[{\cite[Theorem~3]{CM19}}]\label{t:CM19a}
Let $G$ be a left-orderable group with presentation $\tuple{X\mid R}$. For any elements $[t_1],\dots,[t_n]$ in~$G$ corresponding to elements $t_1,\dots,t_n$ of the free group over $X$, the following are equivalent:
\begin{enumerate}[leftmargin=8mm, label={\normalfont(\arabic*)}]
\item $\{[t_1],\dots,[t_n]\}$ does not extend to a left-order on $G$;
\item $\{r\approx e: r\in R\}\models_\cls{LG}\e\leq t_1\vee \dots \vee t_n$.
\end{enumerate}
\end{Theorem}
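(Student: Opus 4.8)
The plan is to prove each of the two implications by contraposition. Throughout I work with $\ell$-groups of the form $Aut(\tuple{C,\preceq})$, whose order, and hence whose joins, are computed pointwise; such a group will be built directly from a left-order on $G$ in one direction and supplied by Holland's embedding theorem in the other.

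\emph{Contrapositive of $(2)\Rightarrow(1)$.} Suppose $\{[t_1],\dots,[t_n]\}$ extends to a left-order $\le$ on $G$ with positive cone $P$, and let $\preceq$ be the reverse of $\le$ on the set $G$. I would take the left-regular representation $\rho\colon G\to Aut(\tuple{G,\preceq})$, $\rho(g)(x):=gx$; since $\preceq$ is left-invariant, each $\rho(g)$ is an order-automorphism of $\tuple{G,\preceq}$, so $\rho$ is a homomorphism into the $\ell$-group $Aut(\tuple{G,\preceq})$. Precomposing with the canonical map $X\to G$ and extending to $\ell$-group terms yields a homomorphism $h$ with $h(r)=\rho([r])=\rho(\e_G)=\mathrm{id}=h(e)$ for all $r\in R$, and $h(t_i)=\rho([t_i])$. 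Evaluating at the point $\e_G\in G$: since $[t_i]\in P$ we have $\rho([t_i])(\e_G)=[t_i]\prec\e_G$, whence, as $\preceq$ is total, $\bigl(h(t_1)\vee\dots\vee h(t_n)\bigr)(\e_G)=\max_{\preceq}\{[t_1],\dots,[t_n]\}\prec\e_G=h(e)(\e_G)$. Thus $h(e)\not\le h(t_1)\vee\dots\vee h(t_n)$, so $\{r\approx e:r\in R\}\not\models_\cls{LG}\e\le t_1\vee\dots\vee t_n$. The only subtle point is to reverse the order, so that elements of $P$ move the base point strictly \emph{downward}.

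\emph{Contrapositive of $(1)\Rightarrow(2)$.} Suppose $\{r\approx e:r\in R\}\not\models_\cls{LG}\e\le t_1\vee\dots\vee t_n$, witnessed by a homomorphism $h$ into an $\ell$-group which, by Holland's theorem, we may take to be $Aut(\tuple{C,\preceq})$. Since each relator is sent to $\mathrm{id}$, the restriction of $h$ to group terms factors through a homomorphism $\psi\colon G\to Aut(\tuple{C,\preceq})$ with $\psi([t_i])=h(t_i)$, and the failure of the inequality yields a point $c_0\in C$ with $\psi([t_i])(c_0)\prec c_0$ for every $i$. I would then consider the subgroup $H:=\{g\in G:\psi(g)(c_0)=c_0\}$ and the set $P:=\{g\in G:\psi(g)(c_0)\prec c_0\}$, and check — using only that each $\psi(g)$ is an order-automorphism — that $P$ is a subsemigroup with $\e_G\notin P$, that $[t_i]\in P$ for every $i$, that $HP=PH=P$, and that $G=P\sqcup P^{-1}\sqcup H$. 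As $H$ is a subgroup of the left-orderable group $G$ it carries a left-order; fixing a positive cone $Q$ of one, these four properties give that $P\cup Q$ is a subsemigroup with $\e_G\notin P\cup Q$ and $(P\cup Q)\sqcup(P\cup Q)^{-1}=G\setminus\{\e_G\}$, i.e. a positive cone of a left-order on $G$, which contains $[t_i]$ for all $i$. Hence $\{[t_1],\dots,[t_n]\}$ extends to a left-order on $G$.

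The hard part is that last step of the second implication. For a general left-orderable group, a partial left-order containing $\{[t_1],\dots,[t_n]\}$ need \emph{not} extend to a total left-order — that would exactly say $G$ is fully left-orderable. What rescues the argument is that the partial left-order $P$ coming from an orbit map is rigid: its point-stabilizer $H$ absorbs $P$ on both sides, and $G$ splits as $P\sqcup P^{-1}\sqcup H$, so one can splice in an arbitrary left-order of the subgroup $H$ to totalize it. Isolating and verifying these structural properties of $P$ and $H$ is the real content of the theorem; the rest — the Holland reduction and the pointwise description of $\vee$ in $Aut(\tuple{C,\preceq})$ — is routine.
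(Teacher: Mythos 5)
Your proof is correct: the left-regular representation on the \emph{reversed} order (for one direction) and the Holland embedding followed by the stabilizer-splicing argument (for the other) are exactly the standard mechanism, and this is essentially how the result is established in the cited source \cite{CM19}; the paper itself only quotes the theorem without proof. The structural point you isolate and verify — that the partial positive cone $P$ arising from an orbit map satisfies $HP=PH=P$ and $G=P\sqcup P^{-1}\sqcup H$ for the stabilizer $H$, so that an arbitrary left-order on the (left-orderable) subgroup $H$ can be spliced in to totalize it — is indeed the crux, and your treatment of it is sound.
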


\noindent
Combining this result with Proposition~\ref{p:fully-orderable-identity} yields the following relationship between the Identity Problem for a fully left-orderable group with a fixed presentation and the Word Problem for a corresponding $\ell$-group:

\begin{Corollary}
Let $G$ be a fully left-orderable group with presentation $\tuple{X\mid R}$. 
\begin{enumerate}[leftmargin=8mm, label={\normalfont(\alph*)}]
\item For any elements $[t_1],\dots,[t_n]$ in~$G$ corresponding to elements $t_1,\dots,t_n$ of the free group over $X$,
\begin{align*}
\qquad\quad\e_G\in\sgr{\{[t_1],\dots,[t_n]\}}\iff\{r\approx e: r\in R\}\models_\cls{LG}\e\leq t_1\vee \dots \vee t_n.
\end{align*}
\item The Identity Problem for $G$ is decidable if and only if the Word Problem for the $\ell$-group with presentation $\tuple{X\mid R}$ is decidable.
\end{enumerate}
\end{Corollary}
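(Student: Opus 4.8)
The plan is to obtain both parts by stacking Theorem~\ref{t:CM19a} on top of Proposition~\ref{p:fully-orderable-identity}(a) and then invoking the two observations about $\ell$-group Word Problems recalled at the start of Section~\ref{s:lgroups}. For part~(a), I would fix group terms $t_1,\dots,t_n$ over $X$ and set $S=\{[t_1],\dots,[t_n]\}\subseteq G$. Since $G$ is fully left-orderable it is in particular left-orderable (the empty partial left-order extends to a left-order), so both cited results apply: Proposition~\ref{p:fully-orderable-identity}(a) gives that $\e_G\in\sgr{S}$ if and only if $S$ does not extend to a left-order on $G$, and Theorem~\ref{t:CM19a} gives that this in turn holds if and only if $\{r\approx e:r\in R\}\models_\cls{LG}\e\leq t_1\vee\dots\vee t_n$. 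Composing the two biconditionals is the whole of part~(a); there is no computation, only the check that `fully left-orderable' feeds the hypotheses of both results.

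For part~(b), I would assemble a chain of equivalences. By the two facts recalled at the start of Section~\ref{s:lgroups}, the Word Problem for the $\ell$-group $\tuple{X\mid R}$ is decidable if and only if there is an algorithm deciding $\{r\approx e:r\in R\}\models_\cls{LG}\e\leq t_1\vee\dots\vee t_n$ for all group terms $t_1,\dots,t_n$ over $X$: one direction is trivial, and the other is precisely the effective reduction of an arbitrary $\cls{LG}$-consequence $\{r\approx e:r\in R\}\models_\cls{LG}s\eq t$ to finitely many consequences of this special shape. By part~(a), such an algorithm exists if and only if one can decide, for a finite set $S\subseteq G$ presented by words $t_1,\dots,t_n$ over $X\cup X^{-1}$, whether $\e_G\in\sgr{S}$ --- that is, if and only if the Identity Problem for $G$ is decidable.

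The argument is mostly bookkeeping, and the only point I would treat with a little care is the uniformity of the reductions: that elements of $G$ in an Identity Problem instance are handled as words over the generators (so that the inputs of the Identity Problem and of the consequence problem genuinely coincide), and that the passage from a general $\cls{LG}$-consequence to a finite family of joins over $\e$ is effective in both directions. Both are already built into the set-up of Section~\ref{s:lgroups} (the latter with references given there), so no new work is required; the substantive content was already supplied by Theorem~\ref{t:CM19a} and by the standing hypothesis that $G$ is fully left-orderable.
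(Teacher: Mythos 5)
Your proposal is correct and matches the paper's approach exactly: the paper itself offers no separate proof, introducing the corollary with ``Combining this result with Proposition~\ref{p:fully-orderable-identity} yields...'', i.e.\ part (a) is the composition of the two biconditionals from Proposition~\ref{p:fully-orderable-identity}(a) and Theorem~\ref{t:CM19a}, and part (b) follows via the effective reduction of the $\ell$-group Word Problem to consequences of the form $\e\leq t_1\vee\dots\vee t_n$ recalled at the start of Section~\ref{s:lgroups}. Your attention to the uniformity of the encodings is a reasonable extra check but introduces nothing beyond what the paper's set-up already provides.
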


Since finitely generated torsion-free nilpotent groups are fully left-orderable (Theorem~\ref{t:Rhemtulla}) and have a decidable Identity Problem  (Theorem~\ref{t:main}), we obtain:

\begin{Corollary}\label{c:nilpotent}
Let $\tuple{X\mid R}$ be a finite presentation of a torsion-free nilpotent group. Then the $\ell$-group with finite presentation $\tuple{X\mid R}$ has a decidable Word Problem.
\end{Corollary}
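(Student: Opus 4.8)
The plan is to chain together the results already assembled in the excerpt. The target statement (Corollary~\ref{c:nilpotent}) asserts that for a finite presentation $\tuple{X\mid R}$ of a torsion-free nilpotent group, the $\ell$-group with the same presentation has a decidable Word Problem. The structure of the argument is essentially a two-line deduction: verify the hypotheses of the preceding Corollary (the one relating the Identity Problem for a fully left-orderable group to the Word Problem for its associated $\ell$-group), then invoke Theorem~\ref{t:main}.

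First I would note that by Theorem~\ref{t:Rhemtulla}, every torsion-free nilpotent group --- in particular the group $G$ presented by $\tuple{X\mid R}$ --- is fully left-orderable. Since the presentation is finite, $X$ is finite and $G$ is finitely generated, so Theorem~\ref{t:main}(a) applies: the Identity Problem for $G$ is decidable. Then the immediately preceding Corollary, part (b), tells us that the Identity Problem for a fully left-orderable group with presentation $\tuple{X\mid R}$ is decidable if and only if the Word Problem for the $\ell$-group with presentation $\tuple{X\mid R}$ is decidable. Combining these gives decidability of the Word Problem for the $\ell$-group with presentation $\tuple{X\mid R}$, which is exactly the claim.

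One point that deserves a word of care is the reduction underlying part (b) of that Corollary: deciding the Word Problem for the $\ell$-group amounts (via Theorem~\ref{t:CM19a} and the reduction of $\ell$-group consequences to finitely many consequences of the form $\{r\approx e : r\in R\}\models_\cls{LG}\e\leq t_1\vee\dots\vee t_n$ noted in Section~\ref{s:lgroups}) to deciding, for each finite tuple $[t_1],\dots,[t_n]$ of elements of $G$, whether $\e_G\in\sgr{\{[t_1],\dots,[t_n]\}}$; and this is uniformly decidable because Theorem~\ref{t:main}(a) provides a single algorithm taking an arbitrary finite subset $S$ of $G$ as input. So no issue of uniformity arises. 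Since all the heavy lifting is done by Theorem~\ref{t:main} and the cited order-theoretic results, there is no real obstacle here --- the only thing to check is that the chain of ``if and only if''s is applied in the correct direction, i.e.\ that decidability of the Identity Problem transfers to decidability of the $\ell$-group Word Problem rather than the reverse, which is immediate from the biconditional in the preceding Corollary.
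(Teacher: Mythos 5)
Your proposal matches the paper's own argument: the paper derives Corollary~\ref{c:nilpotent} by combining Theorem~\ref{t:Rhemtulla} (full left-orderability of torsion-free nilpotent groups) and Theorem~\ref{t:main} (decidability of the Identity Problem) with part (b) of the preceding corollary relating the Identity Problem to the $\ell$-group Word Problem. Your additional remark on uniformity is a reasonable point of care but does not change the substance; the argument is correct and essentially identical to the paper's.
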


Let us turn our attention next to {\em o-groups} --- $\ell$-groups with an underlying total order --- and subdirect products of o-groups, referred to as {\em representable} $\ell$-groups. The class of representable $\ell$-groups forms a variety of $\ell$-groups that is axiomatized relative to $\cls{LG}$ by the equation $\e\le x\jn y\iv{x}\iv{y}$ (see, e.g.,~\cite{AF88} for details).

Now let $\Si$ be any set of group equations. We denote by $F_\Si(X)$ the relatively free group over a set $X$ of the variety of groups satisfying $\Si$, and let $\cls{Rep}_\Si$ denote  the variety of representable $\ell$-groups satisfying $\Si$. In the case where $F_\Si(X)$ is bi-orderable, the existence of a bi-order  containing a given finite set of elements corresponds to the validity of an equation in $\cls{Rep}_\Si$.

\begin{Theorem}[{\cite[Theorem~8]{CM19}}]\label{t:bi-orderable}
Let $\Si$ be a set of group equations and let $X$ be a set such that $F_\Si(X)$ is bi-orderable. For any $[t_1],\dots,[t_n]\in F_\Si(X)$ corresponding to  $t_1,\dots,t_n\in F(X)$, the following are equivalent:
\begin{itemize}
\item[\rm (1)] $\{[t_1],\ldots,[t_n]\}$ does not extend to a bi-order on $F_\Si(X)$;
\item[\rm (2)] $\cls{Rep}_\Si\models \e \le t_1\lor\cdots\lor t_n$.
\end{itemize}
\end{Theorem}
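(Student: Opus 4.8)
The plan is to prove the two implications separately, each by contraposition, following the template of the proof of Theorem~\ref{t:CM19a} but with the Holland representation of an arbitrary $\ell$-group inside some $Aut(\tuple{C,\preceq})$ replaced by the subdirect representation of a \emph{representable} $\ell$-group as a subalgebra of a product of o-groups. Write $G=F_\Si(X)$. The basic observation driving both directions is that, for group terms $t_1,\dots,t_n$ and a valuation in an o-group $O$ sending each $t_i$ to $a_i$, totality of $O$ gives $O\models\e\le t_1\jn\cdots\jn t_n$ under this valuation if and only if $a_i\ge\e$ for \emph{some} $i$; equivalently, the inequality \emph{fails} in $O$ precisely when $a_i<\e$ for \emph{all} $i$.

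For $\lnot(1)\Rightarrow\lnot(2)$: suppose $\{[t_1],\dots,[t_n]\}$ extends to a bi-order $P$ on $G$, so $[t_i]\in P$ for all $i$. The dual bi-order, with positive cone the negative cone $\iv{P}$ of $P$, makes $G$ the group reduct of an o-group $O$; since this group reduct is $G=F_\Si(X)$, which satisfies $\Si$, and every o-group is (trivially a subdirect product of o-groups, hence) representable, we have $O\in\cls{Rep}_\Si$. The identity map $G\to O$ is a group homomorphism sending each $[t_i]$ to $[t_i]$, which lies in $P$ and is therefore strictly below $\e$ in $O$; hence $\e\not\le [t_1]\jn\cdots\jn[t_n]$ in $O$, so $\cls{Rep}_\Si\not\models\e\le t_1\jn\cdots\jn t_n$. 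The only subtlety is the sign flip: one must pass to the \emph{dual} order, so that membership in the positive cone becomes negativity --- exactly the manoeuvre used in the proof of Theorem~\ref{t:CM19a}.

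For $\lnot(2)\Rightarrow\lnot(1)$: suppose $\cls{Rep}_\Si\not\models\e\le t_1\jn\cdots\jn t_n$, witnessed by some $L\in\cls{Rep}_\Si$ and a valuation of $X$ in $L$. Since the group reduct of $L$ satisfies $\Si$, this valuation factors through a group homomorphism $\phi\colon G\to L$ with $\e\not\le\phi([t_1])\jn\cdots\jn\phi([t_n])$. Embed $L$ into a product $\prod_{j\in J}O_j$ of o-groups; as the order on the product is coordinatewise, the inequality must fail in some coordinate $j$, and writing $\psi=\pi_j\circ\phi\colon G\to O_j$ the basic observation gives $\psi([t_i])<\e$ in $O_j$ for all $i$ (we do not need $O_j$ itself to satisfy $\Si$). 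Then
\[
P_0:=\iv{\psi}\bigl(\{a\in O_j:a<\e\}\bigr),
\]
being the preimage of the negative cone of $O_j$ under a group homomorphism, is a normal subsemigroup of $G$ omitting $\e$ and containing every $[t_i]$; thus $\{[t_1],\dots,[t_n]\}$ already extends to a \emph{partial} bi-order.

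The remaining step --- and the one requiring real care --- is to upgrade $P_0$ to a \emph{total} bi-order on $G$. This is where the hypothesis that $G=F_\Si(X)$ is bi-orderable is used: it is not assumed \emph{fully} bi-orderable, so $P_0$ cannot simply be extended abstractly. Instead, fix a bi-order $Q$ on $G$ and set
\[
P:=P_0\ \cup\ \bigl(\ker\psi\cap Q\bigr).
\]
A routine case analysis --- according to whether the $\psi$-images of the elements involved lie below, above, or at $\e$, and using that $\ker\psi$ is normal in $G$, that conjugates of negative elements in an o-group are negative, and that $Q$ is a conjugation-invariant, total subsemigroup of $G$ omitting $\e$ --- shows that $P$ is a normal subsemigroup of $G$ with $\e\notin P$ and $G=P\cup\iv{P}\cup\{\e\}$, i.e., $P$ is the positive cone of a bi-order on $G$. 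Since $[t_i]\in P_0\subseteq P$ for all $i$, the set $\{[t_1],\dots,[t_n]\}$ extends to a bi-order, giving $\lnot(2)\Rightarrow\lnot(1)$. Apart from this last lexicographic patching, the argument is a line-by-line transcription of the left-order proof with o-groups replacing $Aut(\tuple{C,\preceq})$; the main obstacle --- and essentially the only nontrivial point --- is verifying that the patched cone $P$ is total.
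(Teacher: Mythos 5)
The paper does not actually prove this statement --- it is imported verbatim from \cite[Theorem~8]{CM19} --- so there is no in-paper proof to compare against; your argument is a correct reconstruction of the proof along the lines of the cited source. Both directions check out, including the lexicographic patching $P=P_0\cup(\ker\psi\cap Q)$, which is precisely the step where the hypothesis that $F_\Si(X)$ is bi-orderable (rather than fully bi-orderable) is needed and used correctly.
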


\noindent
Combining this result with Proposition~\ref{p:fully-orderable-identity} yields the following:

\begin{Corollary}\label{c:normalidentity}
Let $\Si$ be a set of group equations.
\begin{enumerate}[leftmargin=8mm, label={\normalfont(\alph*)}]
\item For any set $X$ such that $F_\Si(X)$ is fully bi-orderable and elements $[t_1],\dots,[t_n]$ in $F_\Si(X)$ corresponding to  $t_1,\dots,t_n\in F(X)$,
\begin{align*}
\e\in\nsgr{\{[t_1],\dots,[t_n]\}}\iff\cls{Rep}_\Si\models \e \le t_1\lor\cdots\lor t_n.
\end{align*}
\item If $F_\Si(\omega)$ is fully bi-orderable, then the Normal Identity Problem for $F_\Si(\omega)$ is decidable if and only if $\cls{Rep}_\Si$ has a decidable equational theory.
\end{enumerate}
\end{Corollary}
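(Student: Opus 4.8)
The plan is to derive both parts by directly combining the two results just cited: Theorem~\ref{t:bi-orderable}, which translates the non-extension of a finite set to a bi-order on $F_\Si(X)$ into a valid equation of $\cls{Rep}_\Si$, and Proposition~\ref{p:fully-orderable-identity}(b), which says that for a fully bi-orderable group the Normal Identity and Bi-Order Extension Problems are complementary. For part~(a), fix a set $X$ such that $F_\Si(X)$ is fully bi-orderable, and elements $[t_1],\dots,[t_n]$ of $F_\Si(X)$ with representatives $t_1,\dots,t_n\in F(X)$. First I would observe that since $F_\Si(X)$ is in particular bi-orderable, Theorem~\ref{t:bi-orderable} applies and gives that $\{[t_1],\dots,[t_n]\}$ does not extend to a bi-order on $F_\Si(X)$ if and only if $\cls{Rep}_\Si\models \e\le t_1\lor\cdots\lor t_n$. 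Next, since $F_\Si(X)$ is fully bi-orderable, Proposition~\ref{p:fully-orderable-identity}(b) tells us that $\{[t_1],\dots,[t_n]\}$ extends to a bi-order on $F_\Si(X)$ precisely when $\e\notin\nsgr{\{[t_1],\dots,[t_n]\}}$. Contraposing and chaining these two equivalences yields $\e\in\nsgr{\{[t_1],\dots,[t_n]\}}\iff\cls{Rep}_\Si\models\e\le t_1\lor\cdots\lor t_n$, which is the claimed biconditional.

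For part~(b), I would take $X=\omega$ and assume $F_\Si(\omega)$ is fully bi-orderable. The Normal Identity Problem for $F_\Si(\omega)$ takes as input a finite list of elements of $F_\Si(\omega)$, which (choosing representatives) we may present as group terms $t_1,\dots,t_n$ over finitely many variables, hence over $\omega$; the algorithm must decide whether $\e\in\nsgr{\{[t_1],\dots,[t_n]\}}$. By part~(a) this holds if and only if $\cls{Rep}_\Si\models\e\le t_1\lor\cdots\lor t_n$, an instance of the equational theory of $\cls{Rep}_\Si$. Conversely, as noted in the paragraph preceding Theorem~\ref{t:CM19a} (and using that $\cls{Rep}_\Si$ is a variety), deciding an arbitrary $\ell$-group equation over $\cls{Rep}_\Si$ reduces effectively to deciding finitely many consequences of the form $\e\le t_1\lor\cdots\lor t_n$ with $t_i$ group terms; each such instance is, by part~(a), equivalent to a Normal Identity Problem query in $F_\Si(\omega)$. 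Thus a decision procedure for either problem yields one for the other, establishing the stated equivalence of decidability.

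The reductions are routine once the two cited theorems are in hand, so there is no serious obstacle; the only points requiring a little care are bookkeeping ones. One is that in part~(b) the Normal Identity Problem is phrased for a fixed group $F_\Si(\omega)$ on countably many generators, so one must check that an input finite tuple of elements only involves finitely many generators and hence is legitimately encoded by group terms over a finite alphabet --- this is immediate. The other is the reduction in the converse direction of part~(b): one should invoke the standard normal-form argument (rewriting an arbitrary $\ell$-group equation $s\eq t$ over a variety into conjunctions of inequalities $\e\le$ (lattice term), and then, using the lattice distributive law together with the fact that in an $\ell$-group every element is a quotient of joins of group terms, into inequalities of the form $\e\le t_1\lor\cdots\lor t_n$ with the $t_i$ group terms), which is exactly what the remark before Theorem~\ref{t:CM19a} records; no new content is needed.
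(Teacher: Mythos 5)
Your proposal is correct and follows exactly the route the paper intends: part (a) is the direct combination of Theorem~\ref{t:bi-orderable} with Proposition~\ref{p:fully-orderable-identity}(b), and part (b) follows by the standard reduction of arbitrary equations in a variety of $\ell$-groups to consequences of the form $\e\le t_1\lor\cdots\lor t_n$ with group terms $t_i$, as recorded before Theorem~\ref{t:CM19a}. The paper leaves these details implicit ("Combining this result with Proposition~\ref{p:fully-orderable-identity} yields the following"), so your write-up is a faithful and complete expansion of the same argument.
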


We apply these results again in the setting of nilpotent groups, where $\Si$ is taken to be a set of group equations that defines the variety of nilpotent groups of class~$c$. An $\ell$-group is said to be {\em nilpotent of class~$c$} if its group reduct is nilpotent of class~$c$, and we let $\cls{NLG}_c$ denote the variety of all such $\ell$-groups. We make use of the following result:

\begin{Theorem}[Koptytov~\cite{Kop82}]\label{t:kopytov}
Any nilpotent $\ell$-group of class~$c$ that is finitely presented in $\cls{NLG}_c$ has a decidable Word Problem.
\end{Theorem}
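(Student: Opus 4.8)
The plan is to reduce the Word Problem for a finitely presented $\ell$-group in $\cls{NLG}_c$, via $\ell$-group normal forms and representability, to the question of extending a finite set to a bi-order on a finitely generated torsion-free nilpotent group, and then to settle that question through the lower central series together with a convex-geometry feasibility test. For the first reduction I would use three facts: every $\ell$-group term is equivalent to a finite meet of finite joins of group words (by distributivity of the lattice and since left and right multiplications and inversion act as lattice (anti)automorphisms); an inequality $\e \le \bigwedge_j w_j$ holds in every $\ell$-group iff each $\e \le w_j$ holds; and --- the one substantive input --- every nilpotent $\ell$-group is representable, so $\cls{NLG}_c$ is generated as a variety by its bi-ordered members. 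Combining these, deciding whether an $\ell$-group word equals $\e$ in $L = \tuple{X \mid R}$ reduces to finitely many instances of deciding whether $\Gamma \models_{\cls{NLG}_c} \e \le t_1 \jn \cdots \jn t_n$, where $\Gamma$ is a finite set of inequalities of the same shape and all the group words involved come from the free group on $X$; equivalently, whether every bi-ordered nilpotent group of class $\le c$ that is generated by the images of $X$ and satisfies $\Gamma$ has some $t_i$ in its positive cone.

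Such an instance fails exactly when there is a bi-order on some nilpotent-of-class-$\le c$ quotient of the free nilpotent group $F$ of class $c$ on $X$ --- itself finitely generated and torsion-free, free nilpotent groups being torsion-free --- satisfying $\Gamma$ with $t_1, \dots, t_n$ all strictly negative. To remove the dependence on the quotient I would guess, for each inequality of $\Gamma$, which disjunct is realised and whether it is realised strictly positively or trivially in the quotient (finitely many possibilities; triviality is checkable via the solvable Word Problem for the finitely generated nilpotent group $F$), form the largest torsion-free quotient $F'$ of $F$ that kills the chosen trivial disjuncts (a computable finitely generated torsion-free nilpotent group), and invoke Malcev's theorem that $F'$ is fully bi-orderable to pull a bi-order on any further quotient back to a partial bi-order on $F'$ and then extend it. The failure condition then becomes: a specified finite list $c_1, \dots, c_r$ of nontrivial elements of $F'$ extends to a partial bi-order on $F'$, i.e.\ $\e \notin \nsgr{\{c_1, \dots, c_r\}}$ --- the complement of the Normal Identity Problem for $F'$, which is the very link that, run in reverse, yields Corollary~\ref{c:normal}.

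For the last step I would work through the lower central series $F' = \gamma_1 \ge \gamma_2 \ge \cdots \ge \gamma_{c+1} = 1$, whose successive quotients are free abelian of computable finite rank and in which the images of the $c_i$ are computable. One direction is easy: \emph{any} choice of a total group order on each section $\gamma_m/\gamma_{m+1}$ assembles into a bi-order of $F'$ by comparing leading terms (the lowest nontrivial lower-central-series component), left- and right-invariance and normality holding automatically because conjugation fixes leading terms; since the sections are free abelian, producing such a choice that makes every $c_i$ positive is, one level at a time, the linear-feasibility problem of whether a finite set of integer vectors lies in the positive cone of some total vector-space order --- equivalently whether the cone they generate is pointed --- which is decidable over $\Q$. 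The converse, that if $\{c_1, \dots, c_r\}$ extends to \emph{any} bi-order then it extends to one of this leading-term form, is what requires genuine work: it amounts to relating products of conjugates of the $c_i$ in $F'$ to non-negative integer combinations of their images in the abelian sections $\gamma_m/\gamma_{m+1}$.

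The main obstacle is precisely this last equivalence --- the convex-geometric characterisation of when a finite set of elements extends to a bi-order on a finitely generated torsion-free nilpotent group --- which carries the real content and is closely related to Proposition~\ref{p:discrete_Chow}. By contrast, the earlier steps are largely bookkeeping, the imported facts being representability of nilpotent $\ell$-groups, Malcev's full bi-orderability theorem, and decidability of the Word Problem for finitely generated nilpotent groups.
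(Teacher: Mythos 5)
The paper does not actually prove this statement: it is quoted verbatim from Kopytov~\cite{Kop82} and used as a black box (indeed, the paper's own logical flow runs in the opposite direction, deducing Corollary~\ref{c:normal} \emph{from} this theorem). So the only question is whether your blind argument stands on its own, and it does not: it has a genuine gap at the decisive step, one you yourself flag. Your front end is fine --- putting $\ell$-group terms into meet-of-joins normal form, reducing the Word Problem to finitely many quasi-equations $\Gamma \models_{\cls{NLG}_c} \e \le t_1 \jn \cdots \jn t_n$, using representability of nilpotent $\ell$-groups to pass to nilpotent o-groups, guessing which disjuncts are trivial, quotienting the free nilpotent group by the isolator of their normal closure, and invoking Malcev to convert the question into ``does a finite subset $\{c_1,\dots,c_r\}$ of a computable finitely generated torsion-free nilpotent group $F'$ extend to a bi-order, i.e.\ is $\e \notin \nsgr{\{c_1,\dots,c_r\}}$?'' All of that is bookkeeping. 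But the theorem's entire content now sits in deciding that last question, and your proposed method --- reduce to the existence of a ``leading-term'' bi-order assembled from total orders on the lower central factors, then run a linear-feasibility test level by level --- rests on the unproven claim that \emph{every} bi-order on $F'$ compatible with $\{c_1,\dots,c_r\}$ can be replaced by one of leading-term type. Equivalently, you need a normal-subsemigroup analogue of Proposition~\ref{p:discrete_Chow}: that failure of the level-by-level cone condition forces $\e \in \nsgr{\{c_1,\dots,c_r\}}$ via an explicit product of conjugates. This is exactly where Kopytov's work lives (it is closely tied to the classical fact that nilpotent o-groups are weakly Abelian, so their convex subgroups form a central system), and you leave it as ``requires genuine work'' without supplying it. Note also that you cannot plug this hole by citing the paper's Corollary~\ref{c:normal}, since that corollary is itself a consequence of the theorem you are trying to prove; your route would be circular.

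Two smaller repairs would also be needed even after the main gap is filled: the lower central factors $\gamma_m(F')/\gamma_{m+1}(F')$ of a torsion-free nilpotent group need not be torsion-free, so the leading-term construction must use the isolator refinement of the lower central series; and the reduction to independent per-level feasibility tests (each $c_i$ imposing a constraint only at its own leading level) has to be argued, since a priori the choices of order at different levels could interact through elements whose leading terms cancel.
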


In particular, every finitely generated free nilpotent $\ell$-group of class~$c$ is finitely presented in $\cls{NLG}_c$, so the equational theory of $\cls{NLG}_c$ is decidable. Since nilpotent $\ell$-groups are representable (see~{\cite{Kop82}), and free nilpotent groups of class~$c$ are fully bi-orderable, Corollary~\ref{c:normalidentity} and Theorem~\ref{t:kopytov} yield:

\begin{Corollary}\label{c:normal}
The Normal Identity and Bi-Order Extension Problems are decidable for any finitely generated free nilpotent group of class~$c$.
\end{Corollary}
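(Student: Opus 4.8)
The plan is to obtain the result as a direct application of Corollary~\ref{c:normalidentity} together with Kopytov's theorem (Theorem~\ref{t:kopytov}). Fix $c\ge 1$ and let $\Si$ be a set of group equations axiomatizing the variety of nilpotent groups of class~$c$. Then for any set $X$ the relatively free group $F_\Si(X)$ is the free nilpotent group of class~$c$ over $X$; in particular it is torsion-free, hence fully bi-orderable by Malcev's theorem. This places us in a position to invoke Corollary~\ref{c:normalidentity}.

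First I would verify that $\cls{Rep}_\Si=\cls{NLG}_c$. One inclusion is the cited fact that every nilpotent $\ell$-group is representable, so any member of $\cls{NLG}_c$ lies in $\cls{Rep}_\Si$; the converse is immediate, since a representable $\ell$-group whose group reduct satisfies $\Si$ is by definition nilpotent of class~$c$. Consequently the equational theory of $\cls{Rep}_\Si$ coincides with that of $\cls{NLG}_c$.

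Next I would argue that $\cls{NLG}_c$ has a decidable equational theory. The finitely generated free nilpotent $\ell$-group of class~$c$ is finitely presented in $\cls{NLG}_c$ (by the empty set of relations over a finite generating set), so Theorem~\ref{t:kopytov} supplies a decision procedure for its Word Problem. Since an equation in finitely many variables holds in $\cls{NLG}_c$ if and only if it holds in the free $\ell$-group of class~$c$ on those variables, and deciding this reduces to the Word Problem as recalled in Section~\ref{s:lgroups}, the equational theory of $\cls{NLG}_c$ is decidable; combining with the previous paragraph, $\cls{Rep}_\Si$ has a decidable equational theory.

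Finally, applying Corollary~\ref{c:normalidentity}(b) with this $\Si$ yields that the Normal Identity Problem for $F_\Si(\omega)$ is decidable, and decidability of the Bi-Order Extension Problem for $F_\Si(\omega)$ follows as its complement via Proposition~\ref{p:fully-orderable-identity}(b). The statement for a finitely generated free nilpotent group $F_\Si(X)$ with $X$ finite follows in the same way from Corollary~\ref{c:normalidentity}(a), since $F_\Si(X)$ is again fully bi-orderable, or by noting that $F_\Si(X)$ embeds into $F_\Si(\omega)$ compatibly with membership in normal subsemigroups. The only points requiring care are the identification $\cls{Rep}_\Si=\cls{NLG}_c$ and the passage from the Word Problem for free nilpotent $\ell$-groups to the full equational theory of $\cls{NLG}_c$; both are routine given the cited results, so I do not anticipate a serious obstacle.
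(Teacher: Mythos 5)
Your proposal is correct and follows essentially the same route as the paper: identify $\cls{Rep}_\Si$ with $\cls{NLG}_c$ via Kopytov's representability of nilpotent $\ell$-groups, obtain decidability of the equational theory of $\cls{NLG}_c$ from Theorem~\ref{t:kopytov} applied to the finitely generated free nilpotent $\ell$-groups (which are finitely presented by the empty relation set), note that free nilpotent groups of class~$c$ are torsion-free hence fully bi-orderable by Malcev, and conclude via Corollary~\ref{c:normalidentity}. Your explicit use of part~(a) to handle finitely generated $F_\Si(X)$ directly is a sensible clarification of a point the paper leaves implicit.
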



\section{Convex geometry and subsemigroups of nilpotent groups} \label{s:convex_geom}


The proof of Theorem~\ref{t:main} will rely on Proposition~\ref{p:discrete_Chow}, which combines algebraic characterizations of subsemigroups in nilpotent groups with convex geometry. In order to state and prove Proposition~\ref{p:discrete_Chow}, we first give some background on convex geometry and polytopes.

\subsection{Preliminaries}
A \emph{linear form} (or functional) $f\colon \R^d\to \R$ has the form $f(\bx)=\textbf{a} \cdot \bx$, where $\textbf{a}\in \R^d$. A \emph{hyperplane} is the set $\{\bx \in \R^d \mid f(\bx)=b\}$ given by some non-zero linear form $f$ and $b\in \R$.
		
A subset $C\subseteq \R^d$ is \emph{convex} if $\lambda \bx+(1-\lambda)\by\in C$ for all $\bx,\by\in C$ and $\lambda\in [0,1]$. Given a subset $S\subseteq \R^d$, its \emph{convex hull} is the smallest convex set containing $S$. Equivalently, the convex hull is the set of \emph{convex combinations} of elements of $S$:
\[
\Conv(S) := \left\{ \lambda_1\bs_1+ \cdots + \lambda_k\bs_k \;\middle|\; k\in\N, \lambda_i\in\R_{\ge 0} \text{ s.t.\ }\sum_{i=1}^k\lambda_i=1, \bs_i\in S \right\}.
\]	
Interesting examples are polytopes, they can be defined in two equivalent ways: 
	\begin{enumerate}[leftmargin=8mm, label=\arabic*.]
		\item A \emph{convex polytope} is the convex hull of finitely many points in $\R^d$.
		\item A \emph{convex polytope} is a bounded subset of $\R^d$ that can be written as the intersection of finitely many half-spaces $\{\bx\in \R^d \mid f_i(\bx)\le a_i\}$, where $f_i\colon\R^d\to\R$ are non-trivial linear forms and $a_i\in\R$.
	\end{enumerate}
Thus any polytope $P$ can be described in two ways:
\begin{enumerate}[leftmargin=8mm, label=\arabic*.]
	\item A \emph{V-representation} for $P$ is a finite set of points whose convex hull is $P$. 

	\item A \emph{H-representation} for $P$ is a finite set of half-spaces $\{f_i(\bx)\le a_i\}$ whose intersection is $P$. 
\end{enumerate}

It is algorithmically feasible to produce H-representations from V-representations (and vice versa). This is a classical problem, the \emph{Facet Enumeration Problem}, solved in general using Fourier-Motzkin elimination. See for instance~\cite[\S 1.2]{Ziegler}.

A \emph{face} of $P$ is the set of the form $\{\bx\in P\mid f(\bx)=b\}$, where $P$ is included in the half-space $\{\bx\in\R^d:f(\bx)\le b\}$. A \emph{facet} is a face of dimension $d-1$.

\begin{Theorem}[Carath\'eodory, see~{\cite[Prop.\ 1.15]{Ziegler}}] \label{t:CT} 
Fix $S\subseteq \R^d$. For each point $\bx\in \Conv(S)$, there exist $d+1$ elements $\bs_0,\bs_1,\ldots,\bs_d\in S$ such that
\[ \bx\in \Conv(\{\bs_0,\bs_1,\ldots,\bs_d\}). \] 
\end{Theorem}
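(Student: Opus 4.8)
The plan is to argue by descent on the number $k$ of points appearing in a convex representation of $\bx$. First I would fix any expression $\bx = \sum_{i=1}^k \lambda_i \bs_i$ with $\bs_i \in S$, all $\lambda_i > 0$, and $\sum_{i=1}^k \lambda_i = 1$. If $k \le d+1$ there is nothing to prove, since the list of points may simply be padded by repetition; so I would assume $k \ge d+2$ and aim to produce a representation of $\bx$ using strictly fewer points, then iterate.

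The engine of the reduction is linear algebra in $\R^d$. The $k-1$ differences $\bs_2 - \bs_1, \dots, \bs_k - \bs_1$ form a family of $k-1 > d$ vectors in $\R^d$, hence are linearly dependent. Taking a nontrivial dependence among them and setting $\mu_1 := -\sum_{i \ge 2}\mu_i$ yields scalars $\mu_1, \dots, \mu_k$, not all zero, with $\sum_{i=1}^k \mu_i \bs_i = 0$ and $\sum_{i=1}^k \mu_i = 0$. Since the $\mu_i$ sum to $0$ and are not all zero, at least one $\mu_i$ is strictly positive. Then for every $t \in \R$ one still has $\bx = \sum_{i=1}^k(\lambda_i - t\mu_i)\bs_i$ with $\sum_{i=1}^k(\lambda_i - t\mu_i) = 1$. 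Choosing $t := \min\{\lambda_i/\mu_i : \mu_i > 0\}$, which is well defined and strictly positive because each $\lambda_i>0$, makes every coefficient $\lambda_i - t\mu_i$ nonnegative (immediate when $\mu_i \le 0$ since $t \ge 0$, and from $t \le \lambda_i/\mu_i$ when $\mu_i>0$), while $\lambda_j - t\mu_j = 0$ for an index $j$ realizing the minimum. Deleting that term expresses $\bx$ as a convex combination of at most $k-1$ points of $S$, and repeating brings us down to $k \le d+1$, which is the claim.

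I do not expect a serious obstacle — the statement is classical — but the step that needs care is the choice of the parameter $t$: it must be made so that the perturbed coefficients remain in $[0,1]$ and at least one of them is driven to $0$, and this is exactly where the sign constraint coming from $\sum_i \mu_i = 0$ is used (it guarantees a positive $\mu_i$, so the minimum is over a nonempty set and is positive). Everything else is routine bookkeeping with convex combinations.
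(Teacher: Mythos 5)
Your proof is correct. The paper does not prove this statement at all --- it is quoted as a classical result with a citation to Ziegler --- and your argument is precisely the standard one found there: starting from a convex combination $\bx=\sum_{i=1}^k\lambda_i\bs_i$ with $k\ge d+2$, extract an affine dependence $\sum_i\mu_i\bs_i=0$, $\sum_i\mu_i=0$ from the linear dependence of the $k-1>d$ difference vectors, and perturb by $t=\min\{\lambda_i/\mu_i:\mu_i>0\}$ to annihilate one coefficient while keeping the rest nonnegative. The one step that needs care --- that the set $\{i:\mu_i>0\}$ is nonempty so that $t$ is well defined --- you handle correctly via $\sum_i\mu_i=0$ with the $\mu_i$ not all zero. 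Nothing further is needed.
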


\begin{Theorem}[Hahn-Banach Separation Theorem] \label{t:HB} Let $C\subset \R^d$ be a convex set and $\mathbf p\in \R^d\setminus C$. There exists a non-zero linear form $f\colon \R^d\to \R$ such that $f(\bx)\ge f(\mathbf p)$ for all $\bx\in C$.
\end{Theorem}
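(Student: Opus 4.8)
The plan is to obtain this from the classical nearest-point argument for \emph{closed} convex sets, adding a compactness step on the unit sphere to cover the non-closed case.

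Translating by $-\mathbf p$, we may assume $\mathbf p=\mathbf 0$ and seek a non-zero linear form $f$ with $f(\bx)\ge 0$ for all $\bx\in C$; the case $C=\emptyset$ being trivial, assume $C\ne\emptyset$ and write $\overline C$ for its (convex) closure. Since the interior of the closure of a convex set equals the interior of the set, $\mathrm{int}(\overline C)=\mathrm{int}(C)\subseteq C$, so $\mathbf 0\notin\mathrm{int}(\overline C)$; hence $\mathbf 0$ lies either outside $\overline C$ or on its topological boundary.

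First suppose $\mathbf q\notin\overline C$. As $\overline C$ is non-empty, closed and convex, it has a point $\bx_0$ of minimal Euclidean distance to $\mathbf q$ (existence via compactness after intersecting with a large closed ball). Set $f(\bx):=(\bx_0-\mathbf q)\cdot\bx$, which is non-zero because $\bx_0\ne\mathbf q$. For $\bx\in\overline C$ and $\lambda\in(0,1]$, convexity gives $\|\bx_0+\lambda(\bx-\bx_0)-\mathbf q\|^2\ge\|\bx_0-\mathbf q\|^2$; expanding, dividing by $\lambda$ and letting $\lambda\to 0$ yields $(\bx_0-\mathbf q)\cdot(\bx-\bx_0)\ge 0$, hence $f(\bx)\ge f(\mathbf q)+\|\bx_0-\mathbf q\|^2>f(\mathbf q)$. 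Taking $\mathbf q=\mathbf 0$ settles the case $\mathbf 0\notin\overline C$. If instead $\mathbf 0\in\partial\overline C$, choose $\mathbf q_n\to\mathbf 0$ with $\mathbf q_n\notin\overline C$ and let $\mathbf a_n$ be the coefficient vector of the form just constructed for $\mathbf q_n$, rescaled to $\|\mathbf a_n\|=1$, so that $\mathbf a_n\cdot\bx\ge\mathbf a_n\cdot\mathbf q_n$ for all $\bx\in\overline C$. By compactness of the unit sphere pass to a subsequence $\mathbf a_n\to\mathbf a$ with $\|\mathbf a\|=1$, and let $n\to\infty$ to obtain $\mathbf a\cdot\bx\ge 0$ for all $\bx\in\overline C\supseteq C$; then $f(\bx):=\mathbf a\cdot\bx$ is as required.

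The only step needing care is the fact $\mathrm{int}(\overline C)=\mathrm{int}(C)$ for convex $C$, which is what guarantees that a point of $\overline C\setminus C$ is a boundary point, so that the last paragraph applies; it is standard, proved by a line-segment argument when $\mathrm{int}(C)\ne\emptyset$ and trivial otherwise (then $C$, hence $\overline C$, lies in a proper affine subspace, and a defining linear form of that subspace already separates, after possibly recursing in lower dimension). Alternatively, one may simply invoke the finite-dimensional Hahn--Banach separation theorem from a standard text such as~\cite{Ziegler}.
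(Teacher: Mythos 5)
The paper states this separation theorem as a classical fact and offers no proof of its own, so there is nothing to compare against; your argument stands on its own and it is correct. It is the standard finite-dimensional proof: reduce to $\mathbf p=\mathbf 0$, strictly separate from $\overline C$ by the nearest-point projection when $\mathbf 0\notin\overline C$, and otherwise extract a supporting functional at the boundary point $\mathbf 0$ as a limit of unit normals. You also correctly isolate the one genuinely delicate point, namely that weak separation of $C$ follows from weak separation of $\overline C$ only because $\mathbf p\notin C$ forces $\mathbf p\notin\mathrm{int}(\overline C)$, which rests on the convexity fact $\mathrm{int}(\overline C)=\mathrm{int}(C)$ (with the empty-interior case handled by passing to the affine hull). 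For the purposes of this paper the full generality is not even needed: the theorem is only invoked in Proposition~\ref{p:discrete_Chow} with $C$ the interior of a convex hull, i.e.\ an open convex set, where $\mathrm{int}(\overline C)=C$ is immediate; but your proof covers the statement as written.
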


Finally, a result of relevance for later proofs is the following. 

\begin{Lemma} \label{lem:fi_iff_cocompact}
	A subgroup $H\leq \Z^r$, $r\geq 1$, has infinite-index if and only if there exists a non-zero homomorphism $f\colon \Z^r\to \Z$ such that $H\leq\ker(f)$. In particular, this is equivalent to $H$ belonging to a hyperplane.
\end{Lemma}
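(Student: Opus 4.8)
The plan is to prove the equivalence in two directions, treating the ``in particular'' clause as essentially a restatement of the first equivalence. The statement has three conditions: (i) $H$ has infinite index in $\Z^r$; (ii) there is a non-zero homomorphism $f\colon\Z^r\to\Z$ with $H\le\ker(f)$; (iii) $H$ is contained in a hyperplane of $\R^r$. Since a homomorphism $f\colon\Z^r\to\Z$ extends uniquely to a linear form $\R^r\to\R$ whose kernel is a hyperplane, and conversely a rational hyperplane through the origin is the kernel of such an $f$ (after clearing denominators), conditions (ii) and (iii) will be seen to coincide once we check that if $H$ lies in \emph{some} hyperplane then it lies in a rational one. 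So the mathematical content is the equivalence of (i) and (ii).

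For the direction (ii) $\Rightarrow$ (i): if $H\le\ker(f)$ for a non-zero $f\colon\Z^r\to\Z$, then $\ker(f)$ itself has infinite index in $\Z^r$ (the quotient $\Z^r/\ker(f)$ embeds in $\Z$ and is non-trivial since $f\ne 0$, hence is infinite), and therefore its subgroup $H$ has infinite index as well. For the direction (i) $\Rightarrow$ (ii): suppose $H$ has infinite index. By the structure theorem for finitely generated abelian groups, choose a basis $e_1,\dots,e_r$ of $\Z^r$ and integers $d_1,\dots,d_s\ge 1$ (with $s\le r$) such that $d_1e_1,\dots,d_se_s$ is a basis of $H$; here $s$ is the rank of $H$. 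The index $[\Z^r:H]$ is finite precisely when $s=r$ (in which case it equals $d_1\cdots d_r$), so infinite index forces $s<r$, meaning $r-s\ge 1$. Then define $f\colon\Z^r\to\Z$ to be the coordinate projection onto $e_r$ (i.e. $f(a_1e_1+\dots+a_re_r)=a_r$); this is a non-zero homomorphism and it kills $d_1e_1,\dots,d_se_s$ since each of these lies in the span of $e_1,\dots,e_s$, hence kills all of $H$. For the last sentence, note $\ker(f\otimes\R)$ is a hyperplane containing $H$; conversely if $H$ lies in a hyperplane $\{\bx : \mathbf a\cdot\bx = 0\}$ (it must pass through $0$ since $0\in H$), then since $H$ spans a rational subspace one can choose $\mathbf a$ with rational, hence after scaling integer, coordinates, giving the required $f$.

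The main obstacle, such as it is, is the bookkeeping in the last clause: passing from ``$H$ is contained in some real hyperplane'' to ``$H$ is contained in the kernel of an integral linear form.'' The cleanest route avoids this subtlety entirely by observing that we have already shown (i) $\Leftrightarrow$ (ii), and that (ii) $\Rightarrow$ (iii) is immediate (extend $f$ to $\R^r$), so it only remains to note (iii) $\Rightarrow$ (i): if $H\subseteq W$ for a hyperplane $W$, then the real span of $H$ has dimension $\le r-1$, so the rank of $H$ is $\le r-1 < r$, and as noted above this is equivalent to infinite index. This closes the cycle (i) $\Rightarrow$ (ii) $\Rightarrow$ (iii) $\Rightarrow$ (i) without ever needing to rationalize a hyperplane. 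I expect the whole argument to be short, relying only on the structure theorem for finitely generated abelian groups and the elementary fact that $[\Z^r:H]<\infty$ iff $\operatorname{rank}(H)=r$.
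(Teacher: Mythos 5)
Your proof is correct. The right-to-left direction is the same as the paper's (the quotient $\Z^r/\ker(f)$ is infinite, so the subgroup $H$ of $\ker(f)$ has infinite index). For the left-to-right direction you take a slightly different, more hands-on route: the paper simply observes that $\Z^r/H$ is an infinite finitely generated abelian group, hence surjects onto $\Z$, and takes $f$ to be the composite $\Z^r\onto\Z^r/H\onto\Z$; you instead invoke the adapted-basis (Smith normal form) description of $H\le\Z^r$ and take $f$ to be a coordinate projection that vanishes on $H$. Both arguments rest on the structure theorem for finitely generated abelian groups and are equally short; the paper's version avoids choosing a basis, while yours makes the rank criterion $[\Z^r:H]<\infty\iff\operatorname{rank}(H)=r$ explicit, which you then reuse. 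Your treatment of the ``in particular'' clause is actually more careful than the paper's: the paper only notes that the $f$ it constructs puts $H$ in a hyperplane, whereas you close the full cycle by proving that containment in \emph{any} hyperplane forces infinite index (correctly noting that a hyperplane containing the subgroup $H$ must pass through the origin), thereby sidestepping the need to rationalize an arbitrary real hyperplane.
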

\begin{proof}
	For the right-to-left direction, if $H\le\ker(f)$ for some non-zero homomorphism $f\colon\Z^r\to \Z$, then $[\Z^r:H]\geq [\Z^r:\ker(f)]=|\mathrm{Im}(f)|=\infty$.
	
	\smallskip
	
	For the left-to-right direction, if $H$ has infinite-index, then $\Z^r/H$ is an infinite, finitely generated, abelian group, and hence it factors onto $\Z$: the composition $\Z^r\onto\Z^r/H\onto\Z$ provides the desired homomorphism $f$. Thus $f(H)=0$, and so $H$ belongs to the hyperplane determined by $f$ and $b=0$.
\end{proof}

\subsection{Main technical result (Proposition~\ref{p:discrete_Chow})} Given a group $G$, we define the characteristic subgroup
	\[ I_G([G,G]) := \left\{g\in G \;\big|\; \exists n\in\N^+, g^n\in [G,G]\right\}, \]
	 called the \emph{isolator} of $[G,G]$. The quotient $G/I_G([G,G])$ is the largest torsion-free abelian quotient of $G$. If $G$ is finitely generated, then $G/I_G([G,G])\simeq\Z^r$, where $r\geq 0$ is the torsion-free rank of the abelianisation of $G$. To see this, consider the projection of $G$ onto its abelianisation, that is $G\onto G/[G,G]\simeq\Z^r\times T$ with $T$ finite, and notice that $I_G([G,G])$ is the preimage of $T$ under this projection.
	 


\begin{Proposition} \label{p:discrete_Chow}
	Let $G$ be a finitely generated nilpotent group, and consider the map $\pi\colon G\onto G/I_G([G,G])\simeq\Z^r$ with $r \geq 1$. For any set $S\subseteq G$, the following are equivalent:
	\begin{enumerate}[leftmargin=8mm, label={\normalfont(\arabic*)}]
		\item $\Conv(\pi(S))\subseteq \R^r$ contains a ball $B(\mathbf 0,\varepsilon)$ for some $\varepsilon>0$.
		\item For every non-zero linear form $f\colon \Z^r\to\R$, there exists $s\in S$ such that \ $f(\pi(s))<0$. If $S$ is finite, we can restrict to $f\colon \Z^r\to \Z$.
		
		\item For every non-zero homomorphism $\phi\colon G\to \R$, there exists $s\in S$ such that \ $\phi(s)<0$. 
		
		\item The subsemigroup $S^+$ is a finite-index subgroup of $G$.
	\end{enumerate}
\end{Proposition}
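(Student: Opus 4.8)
The plan is to prove the cycle of implications $(1)\Rightarrow(2)\Rightarrow(3)\Rightarrow(4)\Rightarrow(1)$, using the preliminaries on convex geometry together with the order-theoretic input from Section~\ref{s:orders_and_identity} and the structure theory of finitely generated nilpotent groups.

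For $(1)\Rightarrow(2)$: if $\Conv(\pi(S))$ contains $B(\mathbf 0,\varepsilon)$ and $f$ is a non-zero linear form, then since $f$ is non-trivial there is a point $\mathbf p\in B(\mathbf 0,\varepsilon)$ with $f(\mathbf p)<0$; writing $\mathbf p$ as a convex combination $\sum\lambda_i\pi(s_i)$ of images of elements of $S$ forces $f(\pi(s_i))<0$ for at least one $i$. For the refinement when $S$ is finite: $\Conv(\pi(S))$ is then a polytope with rational (indeed integer) vertices, so a linear form witnessing the needed inequality can be perturbed to a rational, hence integer, form. For $(2)\Rightarrow(3)$: given a non-zero homomorphism $\phi\colon G\to\R$, note that $\R$ is torsion-free abelian, so $\phi$ factors through $\pi$ as $\phi=\bar\phi\circ\pi$ for a non-zero linear form $\bar\phi\colon\Z^r\to\R$; applying $(2)$ to $\bar\phi$ produces $s\in S$ with $\phi(s)=\bar\phi(\pi(s))<0$. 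The implication $(3)\Rightarrow(4)$ is where the real content lies. Contrapositively, suppose $S^+$ is \emph{not} a finite-index subgroup of $G$. We want a non-zero homomorphism $\phi\colon G\to\R$ with $\phi(s)\ge 0$ for all $s\in S$. Split into two cases. If $e_G\notin S^+$, then $S^+$ is a partial left-order and Corollary~\ref{crl:easy_out} directly yields such a $\phi$. If $e_G\in S^+$, then $S^+$ is a subsemigroup containing the identity, hence a submonoid, and in fact (using that $G$ is nilpotent, so that the submonoid generated by a finite-order-free... ) one argues $S^+$ is actually a subgroup of $G$ — the point being that in a nilpotent group a submonoid containing $e_G$ and generated by $S$ is a group; then by hypothesis it is an infinite-index subgroup, and one uses $\pi$ together with Lemma~\ref{lem:fi_iff_cocompact} applied to the image $\pi(S^+)\le\Z^r$: either $\pi(S^+)$ has infinite index in $\Z^r$, giving a non-zero $f\colon\Z^r\to\Z$ vanishing on $\pi(S^+)$ and hence a non-zero $\phi=f\circ\pi$ vanishing on $S$; or $\pi(S^+)$ has finite index in $\Z^r$, in which case one must show $S^+$ itself has finite index in $G$ (this uses that the kernel of $\pi$, namely $I_G([G,G])$, is finitely generated nilpotent and one induces on the nilpotency class or on a central series, reducing finite-index-ness of a subgroup to that of its image under $\pi$ plus its intersection with successive central quotients) — contradicting our assumption, so this case cannot occur.

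For $(4)\Rightarrow(1)$: assume $S^+$ is a finite-index subgroup of $G$. Then $\pi(S^+)$ is a finite-index subgroup of $\Z^r$, so $\Conv(\pi(S^+))=\R^r$; moreover $S^+$ contains $e_G$, and every element of $S^+$ is a product of elements of $S$, so $\pi(S^+)\subseteq \sum_{s\in S}\N\,\pi(s)$, the additive submonoid generated by $\pi(S)$. Since this submonoid has $\R^r$ as its convex hull and contains $\mathbf 0$, a standard argument (take $\pm$ combinations along a spanning set of directions drawn from $\pi(S)$, or more precisely: finite index forces $\pi(S)$ to positively span $\R^r$, so $\mathbf 0$ lies in the interior of $\Conv(\pi(S))$ by Carathéodory) shows $\Conv(\pi(S))$ contains a ball around $\mathbf 0$. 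The cleanest route: $\mathbf 0=\pi(e_G)$ is a \emph{positive} combination $\sum \mu_i\pi(s_i)$ with all $\mu_i>0$ and the $\pi(s_i)$ spanning $\R^r$ (such a relation exists because $\pi(S^+)$, being finite-index, is not contained in any half-space through the origin, so $\mathbf 0$ is not on the boundary of the cone it generates), and any point of $\R^r$ that is a positive combination of spanning vectors is interior to their convex hull after rescaling.

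The main obstacle is the case $e_G\in S^+$ inside $(3)\Rightarrow(4)$, specifically the deduction that a finite-index image $\pi(S^+)\le\Z^r$ forces $S^+$ to be finite-index in $G$. This requires using that $\ker\pi=I_G([G,G])$ is itself a finitely generated nilpotent group and running an induction down a central series, at each stage lifting finite-index-ness through the central extension; this is the one genuinely group-theoretic (as opposed to convex-geometric) ingredient and must be set up carefully, presumably via a separate lemma on detecting finite-index subgroups of nilpotent groups through abelianization-type quotients.
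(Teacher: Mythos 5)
Your reductions $(1)\Leftrightarrow(2)\Leftrightarrow(3)$ and $(4)\Rightarrow(1)$ are essentially the paper's (Hahn--Banach plus the bijection $\pi^*$ plus Lemma~\ref{lem:fi_iff_cocompact}), and those parts are fine. The problem is $(3)\Rightarrow(4)$, which is the entire content of the proposition, and your contrapositive argument for it has a fatal gap. You split ``$S^+$ is not a finite-index subgroup'' into the cases $e_G\notin S^+$ and $e_G\in S^+$, and in the second case you assert that a subsemigroup of a nilpotent group containing $e_G$ must be a subgroup. This is false already for $G=\Z^2$: take $S=\{(1,0),(-1,0),(0,1)\}$; then $e_G=(1,0)+(-1,0)\in S^+$, but $(0,-1)\notin S^+$, so $S^+$ is not a subgroup. (The true statement, used elsewhere in the paper, is only that $e_G\in S^+$ iff $S_0^+$ is a subgroup for \emph{some} non-empty $S_0\subseteq S$.) Consequently the case ``$e_G\in S^+$ but $S^+$ is not a subgroup'' is simply not treated, and in that case you still owe a non-zero $\phi$ with $\phi|_S\ge 0$; producing it is essentially as hard as the proposition itself. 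A second, smaller issue: Corollary~\ref{crl:easy_out} is stated for \emph{torsion-free} nilpotent groups (it goes through Rhemtulla and Conrad), whereas the proposition allows torsion; you would need to first pass to the largest torsion-free quotient and check that this loses nothing, which you do not address.

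For comparison, the paper does not argue by contrapositive at all: it proves $(1,2,3)\Rightarrow(4)$ directly by induction on the nilpotency class $c$. The base case (abelian) uses Carath\'eodory's theorem to write $-\pi(s)$ as a positive rational combination of points of $\pi(S)$ lying on a facet of $\Conv(\pi(S))$, clears denominators to get $-\pi(s)\in\pi(S)^+$, and then handles the finite torsion part by raising to the power $|T|$. The induction step shows $S^+\gamma_c(G)$ has finite index via the inductive hypothesis, and then manufactures elements of $S^+\cap\gamma_c(G)$ with prescribed negative value under any $\phi\colon\gamma_c(G)\to\R$ using the central commutator identity $[g^{Mp},h^{Mp}]=[g,h]^{M^2p^2}$, whose quadratic growth in $p$ dominates the linear error terms. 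None of this machinery appears in your proposal, and without it (or a genuinely different substitute for the missing case) the proof does not close.
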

\begin{proof}
	(1) $\Rightarrow$ (2) is trivial, and $\neg$(1) $\Rightarrow$ $\neg$(2) is the Hahn-Banach Theorem (the version stated in Theorem~\ref{t:HB}), taking $C$ to be the interior of $\Conv(\pi(S))$ and $p=\mathbf 0$. Note that if $S$ is finite we can take as $f\colon\Z^r\to\Z$ a linear form defining one of the facets of $\Conv(\pi(S))\subset\Z^r$. The equivalence (2) $\Leftrightarrow$ (3) follows from the fact that the map
	\[ \pi^*\colon \begin{pmatrix}
		\Hom(G/ I_G([G,G]),\R) & \longto & \Hom(G,\R) \\
		f & \longmapsto & f\circ\pi 
	\end{pmatrix}\]
	is a bijection. The map $\pi^*$ is clearly injective; it is also onto, as any homomorphism $\phi\colon G\to \R$ factors through $G/I_G([G,G])\simeq\Z^r$ since $\R$ is torsion-free and abelian. (4) $\Rightarrow$ (2) follows from Lemma~\ref{lem:fi_iff_cocompact} with $H=\pi(S^+)$: we have 
	\[ |\Z^r:H|=|\pi(G):\pi(S^+)|\le |G:S^+| <\infty, \]
	and because $f\colon \Z^r\to\Z$ is non-zero, by Lemma~\ref{lem:fi_iff_cocompact} there must exist $h \in H$ such that $f(h)< 0$. Since $h \in \pi(S)^+$, there exists $s\in S$ such that $f(\pi(s)) < 0$.
	
	It remains to prove (1, 2, 3) $\Rightarrow$ (4). We argue by induction on the nilpotency class $c$ of $G$. Let $P := \Conv(\pi(S))$.
	\medbreak
	\noindent \textbf{Base case.} We start with $c=1$. That is, we may assume that $G\simeq \Z^r\times T$ for some finite abelian group $T$, and let $\pi\colon \Z^r\times T \to \Z^r$. 
	We prove that $-\pi(s)\in \pi(S)^+$ for all $s\in S$, and hence that $\pi(S)^+$ is a subgroup (as $S$ is non-empty).
	
\begin{itemize}[leftmargin=6mm]
	\item We first assume that $S$ is finite, so $P$ is a convex polytope. Consider the ray (or half-line) from $\mathbf 0$ through $-\pi(s)$. This ray intersects some facet $F$ of $P$ at $-x\pi(s)$ for some $x>0$ as $B(\mathbf 0,\varepsilon) \subset P$ for some $\varepsilon>0$ (by (1)). Using Carath{\'e}odory's Theorem (Theorem~\ref{t:CT}), there exist $r$ vertices $\pi(s_1),\ldots,\pi(s_r)$ of $F$ and $y_1,\ldots,y_r\in\R_{\ge 0}$ summing to $1$ such that 
	\[ -x\pi(s) = y_1\pi(s_1) +  \ldots + y_r\pi(s_r). \]
	Moreover, we may assume $x\in\mathbb Q_{>0}$ and $y_i\in\Q_{\ge 0}$, as the coefficients of the underlying system are integers.  We then multiply by some well-chosen positive integer $N$ in order to cancel out denominators, and get
	\[ -\pi(s) = (Nx-1)\cdot \pi(s) + Ny_1\cdot \pi(s_1) + \ldots + Ny_r\cdot \pi(s_r) \in \pi(S)^+. \]
	\item If $S$ is infinite, we can find some finite subset $S_0\subset S$ such that $\mathbf 0$ lies in the interior of $P_0 := \Conv(\pi(S_0))$ and $S_0\ni s$. Now we may repeat the previous argument with $S_0$ and conclude that $-\pi(s)\in \pi(S_0)^+\subseteq \pi(S)^+$.
\end{itemize}
	This proves that $\pi(S)^+$ is a subgroup. The same holds for $S^+$: for each $s\in S$, $s=\pi(s)t$, for some $t\in T$ (we abuse notation and write $s=\pi(s)t$ instead of $s=(\pi(s), e_T)(e_{\Z^r},t)$), and so $s \pi(s^{-1}) \in T$. We can write $\pi(s^{-1}) = v \in \pi(S)^+$ and since $v=w t'$ for some $w\in S^+, t' \in T$, there exists $w\in S^+$ such that $sw\in T$ hence $(sw)^{|T|}=e_G$. This implies $s^{-1}=s^{\abs T -1}w^{\abs T} \in S^+$.
	
	Finally, $\pi(S)^+$ is not included in any hyperplane (condition (2)); hence $\pi(S)^+$ has finite-index in $\Z^r$ by Lemma~\ref{lem:fi_iff_cocompact}, and therefore $S^+$ has finite-index in $G$.
	\bigbreak
	
	\noindent \textbf{Induction step.} Suppose that the induction hypothesis holds for $c-1\ge 1$. We fix $G$ of nilpotency class $c$, that is, the $(c+1)$-th term in the lower central series $\gamma_{c+1}(G)$ is trivial. We also fix a subset $S\subseteq G$ satisfying the equivalent conditions (1, 2, 3). We prove that (i) $S^+\cap \gamma_c(G)$ is a finite-index subgroup in $\gamma_c(G)$, and deduce that (ii) $S^+$ is a finite-index subgroup in $G$.
	
	\medskip
	
	(i) Proof that $S^+\cap\gamma_c(G)$ is a finite-index subgroup in $\gamma_c(G)$.
	
	\smallskip
	
	\noindent Since $\gamma_c(G)$ is finitely generated abelian and $S^+\cap \gamma_c(G)=(S^+\cap \gamma_c(G))^+$, we could use the base case. We verify condition (3): for each non-zero homomorphism $\phi\colon \gamma_c(G)\to \R$, we construct $t\in S^+\cap \gamma_c(G)$ such that $\phi(t)<0$ (Claim 4).
	
	\medskip

	\emph{Claim 1.} $H := S^+\,\gamma_c(G)$ is a finite-index subgroup of $G$.
	
	\smallskip
	
	\noindent For any non-zero homomorphism $g'\colon G/\gamma_c(G)\to\R$, the composition \vspace*{-2mm}
	\begin{center}
		\begin{tikzcd}
			G \arrow[r, two heads, "\tau"] & G/\gamma_c(G) \arrow[r,"g'"] & \R
		\end{tikzcd}
	\end{center}\vspace*{-2mm}
	is a non-zero homomorphism and hence there exists $s\in S$ such that \ $g'(\tau(s))<0$. This means that $\tau(S)\subseteq G/\gamma_c(G)$ satisfies condition (3). However, $G/\gamma_c(G)$ has nilpotency class $c-1$, and so, by the induction hypothesis, $\tau(S)^+$ is a finite-index subgroup of $G/\gamma_c(G)$ and $H$ must be a finite-index subgroup of $G$.

	\medskip
	
	\emph{Claim 2.} For each $g\in G$, we have $g^M\in H$ for $M := [G:H]!$. Equivalently, for each $g\in G$, there exist $w_g\in S^+$ and $z_g\in \gamma_c(G)$ such that $g^M=w_gz_g$.
	
	\smallskip
	
	By the pigeonhole principle, there exist $0\leq i<j\leq [G:H]$ such that $g^iH=g^jH$, and hence $g^{j-i}\in H$. Then $g^M=(g^{j-i})^{\frac M{j-i}}\in H$, which proves Claim 2.
	
	\medskip
	
	\emph{Claim 3.} For each homomorphism $\phi\colon \gamma_c(G)\to\R$, either $\phi\equiv 0$, or there exist $g\in G$ and $h\in \gamma_{c-1}(G)$ such that $\phi([g,h]) < 0$.
	
	\smallskip
	
	\noindent Observe that $\{[g,h] \mid  g\in G, h\in \gamma_{c-1}(G)\}$ is a \emph{symmetric} generating set of $\gamma_c(G)$, and hence either $\phi([g,h])=0$ for all $g,h$ and $\phi\equiv 0$, or the desired $g,h$ exist.
	
	\medskip
	
	\emph{Claim 4.} There exists $t\in S^+\cap \gamma_c(G)$ such that $\phi(t)<0$.
	
	\smallskip
	
	Take $g\in G$ and $h\in\gamma_{c-1}(G)$ as in Claim 3. Since $[g,h] \in \gamma_c(G)\leq\mathcal{Z}(G)$,we have $[g^p,h^q]=[g,h]^{pq}$ for all $p,q\in \Z$. In particular, for any $p\in\Z_{>0}$, we have
	\[ [g,h]^{M^2p^2}= [g^{Mp},h^{Mp}] = w_g^pw_h^pw_{g^{-1}}^pw_{h^{-1}}^p \cdot\, \big(z_gz_hz_{g^{-1}}z_{h^{-1}}\big)^p \] for some $w_g,z_g,w_h,z_h,w_{g^{-1}},z_{g^{-1}},w_{h^{-1}},w_{h^{-1}}$ as in Claim 2. It follows that
	\[ \phi\left( w_g^pw_h^pw_{g^{-1}}^pw_{h^{-1}}^p \right) = M^2p^2 \cdot \phi\big([g,h]\big) - p \cdot \phi \big(z_gz_hz_{g^{-1}}z_{h^{-1}}\big) < 0 \]
	for $p$ large enough. The desired element is $t=w_g^pw_h^pw_{g^{-1}}^pw_{h^{-1}}^p\in S^+\cap\gamma_c(G)$.
	
	
	
	\medskip
	
	(ii) Proof that $S^+$ is a finite-index subgroup of $G$.
	
	\smallskip
	
	First we show that $S^+$ is a subgroup of $G$. For each $s\in S$, there exists $w\in S^+$ such that $sw\in \gamma_c(G)$: this follows from the fact that $s$ must have an inverse $wz$ in $H=S^+\gamma_c(G)$ (with $w\in S^+$ and $z\in\gamma_c(G)$) by Claim 1. So $sw\in S^+\cap \gamma_c(G)$ which is a subgroup by (i), and thus we deduce that $(sw)^{-1}\in S^+ \cap \gamma_c(G)$ which implies $s^{-1}=w(sw)^{-1}\in S^+$.
	
	Finally, we show that $S^+$ has finite-index in $G$: 
	\begin{align*}
		\big[G:S^+\big] & = [G: S^+\gamma_c(G)] \cdot [S^+\gamma_c(G):S^+] \\
		& = \left[G/\gamma_c(G) \,:\, S^+\!/\big(S^+\cap \gamma_c(G)\big)\right] \cdot \big[\gamma_c(G) : S^+\cap\gamma_c(G)\big]<\infty,
	\end{align*}
	using the isomorphism theorem $HK/K\simeq H/H\cap K$ for any $K\le N_G(H)$. 
	This concludes the proof that (1, 2, 3) $\Rightarrow$ (4), and the proof of the proposition.
\end{proof}


\section{The Identity and Subgroup Problems in Nilpotent Groups (Proof of Theorem~\ref{t:main})} \label{s:nilpotent_groups}


In this section, we give an algorithm establishing Theorem~\ref{t:main}, which can be restated as:

\begin{Theorem*} 
	The Identity Problem and the Subgroup Problem, stated below, are uniformly decidable in the class of finitely generated nilpotent groups. 
\medbreak
\noindent The Uniform Identity Problem is the following decision problem:
\begin{itemize}[leftmargin=16mm]
	\item[{\normalfont Input:\hspace*{2.5mm}}] A finite presentation $G=\langle X\mid R\rangle$ and a finite set
$S\subset (X\cup X^{-1})^*$.  
	\item[{\normalfont Output:}]  YES or NO to the question \emph{`Is the identity element $e_G$ in $S^+$?'} 
\end{itemize}
The Uniform Subgroup Problem is the following decision problem:
\begin{itemize}[leftmargin=16mm]
	\item[{\normalfont Input:\hspace*{2.5mm}}] 
	A finite presentation $G=\langle X\mid R\rangle$ and a finite set
$S\subset (X\cup X^{-1})^*$. 
	\item[{\normalfont Output:}] YES or NO to the question \emph{`Is the subsemigroup $S^+$ a subgroup of $G$?'}.
\end{itemize}
\end{Theorem*}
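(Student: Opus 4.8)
The plan is to reduce both the Uniform Identity Problem and the Uniform Subgroup Problem to an effective search combined with the decidable arithmetic of a finitely generated nilpotent group, using Proposition~\ref{p:discrete_Chow} as the central tool. The starting point is that finitely generated nilpotent groups have solvable Word Problem (indeed, one can compute effectively in them via Mal'cev coordinates), so given the presentation $\langle X\mid R\rangle$ we may assume we can decide equality of words, multiply elements, and compute generating sets for terms of the lower central series and for the quotient $\pi\colon G\onto G/I_G([G,G])\simeq\Z^r$. If $r=0$ then $G$ is finite-by-(finite), in fact $G$ itself is finite (torsion nilpotent finitely generated), so both problems are decided by brute force; hence assume $r\ge 1$.

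Next I would set up a semidecision procedure for the positive answer to the Identity Problem running in parallel with one for the negative answer. For the positive side, enumerate all products of elements of $S$ and check (using the Word Problem) whether any equals $e_G$; if $e_G\in S^+$ this halts. For the negative side, the key observation, via Corollary~\ref{crl:easy_out}, is that $e_G\notin S^+$ implies the existence of a non-zero homomorphism $f\colon G\to\R$ with $f(s)\ge 0$ for all $s\in S$; equivalently, by Proposition~\ref{p:discrete_Chow} (contrapositive of (1)$\Leftrightarrow$(3)), $\mathbf 0$ is \emph{not} in the interior of $\Conv(\pi(S))\subseteq\R^r$. Since $S$ is finite and $\pi(S)$ is an explicitly computable finite subset of $\Z^r$, I can compute $\Conv(\pi(S))$ and check effectively whether $\mathbf 0$ lies in its interior. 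If it does not, then by Proposition~\ref{p:discrete_Chow} $S^+$ is not a finite-index subgroup; but more is true — I would then pass to the face of $\Conv(\pi(S))$ "seen" from $\mathbf 0$, i.e.\ restrict to the subset $S'\subseteq S$ of those $s$ lying on a supporting hyperplane $f$ through $\mathbf 0$ with $f\ge 0$ on all of $S$, and recurse inside $\ker(f)\cap G$ (a finitely generated nilpotent group of smaller torsion-free rank $r$), asking whether $e_G\in (S')^+$; the point is that any product of elements of $S$ equal to $e_G$ can only use elements with $f$-value $0$, so $e_G\in S^+$ iff $e_G\in (S')^+$. This recursion on $r$ terminates: the base case $r=0$ inside a finite group is decidable directly.

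For the Subgroup Problem I would use the dichotomy more directly: $S^+$ is a subgroup of $G$ iff it is a subgroup of the finitely generated nilpotent group $\overline{G}:=S^+\cdot S^{-1}\cdots$ — more cleanly, $S^+$ is a subgroup iff $s^{-1}\in S^+$ for every $s\in S$. The positive side is semidecidable (enumerate products). For the negative side, if $S^+$ is a subgroup then it is in particular finitely generated nilpotent, $\pi(S^+)$ is a subgroup of $\Z^r$, and either it has finite index — in which case Proposition~\ref{p:discrete_Chow}(4)$\Leftrightarrow$(1) says $\mathbf 0$ is interior to $\Conv(\pi(S))$, an effectively checkable condition that I can confirm — or it has infinite index, and then by Lemma~\ref{lem:fi_iff_cocompact} it lies in a hyperplane $\ker(f)$, forcing $f(s)=0$ for all $s\in S$ and $f(s^{-1})=0$, so again the problem descends to the smaller-rank group $\ker(f)\cap G$. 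Thus the algorithm is: compute $\Conv(\pi(S))$; if $\mathbf 0$ is in its interior, answer the Subgroup Problem YES by Proposition~\ref{p:discrete_Chow} (and answer the Identity Problem YES) and halt; if $\mathbf 0$ is outside $\Conv(\pi(S))$ entirely, answer the Subgroup Problem NO and the Identity Problem NO; and if $\mathbf 0$ lies on the boundary, recurse into the rank-drop as above. Termination is by induction on $r$, with the base case handled by finiteness.

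The main obstacle, and the point requiring the most care, is the boundary case $\mathbf 0\in\partial\Conv(\pi(S))$ and making the rank-reduction rigorous: I must (a) choose the separating/supporting hyperplane $f$ with rational (hence, after clearing denominators, integer) coefficients so that $\ker(f)\cap G$ is again a finitely presented nilpotent group on which the induction hypothesis applies, (b) verify that $e_G\in S^+$ forces every factor to have $f$-value $0$ — this uses $f\circ\pi\ge 0$ on $S$ together with the fact that a homomorphism to $\R$ cannot have a nontrivial sum of nonnegative terms vanishing unless each term vanishes — and (c) argue that the set $S'=\{s\in S: f(\pi(s))=0\}$ is nonempty and that restricting attention to it loses nothing, and similarly for the Subgroup Problem that $S^+$ is a subgroup of $G$ iff $(S')^+$ is a subgroup of $\ker(f)\cap G$ and $S=S'$. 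All the convex-geometry steps (computing the convex hull, testing membership of $\mathbf 0$ in its interior or boundary, extracting a supporting hyperplane via an H-representation) are classical and effective, as recalled in the Preliminaries; the group-theoretic steps rest only on effective computation in finitely generated nilpotent groups, which is standard.
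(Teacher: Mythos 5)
Your algorithm is essentially the one in the paper: project to the torsion-free abelianisation $\pi\colon G\onto\Z^r$, compute an H-representation of $\Conv(\pi(S))$, and trichotomise on the position of $\mathbf 0$ (interior $\Rightarrow$ YES to both by Proposition~\ref{p:discrete_Chow}; strictly outside $\Rightarrow$ NO to both via a strictly positive functional; on the boundary $\Rightarrow$ restrict to the elements killed by a supporting hyperplane and recurse into its kernel, using exactly the observation of Lemma~\ref{lemma:neg} that factors with positive $f$-value can never be cancelled). The handling of the Subgroup Problem (answer NO as soon as the boundary step discards an element of $S$) also matches the paper.

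There is, however, one concrete error: your termination argument is an induction on the torsion-free abelianisation rank $r$, with the claim that $\ker(f\circ\pi)\cap G$ has ``smaller torsion-free rank $r$.'' This is false in general. For the Heisenberg group $G=\langle x,y\mid [x,[x,y]]=[y,[x,y]]\rangle$ one has $r=2$, and the kernel of the functional reading off the $x$-exponent is $\langle y,[x,y]\rangle\simeq\Z^2$, whose torsion-free abelianisation rank is again $2$. So the quantity you induct on need not decrease at a recursive call, and the induction as stated does not go through. The algorithm does terminate, but the correct measure is the Hirsch length: since $f\circ\pi\colon G\to\Z$ is a non-zero homomorphism, $h(\ker(f\circ\pi))=h(G)-1$, so the recursion depth is at most $h(G)$. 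This is the measure the paper uses, and substituting it repairs your proof without changing anything else. A second, purely practical point you gesture at but should not understate: to recurse you must actually produce a finite presentation of $\ker(f\circ\pi)$ from one of $G$; this is doable for finitely generated nilpotent groups (the paper cites~\cite{subgroup_pres}), but it is a nontrivial ingredient rather than routine bookkeeping.
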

%

The proof of Theorem~\ref{t:main} consists of an algorithm, solving both problems, that relies on the equivalence of parts (1) and (4) of Proposition~\ref{p:discrete_Chow}.
Although decidability of the Subgroup Problem implies decidability of the Identity Problem (the subsemigroup generated by a set $S$ contains the identity if and only if the subsemigroup generated by some non-empty finite subset of $S$ is a subgroup, so we can apply the Subgroup Problem algorithm for all subsets of $S$), we mention both in this section separately, as an alternative way of solving the Identity Problem (but not the Subgroup Problem) is to appeal to Corollary~\ref{crl:easy_out} in step 4(i) of the algorithm, without any mention of Proposition~\ref{p:discrete_Chow} anywhere in the algorithm.

For convenience of exposition, we postpone the proofs of two key facts (Lemmas~\ref{lemma:pos} and~\ref{lemma:neg}) appearing in the proof of Theorem~\ref{t:main}.

\begin{proof}[Proof of Theorem~\ref{t:main}]
We give an algorithm below that solves the Identity and the Subgroup Problems for the finitely presented nilpotent group $G=\langle X\mid R\rangle$. The two problems are solved independently, and the algorithm terminates as soon as it has returned an answer for the problem of interest. 

\begin{enumerate}[leftmargin=8mm, label=(\arabic*)]
\setcounter{enumi}{-1}
\item Set $t:=0$. Let $G_0:=G$, $S_0:=S$, $X_0:=X$ and $R_0:=R$.

\item 

		Compute the torsion-free part of the abelianisation of $G_t$. More precisely, compute the composition $\pi_t$ of the natural projection and a monomorphism
		\[ \pi_t\colon G_t\longto  G_t/I_{G_t}([G_t,G_t]) \longinto \Q^{r_t}, \]
		where the image has full rank. Suppose that $G_t=\langle X_t\mid R_t\rangle$. Consider the canonical homomorphism $\tau_t\colon F(X_t) \to \Q^{X_t}$, where $F(X_t)$ is the free group on $X_t$ and $\Q^{X_t}$ is the $\Q$-vector space over $X_t$. The map $\tau_t$ sends each $x \in X_t$ to the vector with $1$ in the $x$th coordinate, and $0$ otherwise. 
		\begin{enumerate}[leftmargin=6mm, label=(\roman*)]
			\item Compute a basis $B_t$ for the subspace $\langle\tau_t(R_t)\rangle$ generated by the set $\tau_t(R_t)$.
			\item Complete $B_t$ to a basis of $\Q^{X_t}$, adding $x_1,\ldots, x_{r_t}\in X_t$.
			\item Let $\Q^{r_t}:=\bigoplus_{i=1}^{r_t} \Q x_i$, and let $\pi_t$ be the composition of $\tau_t$ with the projection parallel to the subspace $\langle\tau_t(R_t)\rangle$.
		\end{enumerate}

		\item If $r_t=0$ (that is, if $G_t$ is finite) conclude directly:
			\begin{enumerate}[leftmargin=5mm, label=(\roman*)]
				\item If $S_t\ne\emptyset$,
				
				$\quad$ Return YES to both problems.
				
				\item If $S_t=\emptyset$,
				
				$\quad$ Return NO to both problems.
			\end{enumerate}
		\item If $r_t \neq 0$ compute $P_t:=\Conv(\pi(S_t))$. 
		
		$P_t$ is given via a $V$-representation. Find the $H$-representation of $P_t$, that is, express $P_t$ as the intersection of half-spaces $f_{i,t}(\mathbf x)\ge a_{i,t}$ with $f_{i,t}\colon \Q\Z^{r_t}\to \Q\Z$ non-zero linear forms and $a_{i,t}\in \Q\Z$. (See~\cite[\S 1.2]{Ziegler}.)
		
				\item Check the signs of each $a_i$.
		\begin{enumerate}[leftmargin=5mm, label=(\roman*)]
			\item If $a_{i,t}<0=f_{i,t}(\mathbf 0)$ for all $i$, this means that $\mathbf 0$ lies in the interior of $P_t$, that is, there exists $\varepsilon_t>0$ such that $B(\mathbf 0,\varepsilon_t)\subset P_t$. Then $e_G\in S_t^+ \subseteq S^+$ by Proposition~\ref{p:discrete_Chow} (1) (or alternatively, by Corollary~\ref{crl:easy_out}).
			
			\quad Return YES to both problems. 
			
			\item If there exists some $a_{i,t}>0$, then $e_G\notin S_t^+$, by Lemma~\ref{lemma:pos}.
			
			\quad Return NO to both problems.

			\item Otherwise $a_{i,t} \leq 0$ for all $1\leq i\leq r_t$. Let
			\[ G_{t+1}:=\bigcap_{i:a_{i,t}=0} \ker(f_{i,t}\circ\pi_t) \quad \text{ and } \quad S_{t+1} := S_t \cap G_{t+1}. \]
			If $S_{t+1}\subsetneq S_t$,
			
			\quad Return NO to the Subgroup Problem. 
			
			Go to step (1) with new inputs $G_{t+1}=\langle X_{t+1}\mid R_{t+1}\rangle$ and $S_{t+1}$, and increase $t$ by $1$.
			
			Note that a presentation for $G_{t+1}$ can be effectively computed combining the solutions for problems (III) and (IV) of~\cite[p.~5427]{subgroup_pres}. Moreover, $e_G\in S_t^+$ if and only if $e_G\in S_{t+1}^+$, by Lemma~\ref{lemma:neg}. 
		\end{enumerate}
	\end{enumerate}
	It remains to justify that the algorithm terminates. 
The key observation is that the Hirsch length $h(\cdot)$ decreases with each iteration of (4). (Recall that the \emph{Hirsch length} 
 of a nilpotent group $N$ is the number of infinite cyclic factors in any polycyclic series $\{e\}=N_0 \lhd N_1 \dots \lhd N_n=N$, with $N_{i+1}/N_i$ cyclic, of $N$.) Indeed, if there exists $i$ such that $a_{i,t}=0$ (case 4(iii)), then $G_{t+1}\le \ker(f_{i,t}\circ\pi_t)$ and hence
	\[ h(G_{t+1})\le h(\ker(f_{i,t}\circ \pi_t)) = h(G_t)-h(\Z) = h(G_t)-1. \]
	It follows that the algorithm stops in at most $h(G)$ loops; $t$ never exceeds $h(G)$.
\end{proof}


We now prove the lemmas used in the proof of Theorem~\ref{t:main}.

\begin{Lemma}\label{lemma:pos}
Let $G$ be a finitely generated group, $\pi\colon G\onto G/I_G([G,G])\into\Q^r\simeq\Z^r$ the projection to the torsion-free part of the abelianisation of $G$, and $S\subseteq G$. 

Let $P := \Conv(\pi(S))$ and suppose that $P$ is the intersection of half-spaces $f_i(\mathbf x)\ge a_i$ with $f_i\colon\Q \Z^r\to \Q\Z$ non-zero linear forms and $a_i\in \Q\Z$. 

If there exists some $a_i>0$, then $e_G\notin S^+$.
\end{Lemma}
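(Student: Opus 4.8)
The plan is to exploit the fact that a strictly positive right-hand side $a_i>0$ forces the corresponding linear form $f_i$ to be strictly positive everywhere on $P$, hence in particular on every point of $\pi(S)$, and then to propagate this strict positivity along products. First I would dispose of the trivial case $S=\emptyset$, where $S^+=\emptyset$ and there is nothing to prove. Assuming $S\neq\emptyset$, fix an index $i$ with $a_i>0$ and abbreviate $f:=f_i$, $a:=a_i$. Since $\pi(S)\subseteq P$ and $f(\mathbf x)\ge a$ holds for every $\mathbf x\in P$, every generator $s\in S$ satisfies $f(\pi(s))\ge a>0$.

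The key step is to transfer this bound from the generators to an arbitrary element of the subsemigroup. Take any $w\in S^+$ and write it as a product $w=s_1s_2\cdots s_k$ with $k\ge 1$ and each $s_j\in S$. Because $\pi$ is a group homomorphism into the abelian group $\Z^r\subseteq\Q^r$, it sends products to sums, so $\pi(w)=\pi(s_1)+\cdots+\pi(s_k)$. Applying the linear form $f$ and using $f(\pi(s_j))\ge a$ for each $j$ gives $f(\pi(w))=\sum_{j=1}^{k}f(\pi(s_j))\ge ka\ge a>0$. Since $f(\pi(e_G))=f(\mathbf 0)=0$, this yields $f(\pi(w))\neq f(\pi(e_G))$, hence $\pi(w)\neq\mathbf 0$ and therefore $w\neq e_G$. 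As $w\in S^+$ was arbitrary, $e_G\notin S^+$.

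I do not expect a genuine obstacle here: the statement is essentially a one-line consequence of the linearity of $f$ together with the additivity of $\pi$ on the abelianisation, once one records the observation that "$a_i>0$" is exactly the assertion that $f_i$ is strictly positive on $P\supseteq\pi(S)$. The only points needing a moment's care are purely bookkeeping: checking that $f=f_i$ is legitimately evaluated on $\pi(s)\in\Z^r$ regarded inside $\Q^r=\Q\Z^r$, and recalling that elements of $S^+$ are by definition \emph{nonempty} finite products of elements of $S$, so that at least one strictly positive summand always occurs.
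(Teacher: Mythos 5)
Your proof is correct and follows exactly the paper's argument: the bound $f_i(\pi(s))\ge a_i>0$ on generators is summed over a factorisation of an arbitrary $w\in S^+$ using the additivity of $f_i\circ\pi$, forcing $f_i(\pi(w))\ge \ell a_i>0=f_i(\pi(e_G))$ and hence $w\ne e_G$. The extra remarks on the empty-set case and on products being nonempty are harmless bookkeeping that the paper leaves implicit.
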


\begin{proof}
If $a_i>0$ for some $1\leq i \leq r$, then $f_i(\pi(s)) \geq a_i>0$, so \[ f_i(\pi(s_1s_2\ldots s_\ell))=\sum_{j=1}^\ell f_i(\pi(s_j))\ge \ell a_i>0 = f_i(\pi(e_G)) \]
			for any word $s_1s_2\ldots s_\ell\in S^+$. 
			Thus $e_G\notin S^+$ since $f_i(\pi(e_G))=0$.
\end{proof}

\newcommand{\new}{\mathrm{new}}
\begin{Lemma}\label{lemma:neg}
Let $\Conv(\pi(S))$, $f_i$, and $a_i$ be as in Lemma~\ref{lemma:pos}, with $a_i \leq 0$ for all $1\leq i\leq r$, and let
\[
G_\new := \bigcap_{i:a_i=0} \ker(f_i\circ\pi) \quad \text{ and } \quad S_\new := S \cap G_\new.
\]
The elements of $S\setminus S_\new$ are not invertible in $S^+$. In particular,
\begin{enumerate}[leftmargin=8mm, label={\normalfont(\arabic*)}]
	\item $S^+$ is not a group if $S\setminus S_\new\ne\emptyset$.
	\item $e_G\in S^+$ if and only if $e_G\in S_\new^+$.
\end{enumerate}
\end{Lemma}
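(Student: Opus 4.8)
The plan is to prove the single assertion that every element of $S \setminus S_\new$ fails to be invertible in $S^+$; the two numbered consequences then follow immediately, since a group has every element invertible (giving (1)), and if $e_G = s_1 \cdots s_\ell \in S^+$ then each $s_j$ is invertible in $S^+$ (with inverse $s_{j+1}\cdots s_\ell s_1 \cdots s_{j-1}$), forcing every $s_j \in S_\new$ and hence $e_G \in S_\new^+$, while the reverse inclusion $S_\new^+ \subseteq S^+$ is trivial.

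So fix $s \in S \setminus S_\new$. By definition of $S_\new$ there is some index $i$ with $a_i = 0$ and $f_i(\pi(s)) \neq 0$; since $P = \Conv(\pi(S))$ lies in the half-space $\{f_i(\mathbf x) \ge a_i = 0\}$ and $\pi(s) \in P$, we must in fact have $f_i(\pi(s)) > 0$. Now suppose toward a contradiction that $s$ is invertible in $S^+$, i.e.\ there is a word $w = s_1 s_2 \cdots s_\ell \in S^+$ with $sw = e_G$ in $G$. Applying the homomorphism $f_i \circ \pi$ and using that it is additive on products, we get
\[
0 = f_i(\pi(e_G)) = f_i(\pi(sw)) = f_i(\pi(s)) + \sum_{j=1}^\ell f_i(\pi(s_j)).
\]
Each $s_j \in S$, so $\pi(s_j) \in P$ and thus $f_i(\pi(s_j)) \ge a_i = 0$; hence the sum on the right is at least $f_i(\pi(s)) > 0$, a contradiction. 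Therefore $s$ is not invertible in $S^+$, which is exactly what we wanted.

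There is essentially no obstacle here: the whole argument is the observation that $f_i \circ \pi$ is a group homomorphism $G \to \Q$ that is nonnegative on all of $S$ and strictly positive on $s$, so it certifies that $s$ cannot be cancelled. The only point deserving a word of care is the step $f_i(\pi(s)) \neq 0 \Rightarrow f_i(\pi(s)) > 0$, which uses the hypothesis $a_i \le 0$ together with $a_i = 0$ for the relevant $i$ and the fact that $P$ is cut out by the inequalities $f_i(\mathbf x) \ge a_i$; this is why the lemma is stated under the assumption that all $a_i \le 0$ (the positive case being already handled by Lemma~\ref{lemma:pos}).
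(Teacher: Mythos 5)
Your proof is correct and follows essentially the same route as the paper's: both exhibit an index $i$ with $a_i=0$ and $f_i(\pi(s))>0$ for $s\in S\setminus S_\new$, and use the homomorphism $f_i\circ\pi$, which is nonnegative on all of $S$, to show that no word of $S^+$ containing $s$ can equal $e_G$. The only cosmetic difference is that you route part (2) through invertibility of each letter (where your displayed inverse word is empty in the degenerate case $\ell=1$, though the conclusion still holds since then $s_1=e_G\in S_\new$), whereas the paper concludes directly that any word evaluating to $e_G$ can only use letters of $S_\new$.
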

\begin{proof}
Let $I:=\{i \mid a_i=0\}$ and pick $t\in S\setminus S_\new$. Since $t\notin \bigcap_{i\in I} \ker(f_i\circ\pi)$ there exists $j\in I$ such that $f_j(\pi(t))\neq 0=a_j$. So $f_j(\pi(t))>0$ while $f_j(\pi(s))\ge 0$ for all $s\in S$. In particular, for any word $w\in S^+$ containing $t$,
\[ f_j(\pi(w))\ge f_j(\pi(t))>0=f_j(\pi(e_G)) \]
so that $w\ne e_G$, proving that $t$ is not invertible. This justifies (1) and the ``only if" direction of (2). The other implication in (2) is clear, since $S_\new^+ \subseteq S^+$. \qedhere

\end{proof}

\medbreak

\section{Complexity of the Identity and Subgroups Problems and an alternative approach}\label{sec:complexity}


The complexity of the algorithm for the Identity and Subgroup Problems (cf. Proof of Theorem~\ref{t:main}) is not always polynomial, since in Step (1) the basis for the torsion-free abelianisation of $G_t$, at each step $t$, can be exponential in the size of $X_t$ (see for example~\cite{HavasWagner98}). This leads to the representations of words in $S_t$ being potentially exponential over the presentation of the input group; hence, although our algorithm has a polynomial number of steps, in many of the steps we may deal with operands with very large values. Step (1) is the main culprit for an exponential explosion, as in Step (3) we can find an $H$-representation for $\Conv(\pi(S_t))$ in $O(r_t n f)$ operations, where $n=\abs{S_t}$ is the number of vertices and $f$ is the number of facets~\cite{facet_enumeration}. Moreover, $f=O(n^{\lfloor r_t/2\rfloor})$ by the McMullen Upper Bound Theorem (see~\cite[\S 6.4]{Ziegler}).

We therefore suggest another algorithm, based on Dong's~\cite[Algorithm 1]{DongSODA2024}, which can run in polynomial time if a unitary matrix representation for the largest torsion-free quotient of the nilpotent group is given. We assume that for a fixed $G$, finding a unitary matrix representation is part of the preprocessing, so we treat this as an additive constant (albeit a significant one).

\subsection{Malcev completions and Lie algebras.} We start with some preliminaries on nilpotent groups and Lie algebras (see~\cite[Chapter 6]{NilpotentGpBook} and~\cite{Stewart70} for background).

\medskip

For any torsion-free nilpotent group $N$ we can define the \emph{rational Malcev completion} $N_\Q$, or \emph{$\Q$-Malcev completion of $N$}, as the unique torsion-free nilpotent group satisfying (1) $N$ embeds into $N_\Q$, (2) for all $g \in N$ and $k\geq 0$, there exists a unique $h\in N_\Q$ such that $h^k=g$, and (3) for all $h \in N_\Q$, there exists a $k \geq 0$ such that $h^k \in N$. Furthermore, we can associate to every torsion-free nilpotent group $N$ the corresponding Lie algebra $\mathfrak{n}$ via the bijective map $\log\colon N \longto \nk$. This Lie algebra can be realised explicitly by embedding $N$ into the unitary triangular group $\mathrm{UT}(m,\Q)$ of appropriate dimension $m>0$ and working in the corresponding Lie subalgebra  $\nk$ within the algebra $\mathfrak{u}(m):=\log(\mathrm{UT}(m, \Q))$.

\medskip

For a set $T$ of matrices (in a Lie algebra of nilpotency class $c$), we define 
\begin{align*}
\Lie(T) & := \Vect_\Q\big\{[t_{1},\ldots,t_l]_\nk : t_i\in T,\; l\ge 1 \} \quad\text{and} \\
[T,T]_\nk & := \Vect_\Q\big\{[t_{1},\ldots,t_l]_\nk : t_i\in T,\; l\ge 2 \big\},
\end{align*}
where $[t]_\nk:= t$ and $[t_{1},\ldots,t_l]_\nk := [[t_1, \ldots, t_{l-1}]_\nk, t_l]_\nk$. Note that all brackets of length $l\geq c+1$ are equal to $\textbf{0}$, so both spaces are finitely generated as soon as $T$ is finite. The space $\Lie(T)$ is the subalgebra of $\nk$ generated by $T$, and $[T,T]_\nk$ is the ideal of $\Lie(T)$ generated by commutators $[t_1,t_2]_\nk$. 

\begin{Lemma}[{\cite[Lemma 2.5.2]{Stewart70}}] \label{lem:LieAlg}
	Let $N$ be a torsion-free nilpotent group. We have
	\[ I_N([N,N]) = [N_\Q,N_\Q] \cap N  \quad\text{and}\quad \log[N_\Q,N_\Q] = [\nk,\nk]_{\nk}. \]
	In particular, the following diagram commutes:
	\begin{center}
		\begin{tikzcd}
			N \arrow[r, hook] \arrow[d, two heads] & N_\Q \arrow[r, "\sim", "\log"'] \arrow[d, two heads] & \nk \arrow[d, two heads] \\
			N / I_N[N,N] \arrow[r, hook] & N_\Q/[N_\Q,N_\Q] \arrow[r, "\sim", "\log"'] & \nk/[\nk,\nk]_\nk
		\end{tikzcd}
	\end{center}
\end{Lemma}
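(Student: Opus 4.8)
Both assertions express the compatibility of the Malcev correspondence with canonical subgroup/subalgebra constructions. Recall that $\log\colon N_\Q\longto\nk$ is a bijection intertwining the group multiplication of $N_\Q$ with the Baker--Campbell--Hausdorff (BCH) operation on $\nk$ (a finite expression, since $\nk$ is nilpotent), and that under it radicable subgroups correspond to Lie subalgebras, normal radicable subgroups to ideals, and roots to scalar multiples (see \cite[Ch.~6]{NilpotentGpBook}, \cite{Stewart70}). The plan is: (a) prove the Lie-algebra identity $\log[N_\Q,N_\Q]=[\nk,\nk]_\nk$; (b) deduce the group identity $I_N([N,N])=[N_\Q,N_\Q]\cap N$; (c) read off the commuting diagram.

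For (a), the key step is the claim that $\log\gamma_i(N_\Q)=\gamma_i(\nk)$ for every $i\ge 1$, where $\gamma_1(\nk)=\nk$ and $\gamma_{i+1}(\nk)=[\gamma_i(\nk),\nk]_\nk$; the case $i=2$ is the assertion, since $[\nk,\nk]_\nk=\gamma_2(\nk)$ as $\nk$ is nilpotent. Write $\mathfrak g_i:=\log\gamma_i(N_\Q)$, an ideal of $\nk$ (as $\gamma_i(N_\Q)$ is normal and radicable in $N_\Q$). The inclusion $\mathfrak g_i\subseteq\gamma_i(\nk)$ follows by induction on $i$ from BCH: for $a\in\gamma_i(N_\Q)$ and $b\in N_\Q$, $\log[a,b]=[\log a,\log b]_\nk+(\text{longer iterated brackets in }\log a,\log b)$, and every term lies in $[\gamma_i(\nk),\nk]_\nk=\gamma_{i+1}(\nk)$ once $\log a\in\gamma_i(\nk)$. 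For the reverse inclusion $\gamma_i(\nk)\subseteq\mathfrak g_i$ one argues by \emph{downward} induction on $i$ (starting from $\gamma_{c+1}(\nk)=0$): the space $\gamma_i(\nk)$ is spanned by left-normed brackets $[x_1,\dots,x_i]_\nk$ with $x_j\in\nk$, and applying BCH repeatedly to the iterated group commutator of $\exp(x_1),\dots,\exp(x_i)\in N_\Q$ shows $[x_1,\dots,x_i]_\nk-g\in\gamma_{i+1}(\nk)$ for some $g\in\mathfrak g_i$; since $\gamma_{i+1}(\nk)=\mathfrak g_{i+1}\subseteq\mathfrak g_i$ by the inductive hypothesis, we get $[x_1,\dots,x_i]_\nk\in\mathfrak g_i$, and $\mathfrak g_i$ is a subspace.

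For (b), by (a) we have $N_\Q/[N_\Q,N_\Q]\cong\nk/[\nk,\nk]_\nk$ via $\log$, and the right-hand side is a finite-dimensional $\Q$-vector space (the induced bracket is zero, so BCH reduces to addition there); in particular $N_\Q^{\mathrm{ab}}:=N_\Q/[N_\Q,N_\Q]$ is torsion-free. The map $N^{\mathrm{ab}}\to N_\Q^{\mathrm{ab}}$ induced by $N\into N_\Q$ has image spanning $N_\Q^{\mathrm{ab}}$ over $\Q$ (every element of $N_\Q$ has a power in $N$) and, since $N_\Q^{\mathrm{ab}}$ is torsion-free, kernel equal to the torsion subgroup of $N^{\mathrm{ab}}$. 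Hence $N\to N_\Q^{\mathrm{ab}}$ factors as $N\onto N/I_N([N,N])\cong\Z^r\into\Q^r\cong N_\Q^{\mathrm{ab}}$, so that
\[ [N_\Q,N_\Q]\cap N=\ker\big(N\longto N_\Q^{\mathrm{ab}}\big)=I_N([N,N]). \]
Finally (c): the left square of the diagram commutes and its bottom arrow is well defined and injective precisely by the identity just proved; the right square commutes and its bottom arrow $\overline{\log}$ is a well-defined bijection precisely by (a); naturality of the quotient maps gives the rest.

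The main obstacle is the reverse inclusion $\gamma_i(\nk)\subseteq\log\gamma_i(N_\Q)$ in step (a): BCH recovers the top Lie bracket only modulo higher-degree terms, so the induction must be organised (on decreasing $i$) so that those correction terms have already been shown to lie in $\log\gamma_i(N_\Q)$. This bookkeeping is the technical core; it is carried out in \cite[Lemma 2.5.2]{Stewart70}, which may simply be invoked.
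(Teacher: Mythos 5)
The paper itself gives no proof of this lemma: it is imported verbatim from Stewart \cite[Lemma 2.5.2]{Stewart70}, so there is no internal argument to compare against. Your sketch reconstructs the standard Malcev-correspondence proof, and parts (a) and (c) are essentially sound, modulo the standard facts that the terms $\gamma_i(N_\Q)$ are radicable normal subgroups (so that their logarithms are ideals, in particular subspaces) and that BCH recovers the top Lie bracket of an iterated group commutator modulo longer brackets.

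There is, however, a genuine logical slip in step (b). From the torsion-freeness of $N_\Q^{\mathrm{ab}}$ you may only conclude that the kernel of $N^{\mathrm{ab}}\to N_\Q^{\mathrm{ab}}$ \emph{contains} the torsion subgroup; the reverse inclusion, i.e.\ $N\cap[N_\Q,N_\Q]\subseteq I_N([N,N])$, is precisely the substantive half of the first identity and does not follow from torsion-freeness of the target alone (compare $\Z\to\Q$, $1\mapsto 0$: torsion-free target, kernel strictly larger than the torsion subgroup). To close it you need an extra input, for instance: every homomorphism $\phi\colon N\to\Z$ vanishing on $I_N([N,N])$ extends to a homomorphism $N_\Q\to\Q$ (set $\phi(h):=\tfrac{1}{n}\phi(h^n)$ for $h^n\in N$, well defined since $\Q$ is torsion-free divisible abelian), and the extension kills $[N_\Q,N_\Q]$; hence any $g\in N\setminus I_N([N,N])$, which is detected by some such $\phi$ because $N/I_N([N,N])\simeq\Z^r$, cannot lie in $[N_\Q,N_\Q]$. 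Alternatively one can invoke the fact that $[N_\Q,N_\Q]$ is the radicable closure of $[N,N]$ in $N_\Q$, so that every element of $[N_\Q,N_\Q]\cap N$ has a positive power in $[N,N]$. Either way the gap is routine to fill, and of course the lemma may simply be cited from Stewart, as the paper does.
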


Finally, a \emph{convex cone} or simply \emph{cone} of a vector space over an ordered field or ring $F$ (in our case $F=\mathbb{Z}, \mathbb{Q}, \mathbb{R}$), is a set $C$ such that $aC=C$ and $C+C=C$, for any positive scalar $a \in F$. 
\begin{Example}
Suppose that $V$ is the set of rational solutions to a system of linear homogeneous equations with $k$ variables. Then $V$ is a vector field over $\mathbb{Q}$, and the set $V\cap \mathbb{Z}_{\geq 0}^k$ of the vectors in $V$ with positive integer coefficients is a cone in $V$.
\end{Example}

\subsection{Alternative algorithm for the Identity and Subgroup Problems} \label{sec:DongAlgo}  \ 

\medskip

\noindent For a nilpotent group $G$, let $T$ be the finite subgroup consisting of all torsion elements of $G$ and let $\overline G=G/T$ be the largest torsion-free quotient of $G$.

\medskip

\begin{enumerate}[leftmargin=8mm, label=(\arabic*)]
		\setcounter{enumi}{-1}
		\item Identify $S_0:=S$ with a subset of matrices in the Lie algebra $\mathfrak{g}$ of the $\Q$-Malcev completion $\overline G_\Q$. (That is, although $S_0:=\log(S)$, we drop the $\log$ notation.)
		
		\medskip
		
		\item For $t \geq 0$, suppose that $S_t=\{s_1,s_2,\ldots,s_\ell\} \subset \mathfrak{g}$. 
		
		\smallskip
		
		\noindent Let $C$ be the cone of solutions $\textbf{x}=(x_1, \dots, x_\ell)\in\Q_{\ge 0}^\ell$ to
		\[ x_1s_1+x_2 s_2+\ldots+x_\ell s_\ell = \textbf{0} \mod{[S_t,S_t]_\gk}, \]
		and compute $S_{t+1}:=\{s_i \in S_t \mid \exists\textbf{x}\in C\text{ with }x_i>0\}$.
		
		
		\medskip
		
		\item If $S_{t+1}=S_t$, 
			
			Return YES to the Identity Problem. ($\star$). 
			
		\noindent If $S_{t+1}=\emptyset$, 
		
			Return NO to the Identity Problem.
			
		\noindent Else, go back to step (1) with input $S_{t+1}$. 
	\end{enumerate}
	For the Subgroup Problem, step (2) should be replaced with
	\begin{itemize}[leftmargin=10mm]
		\item[(2')] If $S_1=S_0\ne \emptyset$,
			
			Return YES to the Subgroup Problem ($\star$'). 
			
		\noindent Else,
			
			Return NO to the Subgroup Problem
	\end{itemize}
	\begin{Proposition}\label{p:complexity}
	
	Fix a group $G$ of nilpotency class $c$, with a finite Malcev basis $B$. Let $S$ be a set of elements written as words in Malcev normal form over $B$. Let $n$ be the total bit length of exponents of the words in $S$. The algorithm (\ref{sec:DongAlgo}) solves the Identity and Subgroup Problems in time polynomial in $n$. More precisely, the time complexity is bounded by $O(\abs{S}^{4.5c+2}n^2\log(n))$.
	\end{Proposition}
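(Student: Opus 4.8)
The plan is to analyze the algorithm of Section~\ref{sec:DongAlgo} step by step, bounding the bit-size of all objects manipulated and the number of arithmetic operations, then combining these with the (constant, since $G$ is fixed) cost of working in the Lie algebra $\gk$. First I would establish the correctness of the algorithm, which is where the mathematical content lies: I need to show that the set $S_{t+1}$ computed in Step~(1) is exactly the set of elements of $S_t$ that are invertible inside $S_t^+$ (equivalently, that lie on a face of $\Conv(\pi_t(S_t))$ through the origin corresponding to $a_i=0$), so that the iteration terminates with $S_{t+1}=S_t$ precisely when $S_t^+$ is a subgroup, and $e_G\in S^+$ iff at some stage $S_t^+$ is a nontrivial subgroup. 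This mirrors Lemmas~\ref{lemma:pos} and~\ref{lemma:neg} and the proof of Theorem~\ref{t:main}, transported via $\log$ and Lemma~\ref{lem:LieAlg} from the group $\ov G$ to the Lie algebra $\gk$; the cone $C$ of nonnegative rational solutions to $\sum x_i s_i \equiv \mathbf 0 \bmod [S_t,S_t]_\gk$ plays the role of the ``witnesses of vanishing in the abelianization'' and its support determines $S_{t+1}$. The number of outer iterations is at most the Hirsch length of $G$, hence $O_c(1)$, exactly as in the termination argument for Theorem~\ref{t:main}.

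Next I would bound the arithmetic. Writing $\ell=|S_t|\le|S|$, Step~(1) requires: (a) computing the ideal $[S_t,S_t]_\gk$, i.e.\ spanning the finitely many iterated brackets $[s_{i_1},\dots,s_{i_k}]_\gk$ of length $2\le k\le c$ — there are $O(\ell^c)$ of them, each a matrix-product computation in the fixed algebra $\mathfrak u(m)$, so $O(\ell^c)$ vector operations on vectors of dimension $\dim\gk=O_c(1)$; and (b) solving, for each $i$, a linear feasibility problem over $\Q_{\ge 0}$ in $\ell$ variables to test whether $\{x\in C: x_i>0\}\ne\emptyset$, which is $\ell$ linear programs each of size $O(\ell)\times O_c(1)$. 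The dominant combinatorial factor is therefore $O(\ell^{c+1})$ from part~(a) together with the LP solves; matching the stated exponent $4.5c+2$ requires accounting for the cost of linear algebra / LP over the rationals in terms of the exponent on $\ell$ (a polynomial-time LP solver contributes a further polynomial factor in $\ell$, and Gaussian elimination on an $O(\ell)\times O_c(1)$ system over $\Q$ needs care), so I would cite a standard polynomial bound and track the exponents conservatively to land at $\le \ell^{4.5c+2}$.

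The remaining ingredient is the bit-complexity in $n$. Here I would argue that the Malcev coordinates of each $s\in S$, and hence the entries of the corresponding matrices in $\gk\subset\mathfrak u(m,\Q)$, have bit-length $O(n)$ (the Malcev normal form exponents have total bit length $n$, and $\log$ on $\mathrm{UT}(m,\Q)$ for fixed $m$ is polynomial, in fact introduces only a constant-size denominator and polynomial blow-up), so the initial data in Step~(0) has bit-size $O(n)$. Iterated Lie brackets of length $\le c$ multiply these, producing entries of bit-length $O_c(n)$; the linear systems defining $C$ therefore have $O(n)$-bit coefficients, and by the standard polynomial bounds on the bit-size of solutions to rational linear systems / LPs (Cramer's rule gives determinants of $O_c(1)\times O_c(1)$ — or $O(\ell)\times O_c(1)$ — matrices with $O(n)$-bit entries, hence $\tilde O(n)$-bit outputs), every rational manipulated throughout the (constantly many) iterations stays $\tilde O(n)$-bit. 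Multiplying $\tilde O(n)$-bit integers costs $O(n\log n)$ (or use $O(M(n))$ and simplify), and each outer loop performs $O(\ell^{4.5c+2})$ such operations, giving the claimed $O(|S|^{4.5c+2}n^2\log n)$ after multiplying the $O(\ell^{4.5c+2})$ operation count by the $O(n^2\log n)$ cost of a single $\tilde O(n)$-bit rational arithmetic operation (a squaring appears because one factor of $n$ comes from bit-length of operands and another from the length of the exponent vectors / number of group-word symbols being converted).

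The main obstacle I anticipate is not any single hard idea but pinning down the exponent $4.5c+2$ honestly: one must simultaneously control (i) the number of length-$\le c$ Lie brackets, (ii) the dimension count and rank computations in $\gk$, and (iii) the complexity of the nonnegative linear feasibility tests defining $C$, while making sure the bit-sizes do not compound beyond $\tilde O(n)$ across the $O(h(G))$ iterations. Getting a clean, citable polynomial-time LP / linear-algebra bound whose exponent in $\ell=|S|$ sums with the bracket-count exponent to at most $4.5c+2$ is the delicate bookkeeping step; everything else follows the template of Theorem~\ref{t:main} and Dong's Algorithm~1.
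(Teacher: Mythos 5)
Your plan follows the paper's proof essentially step for step: correctness via Proposition~\ref{p:discrete_Chow} transported to the Lie algebra through $\log$, the reduction to the torsion-free quotient, termination in at most $h(G)$ iterations because $\dim_\Q\Lie(S_{t+1})$ drops, and complexity via linear programming over $\Q$ with $O(n)$-bit entries and fast integer multiplication. For the qualitative claim (polynomial time in $n$ for fixed $G$) this is complete and correct.

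Where you fall short of the stated bound is exactly the point you flag as ``delicate bookkeeping,'' and your sketch of the linear program would not produce the exponent $4.5c+2$. You model the feasibility test for each $s_i$ as a program in $\ell=\abs{S_t}$ variables of size $O(\ell)\times O_c(1)$; but the congruence $\sum_i x_i s_i \equiv \mathbf 0 \bmod [S_t,S_t]_\gk$ is encoded by introducing one auxiliary variable per generating bracket of $[S_t,S_t]_\gk$, of which there are $O(\abs{S}^c)$. The paper's accounting is: $\abs{S}$ systems, each over $O(\abs{S}^c)$ variables with $O(\abs{S})$ constraints and total bit length $O(\abs{S}^c n)$; Karmarkar's algorithm then costs $O(\abs{S}^{4.5c+1}n)$ arithmetic operations on $O(n)$-digit numbers per system, and multiplying by the $\abs{S}$ systems and the $O(n\log n)$ cost of each such operation gives $O(\abs{S}^{4.5c+2}n^2\log n)$. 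So the factor $\abs{S}^{4.5c}$ comes from the LP dimension being $O(\abs{S}^c)$, not from the bracket enumeration plus a small LP as in your count of $O(\ell^{c+1})$; and the $n^2$ comes from one factor of $n$ in Karmarkar's operation count (via the input bit length) and one from the per-operation cost, rather than from the conversion of group words. With that correction your argument matches the paper's.
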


	We make several remarks on complexity before proving the proposition, and emphasize the difference between solving the problems for a fixed group versus solving the problems uniformly, when the group presentation is included in the input.
	\begin{Remark} \label{rem:complexity}
 $\,$

\textbf{1.} We assume certain preprocessing steps in the algorithm have constant time complexity for a given group $G$. For example, we assume that we are given a representation of $\overline{G}$ as unitary matrices, and as such, we do not include finding the matrix representation as part of the  complexity. 


\textbf{2.} Regarding a uniform algorithm for all nilpotent groups, our approach and the alternative one by Shafrir do not provide polynomial algorithms. 

First, the algorithm (\ref{sec:DongAlgo}) runs in polynomial time if we are given a representation of $\overline{G}$ as unitary matrices. It is shown in~\cite[Theorem 3.9]{GulWeiss17} that in the main algorithms known for finding matrix representations, given a Malcev basis for a torsion-free group it is possible to have an exponential explosion, in terms of the size of the Malcev basis, when computing the dimension of the matrix representation for the group.  Polynomial bounds for the dimension of the matrix representation are available once the nilpotency class is fixed, and there is an algorithm finding this representation~\cite[Theorem 5.2]{GulWeiss17}.
	
	
	

		\end{Remark}
	\begin{proof}
	We assume that we are given the embedding of $\overline{G}$ into the unitary triangular group $\mathrm{UT}(m, \Q)$ of appropriate dimension $m>0$ and work within the corresponding Lie algebra $\mathfrak{u}(m)=\log(\mathrm{UT}(m, \Q))$. 
	During the algorithm we work with the elements of $S$ as integer matrices (with entries represented/stored in binary) whose sizes differ from those of the words in $S$: the entry in each matrix is a polynomial in the Malcev form exponents, 
	hence the overall input size (of elements in $S$ as matrices) is only changed by a multiplicative constant compared to the initial input length (because the logarithm of a fixed polynomial $p(x)$ is a constant multiple $\log(x)$).
	
	First, note that the algorithm will terminate in at most $h(G)$ iterations, since either $S_{t+1}=S_t$ and the algorithm terminates, or 
	\[ \dim_\Q\Lie(S_{t+1})<\dim_\Q\Lie(S_t). \]
	
	(i) Correctness.
	
	\smallbreak
	
	Regarding (0), note that we can work in the torsion-free quotient $\overline{G}$ of $G$ en lieu of $G$, as $S^+$ contains $e_G$ (resp.\ is a subgroup of $G$) if and only if $\overline{S}^+$ contains $e_{\bar G}$ (resp.\ is a subgroup of $\overline{G}$) by~\cite[Lemma 3.2.6]{DongThesis} (resp.~\cite[Lemma 3.2.5]{DongThesis}).
	
	
	\medbreak
	
	Regarding (2), the difficult part is marked ($\star$). Recall that Lemma~\ref{lem:LieAlg} gives
	\begin{center}
		\begin{tikzcd}
			\langle S_t\rangle \arrow[r] \arrow[d, swap, two heads, "\pi_t"] & \Lie(S_t) \arrow[d, two heads] \\
			\langle S_t\rangle /I([S_t,S_t]) \arrow[r] & \Lie(S_t)/[S_t,S_t]_\gk
		\end{tikzcd}
	\end{center}
	The condition $S_{t+1}=S_t$ means that, in $\Lie(S_t) / [S_t,S_t]_\gk$, we can write $\textbf{0}$ as a positive linear combination of \emph{all the} $s_i$, hence $\textbf{0}$ sits in the interior of the convex hull $\Conv(\pi_t(S_i))$, and we may apply Proposition~\ref{p:discrete_Chow} and conclude that $S_t^+$ is a subgroup, and $e_G\in S_t^+$. On the other hand, if there exists $s\in S_t\setminus S_{t+1}$, we can conclude that $\pi_t(s)$ is not invertible in $\pi_t(S_t^+)$ (hence $s$ is not invertible in $S_t^+$). It follows that $S_t^+$ is not a group. Moreover, any word evaluating to $e_G$ cannot include any such letter $s$, hence $e_G\in S_t^+$ if and only if $e_G\in S_{t+1}^+$.
	
	\medbreak
	
	For the same reasons, if $S_0=S_1$, then $S^+=S_0^+$ is a subgroup. Otherwise, $S^+=S_0^+$ contains non-invertible elements (any element in $S_0\setminus S_1$), hence $S^+$ cannot be a subgroup. This justifies the step (2') and in particular part ($\star$') .
	
	
	\medbreak
	
	(ii) Complexity.
	As we mention in Remark~\ref{rem:complexity}(1.), we assume that finding a matrix representation is part of the preprocessing, and does not count towards the complexity (other than as a constant).
	\smallbreak
	The only significant step in the algorithm is part (1), which by~\cite[Lemma 3.4]{DongSODA2024} can be performed in polynomial time on the size $n$ of the input (as matrices), as it relies on finding rational solutions to a linear programming problem (see~\cite[Lemma 3.3.4]{DongThesis}). More precisely, we have to solve $\abs S$ systems over $O(\abs S^c)$ variables (for each vector $s\in S_t$ and for each commutator generating $[S_t,S_t]_\gk$), with $O(\abs S)$ linear constraints, and total bit length $O(\abs S^cn)$. Using Karmarkar's algorithm~\cite{Linear_programming}, each system can be solved in $O(\abs S^{4.5c+1}n)$ operations with $O(n)$-digits numbers, yielding a total time of $O(\abs S^{4.5c+2}n^2 \log n)$ \cite{Interger_Multiplication}. 
	\end{proof}


\section{Fixed-target submonoid membership}\label{sec:fixed-target}

	
	As mentioned in the introduction, the decidability of the Identity Problem for every finitely generated nilpotent group stands in contrast to the undecidability of the Submonoid Membership Problem for certain members of this class~\cite{romankov2022undecidability}. In this section, we show that an intermediate problem -- the \emph{Fixed-Target Submonoid Membership Problem} -- is also undecidable for certain finitely generated nilpotent groups. This problem was recently introduced by Gray and Nyberg-Brodda~\cite{gray2024membership}. Our result follows from recent work of Pak and Soukup on Green functions~\cite{pak_soukup}.
	\begin{Proposition} \label{fixed_target}
		There exists a finitely generated nilpotent group $G=\langle X\rangle$ and a fixed element $g_0\in G$ such that the following problem is undecidable:
		\begin{itemize}[leftmargin=16mm]
			\item[{\normalfont Input:\hspace*{2.5mm}}] 
			A finite set $S\subset (X\cup X^{-1})^*$. 
			\item[{\normalfont Output:}] YES or NO to the question \emph{`$g_0\in S^*$?'}.
		\end{itemize}
	\end{Proposition}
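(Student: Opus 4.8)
The plan is to reduce a known undecidable problem in nilpotent groups to the Fixed-Target Submonoid Membership Problem, using the Pak--Soukup analysis of Green functions. Recall that Pak and Soukup~\cite{pak_soukup} establish undecidability results about \emph{counting} or \emph{representability} questions for Green functions of nilpotent groups; concretely, they exhibit a finitely generated nilpotent group $G$ in which one cannot decide, given a finite subset $S$, whether a fixed group element lies in the submonoid generated by $S$, or some closely related membership-type statement. So the first step is to state precisely which result of~\cite{pak_soukup} we invoke: it should provide a fixed finitely generated nilpotent group $G = \langle X \rangle$ together with a fixed target $g_0 \in G$ for which $\{S : g_0 \in S^*\}$ (or its complement) is not recursive.

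Assuming that result is available almost verbatim, the proof is essentially a matter of translation. First I would fix the group $G$ and the element $g_0$ produced by~\cite{pak_soukup}, choosing a finite generating set $X$ and expressing $g_0$ as a fixed word in $(X \cup X^{-1})^*$. Then, given an arbitrary finite input set $S \subset (X \cup X^{-1})^*$, membership $g_0 \in S^*$ is exactly the decision question Pak and Soukup show to be undecidable, so the Fixed-Target Submonoid Membership Problem for this particular $(G, g_0)$ is undecidable. If the statement in~\cite{pak_soukup} is phrased in terms of Green functions rather than submonoid membership directly, the intermediate step is to recall that the Green function $G(x) = \sum_n p_n(x)$ (weighted count of walks ending at $x$) is positive precisely when $x$ lies in the submonoid generated by the step distribution's support; thus a question about whether a Green function value is nonzero, or about its finiteness/rationality in the cases they treat, encodes exactly fixed-target submonoid membership. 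I would spell out this dictionary carefully, since the phrasing gap between "Green function" and "submonoid membership" is where the real content of the citation sits.

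The remaining point, which I would address explicitly for completeness, is that this problem genuinely sits strictly between the Identity Problem and the general Submonoid Membership Problem: the former is the special case $g_0 = e_G$ and "subsemigroup" rather than "submonoid" (decidable by Theorem~\ref{t:main}), and the latter allows $g_0$ to vary with the input. No new argument is needed here — it is a remark on the hypotheses — but it justifies calling the result an "intermediate" undecidability phenomenon.

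\textbf{Expected main obstacle.} The hard part is not any argument of our own but extracting from~\cite{pak_soukup} a statement in exactly the form needed: a \emph{single} fixed nilpotent group and a \emph{single} fixed target element, with the input being only the finite set $S$. Their results may be stated with some of these data as part of the input, or in the language of probability/generating functions rather than monoid membership, so the bookkeeping to pin down "fixed $G$, fixed $g_0$, varying $S$" and to verify that the reduction is computable is where care is required. Once that translation is in hand, the proof is short.
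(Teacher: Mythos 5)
There is a genuine gap here: you have deferred the entire mathematical content to the hope that \cite{pak_soukup} contains a statement ``almost verbatim'' of the required form, and it does not. What the paper actually uses is their Lemma~4.4, which is not a membership statement at all: it produces, from a polynomial $f\in\Z[x_1,\dots,x_k]$, effectively computable matrices $P,Q,A_1,\dots,A_k\in\mathrm{UT}(m,\Z)$ such that the \emph{structured} equation $P\,W_1\,Q\,W_2\,P^{-1}\,W_3\,Q^{-1}\,W_4=e$ has a solution with $W_i$ in the monoid generated by the $A_i$ if and only if $f=0$ has an integer solution. Two ideas are then needed to turn this into the Fixed-Target Submonoid Membership Problem, and your proposal supplies neither. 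First, to get a \emph{fixed} group one fixes $d$ and $k$ for which solvability of degree-$d$ Diophantine equations in $k$ variables is undecidable (MRDP), so that $m=m(d,k)$ is a constant and $G=\mathrm{UT}(m,\Z)\times\mathrm{UT}(3,\Z)$ does not depend on the instance; only $S$ (built from the $A_i$, which depend on $f$) varies. Second, and this is the real content, one needs a gadget forcing an arbitrary word over $S$ that evaluates to the target to have the shape $W_0PW_1QW_2P^{-1}W_3Q^{-1}W_4$ demanded by the lemma. The paper does this by tagging $P,Q,P^{-1},Q^{-1}$ with $x^2,y^2,x^3,y^3$ in a Heisenberg factor $\mathrm{UT}(3,\Z)=\langle x,y\rangle$ and taking $g_0=(e,\,x^2y^2x^3y^3)$: the second coordinate forces each tagged letter to occur exactly once and in the prescribed order. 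Without such a gadget there is no reduction from the structured equation to plain membership $g_0\in S^*$.

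Your proposed ``dictionary'' between Green functions and submonoid membership also does not rescue the argument: the Pak--Soukup results on Green/cogrowth series concern analytic properties (non-$D$-finiteness) of generating functions, not the decidability of whether a fixed element is reached, and the paper itself remarks that the implication between FTSMP undecidability and non-$D$-finiteness is unclear in either direction. In short, the proposal correctly identifies that \cite{pak_soukup} is the right source and that the difficulty is the ``translation,'' but the translation is precisely the proof, and it is missing.
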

	\begin{Lemma}[{\cite[Lemma 4.4]{pak_soukup}}]\label{lem:pak}
		Let $f\in\Z[x_1,\ldots,x_k]$ be a polynomial. There exist a positive integer $m=m(\deg(f),k)$ and effectively computable matrices $P,Q,A_1,\ldots,A_k\in \mathrm{UT}(m,\Z)$ such that the equation
		\begin{equation*} \label{eq:special_zero_word}
			P\cdot W_1\cdot Q\cdot W_2\cdot P^{-1}\cdot W_3\cdot Q^{-1} \cdot W_4 = e
		\end{equation*}
		admits a solution $W_1,W_2,W_3,W_4\in \langle A_i \mid i=1, \ldots, k \rangle$ if and only if the Diophantine equation $f(x_1,\ldots,x_k)=0$ admits an integer solution.
	\end{Lemma}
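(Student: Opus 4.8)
The plan is to realize the polynomial $f$ as a single matrix coordinate of the word $P W_1 Q W_2 P^{-1} W_3 Q^{-1} W_4$, exploiting the fact that in a nilpotent matrix group iterated commutators compute products of exponents. First I would fix a nilpotency class $c=\deg(f)$ and work inside a faithful unitriangular representation of the free nilpotent group of class $c$ on $k+2$ generators (for instance via the Magnus embedding over $\Z$), so that commutator identities translate into matrix identities in $\mathrm{UT}(m,\Z)$ for a suitable $m=m(\deg(f),k)$, and a distinguished super-diagonal entry becomes a polynomial functional on the exponent coordinates. The basic gadget is the Heisenberg identity $[a^x,b^y]=[a,b]^{xy}$: raising generators to integer powers and bracketing them multiplies the corresponding exponents in a deeper central coordinate, and nesting brackets of height $d$ produces a monomial $x_{j_1}\cdots x_{j_d}$ in the class-$c$ part of the group. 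Since that part is central, hence additive, a product of such gadgets adds their contributions, so a sum of monomials becomes a sum of coordinates.

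Next I would construct the data. The matrices $A_1,\ldots,A_k$ are chosen to carry the unknowns $x_1,\ldots,x_k$ as exponents; crucially, since the $(i,j)$ entry of a power $A^x$ of a unitriangular matrix is itself a polynomial in $x$ (of degree $j-i$), each power $A_j^{x_j}$ already records the binomial polynomials $\binom{x_j}{d}$, so a single generator per unknown suffices to realize arbitrary powers $x_j^d$. The fixed matrices $P$ and $Q$ provide the bilinear scaffolding whose conjugation pattern in the word couples these powers into the mixed terms, and by tuning the entries of $A_j,P,Q$ --- read off directly from the coefficients of $f$ --- I would arrange that, once the abelianization and intermediate commutator constraints are met, the top central coordinate of the word equals $f(x_1,\ldots,x_k)$. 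All of this is effective: the class $c$, the generator count $k+2$, and hence the dimension $m$ are computable from $\deg(f)$ and $k$, and the matrix entries are explicit in the coefficients of $f$.

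The equivalence then follows from a filtration argument along the lower central series $\gamma_1\ge\gamma_2\ge\cdots\ge\gamma_c$. For the easy direction, an integer solution $(x_1,\ldots,x_k)$ of $f=0$ determines an assignment of the blocks ($W_1,W_2$ carrying the unknowns, $W_3,W_4$ the forced corrections) for which a direct computation shows the product equals $e$. For the converse, given a solution of the word equation I would read off the exponent vectors of $W_1,\ldots,W_4$ and analyze the identity level by level: the image in the abelianization $\Z^{k+2}$ imposes linear relations that pin down the exponent vectors of $W_3,W_4$ in terms of those of $W_1,W_2$; once these hold the word lies in $\gamma_2$, and descending through the filtration each successive level contributes either a vacuous condition or a further linear (hence freely solvable) condition on the correction data, until the single class-$c$ obstruction remains, and it reads exactly $f(x_1,\ldots,x_k)=0$.

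The main obstacle is precisely this backward direction: one must verify that the equation $=e$ imposes no Diophantine constraint beyond $f=0$, i.e.\ that the analysis up the lower central series is genuinely triangular, with every intermediate coordinate either automatically zero or adjustable by the correction blocks, so that the only nonlinear condition sits at the top and equals $f$. Guaranteeing this requires choosing $P,Q$ and the $A_j$ so that the monomials of $f$ all land in one coordinate while the lower coordinates stay linear in the correction variables; balancing these competing requirements, and doing so with effectively computable integer matrices of controlled size $m(\deg(f),k)$, is the delicate part of the construction.
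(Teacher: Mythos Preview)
The paper does not prove this lemma at all: it is quoted from Pak and Soukup \cite[Lemma 4.4]{pak_soukup} and used as a black box in the proof of Proposition~\ref{fixed_target}. There is therefore nothing in the paper to compare your proposal against.

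As for the proposal on its own terms: the overall strategy --- embed a free nilpotent group into $\mathrm{UT}(m,\Z)$, use iterated commutators to turn exponent tuples into monomials in a deep central coordinate, and sum them there --- is the standard mechanism behind such encodings and is in the spirit of what Pak and Soukup actually do. However, what you have written is a plan, not a proof. The substance of the lemma is precisely the part you flag as ``the main obstacle'': one must exhibit concrete matrices $P,Q,A_1,\ldots,A_k$ and then verify, level by level through the lower central series, that the word equation imposes \emph{exactly} the constraint $f(x_1,\ldots,x_k)=0$ and nothing else. Your sketch asserts that the intermediate levels can be made ``triangular'' and absorbed by the correction blocks $W_3,W_4$, but gives no indication of how to arrange this; in practice this is where all the difficulty lies, and without an explicit construction (or at least an inductive scheme producing one) the argument has a genuine gap. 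If you intend to supply a self-contained proof rather than cite \cite{pak_soukup}, you will need to carry out that construction in full.
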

	\begin{proof}[Proof of Proposition~\ref*{fixed_target}]
		We fix integers $d, k > 0$ such that it is undecidable if a Diophantine equation of degree $d$ over $k$ variables admits an integer solution, and use $m=m(d,k)$ (provided by Lemma~\ref{lem:pak}) as the corresponding matrix dimension as follows. Let $G := \mathrm{UT}(m,\Z)\times \mathrm{UT}(3,\Z)$ with
		\[ \mathrm{UT}(3,\Z)= \langle x,y \;\big|\; [x,[x,y]]=[y,[x,y]]=e \rangle. \]
		For a polynomial $f\in\R[x_1,\ldots,x_k]$ of degree $d$, we consider the following pairs of matrices in $G$ (with $P$, $Q$, $A_i\in\mathrm{UT}(m,\Z)$ given by Lemma~\ref{lem:pak}):
			\[
			\begin{array}{lll}
				\tilde P_+=(P,x^2), & \tilde Q_+=(Q,y^2), & \tilde A_{i,+}=(A_i,e), \\
				\tilde P_-=(P^{-1},x^3), & \tilde Q_-=(Q^{-1},y^3), & \tilde A_{i,-}=(A_i^{-1},e).
			\end{array}
			\]
		We take $g_0=(e,\,x^2y^2x^3y^3)$ and $S=\{\tilde P_\pm,\tilde Q_\pm, \tilde A_{i,\pm}\}$. Looking at the second component, we see that any word over $S$ evaluating to $g_0$ must contain $\tilde P_+$, $\tilde Q_+$, $\tilde P_-$ and $\tilde Q_-$ exactly once, in that order. Therefore $g_0\in S^*$ if and only if there exist $W_0,W_1,W_2,W_3,W_4\in \langle A_i \mid i=1, \ldots, k \rangle$ such that
		\begin{align*}
			W_0\cdot P\cdot W_1\cdot Q\cdot W_2\cdot P^{-1}\cdot W_3\cdot Q^{-1} \cdot W_4 & = e \\
			\iff P\cdot W_1\cdot Q\cdot W_2\cdot P^{-1}\cdot W_3\cdot Q^{-1} \cdot W_4W_0 & = e.
		\end{align*}
		In turn, this is equivalent to the Diophantine equation $f(x_1,\ldots,x_k)=0$ admitting an integer solution, which is undecidable.
	\end{proof}
	
	\begin{Remark}
		We would like to highlight the link between the undecidability of the Fixed-Target Submonoid Membership Problem (FTSMP) in a group and the strategy initiated in~\cite{garrabrant2017words, pak_soukup} to construct generating sets for which the Green series of the group is not $D$-finite. It is not clear that the undecidability of FTSMP implies the lack of $D$-finiteness, but there are similarities in how one approaches both problems. In particular, it would be interesting to construct such generating sets for the right-angled Artin group $A(P_4)$, or for Thompson's group $F$. 
	\end{Remark}


\section{Orientation-preserving affine transformations of $\Q$} \label{s:aff}


One may wonder if Proposition~\ref{p:fully-orderable-identity} applies beyond torsion-free nilpotent groups. We first note that it does not apply to the entire class of torsion-free metabelian groups. Indeed, Rhemtulla shows that the fundamental group of the Klein bottle $\langle a,b  \mid ab=ba^{-1} \rangle\simeq \Z\rtimes_{-1}\Z$ is not fully left-orderable \cite[\S 4]{Rhe72}.  The goal of this section is to show that Proposition~\ref{p:fully-orderable-identity} applies to a family of orientation-preserving affine transformations of $\Q$, which are proved to be fully left-orderable.

\medskip

For each $\lambda\in\Q_{>1}$, we consider the groups $G_\lambda\le\mathrm{Aff}_+(\Q)$, defined as
\[ G_\lambda = \left\{ x\mapsto \lambda^nx+c \;\Big|\; n\in\Z,\, c\in\Z\big[\lambda,\lambda^{-1}\big]\right\} \simeq \Z\big[\lambda,\lambda^{-1}\big]\rtimes_\lambda \Z. \]
Each group $G_\lambda$ is generated by the translation $a\colon x \mapsto x+1$ and the homothety $b\colon x\mapsto \lambda x$. If $\lambda=\frac{p}{q}$ then $G_\lambda$ is a homomorphic (but in general not isomorphic) image of the Baumslag-Solitar group $BS(p,q)=\langle a,b \mid b^{-1}a^pb=a^q \rangle$. It is an isomorphic image for $\lambda=m\in\Z_{\ge 2}$, and so the family $G_\lambda$ notably contains the groups $G_m\simeq BS(1,m)$. 

For a fixed $\lambda\in\Q_{>1}$ let $\pi\colon G_\lambda\to\Z$ denote the projection on the second factor, and let $e=e_{G_{\lambda}}\colon x \mapsto x$. We recall the construction of left-orders on $G_\lambda$. 
\begin{itemize}[leftmargin=8mm]
	\item[(1)] For each pair $(\sigma_1,\sigma_2)\in\{+,-\}^2$, we define $f\succ e$ if
	\[ \sigma_1\cdot \pi(f)>0 \quad\text{or}\quad \big(\pi(f)=0\text{ and }\sigma_2\cdot f(0)>0\big).\]
	These four pairs give four orders, all Conradian. 
	\item[(2)] For each $p\in\R$ and pair $(\sigma_1,\sigma_2)\in\{+,-\}^2$, we define an order $f\succ e$ if
	\[ \sigma_1 \cdot (f(p)-p) > 0 \quad\text{or}\quad \big(f(p)=p \text{ and }\sigma_2\cdot \pi(f)>0\big).\]
	Note that, for $p\in\R\setminus\Q$, the only map $f\in G_\lambda$ satisfying $f(p)=p$ is $f=e$, hence we get the same order for $\sigma_2=+$ or $\sigma_2=-$. Otherwise the orders described in (2) are pairwise distinct, and none are Conradian.
\end{itemize}

Rivas proved that these orders are the only left-orders on the Baumslag-Solitar group $BS(1,2)$ \cite[Theorem 4.2]{Rivas_BS12}, and a complete description for $BS(1,m)$ can be found in \cite[Section 1.2.2]{DeroinNavasRivas}. We provide an alternate proof of this result, which moreover implies that all $G_\lambda$ are fully left-orderable.
\begin{Lemma}\label{lem:classification_of_orders}
	Let $S\subseteq G_\lambda$ and suppose that no left-order $\succ$ in the previous list satisfies $s\succ e$ for all $s\in S$. Then $e \in S^+$.
\end{Lemma}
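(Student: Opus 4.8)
The plan is to prove the contrapositive: assuming $e\notin S^+$, I will construct a left-order in the list satisfying $s\succ e$ for all $s\in S$. The key structural fact is that $G_\lambda\simeq \Z[\lambda,\lambda^{-1}]\rtimes_\lambda\Z$ is metabelian with abelianization mapping onto $\Z$ via $\pi$, and that $\Z[\lambda,\lambda^{-1}]$ embeds densely into $\R$. So I would first look at $\pi(S)\subseteq\Z$. If $\pi(S)\subseteq\Z_{>0}$ (or symmetrically $\pi(S)\subseteq\Z_{<0}$), then the order of type (1) with $\sigma_1=+$ (resp.\ $\sigma_1=-$) already puts every element of $S$ above $e$, and we are done. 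The remaining, main case is when $\pi(S)$ contains elements of both signs, or $0$; then $S$ must contain (products giving) a nontrivial translation, and we must build an order of type (2).

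The heart of the argument is the following: if $e\notin S^+$, I claim there is a point $p\in\R$ (the "fixed point" around which to order) together with a sign $\sigma_1$ such that $\sigma_1(s(p)-p)\ge 0$ for all $s\in S$, with equality only forcing a consistent secondary comparison. To find $p$, I would consider the subset $S_0=\{s\in S: \pi(s)=0\}$, which consists of translations $x\mapsto x+c_s$ with $c_s\in\Z[\lambda,\lambda^{-1}]$; if some $c_s<0<c_{s'}$ for $s,s'\in S_0$ then already $e\in S^+$ (a suitable positive combination of two translations with opposite signs hits $0$, using that $\Z[\lambda,\lambda^{-1}]$ is dense and that we can scale by conjugation $b^n(\cdot)b^{-n}$ to adjust magnitudes), contradiction; so all translations in $S_0$ have the same sign, say all $c_s\ge 0$, fixing $\sigma_2=+$ in (2)'s degenerate behaviour. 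For the elements $s$ with $\pi(s)=n\ne 0$, each such $s$ is an affine map $x\mapsto \lambda^n x+c$ with a unique fixed point $p_s=c/(1-\lambda^n)\in\Q$; the condition "$e\notin S^+$" should translate, via a convexity/Hahn--Banach-style argument on the real line (the maps act on $\R$, and membership of $e$ in $S^+$ is equivalent to $0$ being reachable), into the existence of a single $p$ lying on the correct side of all these fixed points, i.e.\ such that all $s\in S$ move $p$ in one direction.

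To make the previous paragraph rigorous I would argue as follows: consider the action of $S^+$ on $\R$; $e\notin S^+$ means no finite product of elements of $S$ equals the identity map, hence (since these are orientation-preserving affine maps, and a nonidentity orientation-preserving affine map either is a nontrivial translation or has a unique attracting/repelling fixed point) there is a "coherent dynamics" — one can take $p$ to be a point in the complement of the union of fixed-point intervals, or more carefully, pass to the semigroup generated and use that an orientation-preserving affine semigroup not containing the identity has a common "sink" behaviour on one side. Concretely: let $p$ be chosen as $\sup$ or $\inf$ of an appropriate $S^+$-invariant set; then set $f\succ e \iff \sigma_1(f(p)-p)>0$ or ($f(p)=p$ and $\sigma_2\pi(f)>0$), and check $s\succ e$ for all $s\in S$. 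One must verify the chosen $p$ is not a fixed point of any $s\in S$ unless the secondary condition is satisfied, and handle the subtlety for $p\notin\Q$ (where type (2) collapses $\sigma_2$), versus $p\in\Q$.

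The main obstacle I anticipate is the construction of the point $p$ and the proof that $e\notin S^+$ genuinely forces such a $p$ to exist — this is essentially a one-dimensional dynamics / convexity statement about semigroups of orientation-preserving affine maps of $\R$, analogous in spirit to the Hahn--Banach separation used in Proposition~\ref{p:discrete_Chow} but now in the non-abelian, infinite-dimensional (over $\Q$) setting of $\Z[\lambda,\lambda^{-1}]$. I expect the correct formulation is: either some order of type (1) works (the "Conradian case", handled by projecting to $\Z$ via $\pi$), or $\pi(S^+)$ is not contained in a half-line, in which case $S^+$ contains nontrivial translations of, without loss of generality, only nonnegative displacement, and then the set of $p\in\R$ with $s(p)\ge p$ for all $s\in S$ is a nonempty closed "half-line-like" set whose boundary point gives the desired order; the delicate check is that $0\notin S^+$ exactly rules out this set being empty, and that at a boundary point the secondary comparison via $\pi$ is automatically consistent. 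I would organize the final write-up as: (a) reduce to $0\in\pi(S^+)$ or $\pi(S)$ straddling $0$; (b) show translations in $S^+$ are one-signed; (c) exhibit $p$; (d) verify the resulting type-(2) order dominates $S$.
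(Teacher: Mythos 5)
Your overall strategy (contrapositive: from $e\notin S^+$, either a type (1) order works via $\pi$, or one must locate a reference point $p$ for a type (2) order) matches the shape of what is true, but the proposal leaves precisely the crucial step unproven, and you flag this yourself (``the main obstacle I anticipate is the construction of the point $p$''). The gap is this: in the case where $\pi(S)$ straddles $0$, i.e.\ there exist $s,t\in S$ with $\pi(s)<0<\pi(t)$, no type (1) order can work, and you must show that $e\notin S^+$ forces the existence of some $p\in\R$ moved weakly in one direction by every element of $S$ (with a consistent tie-break on stabilizers). Gesturing at a ``Hahn--Banach-style'' or ``common sink'' argument does not establish this; the danger is exactly that products such as $s^mt^n$ of a contracting and an expanding generator can be translations even when no single generator is one, so your step (b), which only inspects the translations already present in $S$, does not control the translations produced inside $S^+$.

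The paper closes this gap with a concrete combinatorial argument that you would need to reproduce in some form. Writing $L_f=\{p\in\R\mid f(p)<p\}$ and $R_f=\{p\in\R\mid f(p)>p\}$, the failure of every type (2) order (after discarding the easy stabilizer case, where two elements of a cyclic stabilizer with $\pi$-values of opposite signs already give $e\in S^+$) is equivalent to $\bigcup_{f\in S}L_f=\bigcup_{f\in S}R_f=\R$. Choosing $s,t\in S$ with $\pi(s)<0<\pi(t)$ and $p_s\le p_t$, one gets a leftward translation $s^mt^n\in S^+$ directly; for a rightward one, the key observation is that the compact interval $[p_s,p_t]$ must be covered by the open sets $R_f$, which form a union of two chains under inclusion (half-lines opening rightward and half-lines opening leftward), hence by at most two of them, and a short case analysis on which two then manufactures a rightward translation as a product of powers of two elements of $S$. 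Local cyclicity of $\ker\pi\le(\Q,+)$ then yields $e\in S^+$ from translations of both signs. Without this (or an equivalent) argument your proof does not go through. As a minor additional point, your appeal to conjugation by $b^n$ to combine two opposite translations is not available inside $S^+$ unless $b\in S$; the correct tool there is again that $\Z[\lambda,\lambda^{-1}]$ is locally cyclic.
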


	For each affine transformation $f \colon x\mapsto \lambda^nx+c \in G_\lambda$, we define
	\begin{align*}
		F_f & := \{p\in\R \mid f(p)=p \}, \\
		L_f & := \{p\in\R \mid f(p)<p \}, \\
		R_f & := \{p\in\R \mid f(p)>p \}.
	\end{align*}
	We decompose $G_\lambda\setminus\{e\}$ into three classes, as follows. 
	\begin{itemize}[leftmargin=8mm]
		\item $f$ is \emph{expanding} if $n=\pi(f)>0$. In this case
		\[ F_f=\{p_f\},\quad L_f = (-\infty,p_f),\quad\text{and}\quad R_f=(p_f,+\infty), \]
		where $p_f=\frac{c}{1-\lambda^n}$ is the unique fixed point of $f$.
		\item $f$ is \emph{contracting} if $n=\pi(f)<0$. In this case
		\[ F_f=\{p_f\},\quad L_f = (p_f,+\infty),\quad\text{and}\quad R_f=(-\infty,p_f). \]
		\item $f$ is a \emph{translation} if $\pi(f)=0$. We have $R_f=\R$ if $c>0$, or $L_f=\R$ if $c<0$.
	\end{itemize}
\begin{center}
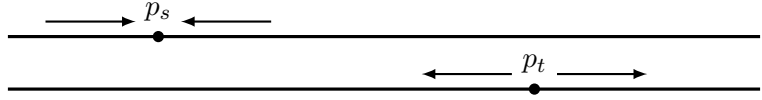

	\begin{tikzpicture}
		\draw[very thick] (-5,0) -- (5,0);
		\node[circle, inner sep=1.5pt, fill=black, label=above:$p_s$] at (-3,0) {};
		\draw[thick, latex-] (-3.3,.2) -- (-4.5,.2);
		\draw[thick, latex-] (-2.7,.2) -- (-1.5,.2);
	\end{tikzpicture}
	
	\begin{tikzpicture}
		\draw[very thick] (-5,0) -- (5,0);
		\node[circle, inner sep=1.5pt, fill=black, label=above:$p_t$] at (2,0) {};
		\draw[thick, latex-] (.5,.2) -- (1.7,.2);
		\draw[thick, latex-] (3.5,.2) -- (2.3,.2);
	\end{tikzpicture}
	\captionsetup{margin=7mm, font=small}
	\captionof{figure}{A contracting map $s$ and an expanding map $t$.}
\end{center}
\begin{proof}[Proof of Lemma~\ref{lem:classification_of_orders}]
	We first consider type (2) orders. By assumption, none of them satisfies $s\succ e$ for each $s \in S$. 
	For each $p\in\mathbb R$, this means one of two things: either (i) there exist $s,t\in S$ such that $s(p)=t(p)=p$ and $\pi(s)\le 0\le \pi(t)$, or (ii) there exist $s,t\in S$ such that $s(p)<p<t(p)$. We consider the stabilizer
	\[ \Stab(p)=\{f\in G_\lambda:f(p)=p\}. \]
	Observe that $\pi\colon \Stab(p)\to\Z$ is injective, hence $\Stab(p)$ is cyclic (possibly trivial if $p\in\R\setminus\Q$). In case (i), we have $s,t\in S\cap\Stab(p)$ with $\pi(s)\le 0\le \pi(t)$ and it follows that $e\in S^+$. From now on, we suppose that we are in the second case (ii) for each $p\in\R$. Equivalently, we suppose that
	\begin{equation} \label{eq:everyone_moves} \tag{$\star$}
		\bigcup_{f\in S} L_f = \bigcup_{f\in S}R_f = \R.
	\end{equation}
	
	Now consider type (1) orders. By assumption, no order satisfies $s\succ e$ for all $s\in S$. Once again, this means one of two things. In the first case, there exist $s,t\in S$ such that $\pi(s)=\pi(t)=0$ and $s(0)\le 0\le t(0)$ and therefore $e\in S^+$ (since $\ker\pi\le (\Q,+)$ is locally cyclic). We suppose that we are in the second case
	\begin{equation}  \label{eq:contract_and_expand} \tag{$*$}
		\exists s,t\in S,\quad \pi(s)<0<\pi(t).
	\end{equation}
	Without loss of generality (up to the outer automorphism $f\mapsto -f(-\,\cdot\,)$), we may assume $p_s\le p_t$. We highlight translations in both directions in $S^+$.
	\begin{itemize}[leftmargin=8mm]
		\item Using (\ref{eq:contract_and_expand}), there exist $m,n\in\Z_{>0}$ such that $\pi(s^mt^n)=m\pi(s)+n\pi(t)=0$. Moreover, $s^mt^n(p_t)=s^m(p_t)\le p_t$. We conclude that $s^mt^n\in S^+$ is a translation to the left (possibly the identity).
		\item In order to satisfy (\ref{eq:everyone_moves}), we still need
		\[ [p_s,p_t] \subset \bigcup_{f\in S} R_f. \]
		Observe that $[p_s,p_t]$ is compact, the sets $R_f$ are open, and their poset (with respect to inclusion) is the union of two chains. Therefore $[p_s,p_t]$ is covered by at most two $R_f$ sets (with $f\in S$). We have four cases:
		\begin{itemize}[leftmargin=5mm]
			\item There exists $u\in S$ such that $R_u=\R$, then $u$ is our translation to the right.
			\item There exists $u\in S$ such that $R_u=(p_u,+\infty)$ with $p_u< p_s$. Using an argument as above, we find a translation to the right of the form $s^mu^n$.
			\item There exists $v\in S$ such that $R_v=(-\infty,p_v)$ with $p_t< p_v$. Using an argument as above, we find a translation to the right of the form $t^mv^n$.
			\item There exist $u,v\in S$ such that $R_u=(p_u,+\infty)$ and $R_v=(-\infty,p_v)$ with $p_u< p_v$. Then we find a translation to the right of the form $u^mv^n$.
		\end{itemize}
	\end{itemize}
	This proves that $S^+$ contains translations in both directions, hence $e \in S^+$ (as $\ker\pi=\Z[\lambda,\lambda^{-1}]\le \Q$ is locally cyclic).
\end{proof}
\begin{Corollary} \label{cor:affine_fully}
	For every $\lambda\in\Q_{>1}$, the group $G_\lambda$ is fully left-orderable. 
	
	Every partial left-order can be extended to a left-order of type {\normalfont(1)} or {\normalfont(2)}, and so every left-order on $G_\lambda$ is of type {\normalfont(1)} or {\normalfont(2)}.
\end{Corollary}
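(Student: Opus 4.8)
The plan is to derive this almost immediately from Lemma~\ref{lem:classification_of_orders}, using the dictionary between partial left-orders and subsemigroups omitting the identity element established in Section~\ref{s:orders_and_identity}.

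First I would take an arbitrary partial left-order on $G_\lambda$ and identify it with its positive cone $P$, a subsemigroup of $G_\lambda$ with $e\notin P$. Since $P$ is already closed under multiplication, $P^+ = P$, and hence $e\notin P^+$. The contrapositive of Lemma~\ref{lem:classification_of_orders} (applied with $S := P$) then yields a left-order $\succ$ from the list (1)--(2) such that $s\succ e$ for every $s\in P$; equivalently, $P$ is contained in the positive cone of $\succ$. Thus every partial left-order extends to a left-order of type (1) or (2), which is precisely full left-orderability together with the first half of the second assertion.

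Second, for the claim that every left-order on $G_\lambda$ is itself of type (1) or (2): given a left-order $\succ'$ with positive cone $P'$, the previous paragraph provides a type (1)/(2) order $\succ$ whose positive cone $P$ satisfies $P'\subseteq P$. Since $\succ'$ is total, $G_\lambda = P'\sqcup\{e\}\sqcup (P')^{-1}$, so if some $g$ lay in $P\setminus P'$ we would have $g^{-1}\in P'\subseteq P$ and hence $e = g g^{-1}\in P$, contradicting $e\notin P$. Therefore $P = P'$ and $\succ' = \succ$, so $\succ'$ is of type (1) or (2).

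I do not anticipate a genuine obstacle here: all of the analytic content is already packaged in Lemma~\ref{lem:classification_of_orders}, and what remains is the purely formal observation that a subsemigroup omitting $e$ which contains the positive cone of a total left-order must equal it. The only point requiring a moment's care is that Lemma~\ref{lem:classification_of_orders} is stated for an arbitrary (possibly infinite) subset $S$, so it applies verbatim to $S = P$ with no need to pass to a finite generating set.
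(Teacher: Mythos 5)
Your proposal is correct and follows essentially the same route as the paper: identify the partial left-order with its positive cone $S$, note $e\notin S=S^+$, and apply Lemma~\ref{lem:classification_of_orders} to obtain an extending left-order of type (1) or (2). The extra paragraph verifying that a total left-order cannot be properly contained in another positive cone is a standard observation that the paper leaves implicit, and it correctly completes the final claim of the corollary.
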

\begin{proof}
	Consider a partial left-order $\succ$ on $G_\lambda$ and let $S:= \{g\in G_\lambda \mid g\succ e\}$ be its positive cone. Combining Lemma~\ref{lem:classification_of_orders} with the fact that $e\notin S=S^+$, we conclude the existence of a left-order $\bar\succ$ on $G_\lambda$ of type (1) or (2) such that $s\,\bar\succ\, e$ for all $s \in S$, that is, the left-order $\bar\succ$ extends $\succ$.
\end{proof}


The Identity Problem is decidable in metabelian group by \cite{DongSTOC2024}. This implies that the Left Order-Extension Problem is decidable. However, the interesting direction of Proposition~\ref{p:fully-orderable-identity} is really the opposite one: from the explicit classification of left-orders on $G_\lambda$, we deduce an efficient algorithm for the Identity Problem.
\begin{Theorem}
	The Identity Problem, for a finite set $S$ in $G_\lambda$, is decidable in $O(\abs S)$ operations.
\end{Theorem}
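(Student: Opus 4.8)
The plan is to turn the explicit classification of left-orders on $G_\lambda$ into a direct combinatorial test on $S$. By Corollary~\ref{cor:affine_fully}, $G_\lambda$ is fully left-orderable and every left-order on it is of type~{\normalfont(1)} or~{\normalfont(2)}; hence, by Proposition~\ref{p:fully-orderable-identity}(a), for a finite set $S\subseteq G_\lambda$ one has $e\in S^+$ if and only if no order of type~{\normalfont(1)} or type~{\normalfont(2)} has every element of $S$ in its positive cone. So it suffices to decide in $O(\abs{S})$ operations whether such an order exists.

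First I would dispose of the degenerate cases: if $e\in S$ return ``$e\in S^+$'', and if $S=\emptyset$ return ``$e\notin S^+$''. Otherwise, for each $s\in S$, written as $x\mapsto\lambda^{n_s}x+c_s$, I record $n_s=\pi(s)\in\Z$, the element $c_s=s(0)$ (which is non-zero whenever $n_s=0$, since $e\notin S$), and, when $n_s\ne 0$, the fixed point $p_s=c_s/(1-\lambda^{n_s})\in\Q$; this costs $O(\abs{S})$ arithmetic operations. Call $s$ \emph{expanding}, \emph{contracting}, or a \emph{translation} according to whether $n_s$ is positive, negative, or zero.

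Next I run three tests, each a single pass over $S$. First, a type-{\normalfont(1)} order puts $S$ in its positive cone exactly when all non-translation elements of $S$ share a common sign of $\pi$ and all translation elements share a common sign of $c$. Second, using the geometry recalled before Lemma~\ref{lem:classification_of_orders} --- each $L_f$ and $R_f$ is a half-line, all of $\R$, or empty, and each $F_f$ is the single point $p_f$ --- I claim that a type-{\normalfont(2)} order with $\sigma_1=+$ puts $S$ in its positive cone exactly when $S$ contains no translation with $c<0$ and
\[ \max\{p_s \mid s\in S \text{ expanding}\} \;<\; \min\{p_s \mid s\in S \text{ contracting}\}, \]
with the conventions $\max\emptyset=-\infty$ and $\min\emptyset=+\infty$; and third, symmetrically, a type-{\normalfont(2)} order with $\sigma_1=-$ works exactly when $S$ contains no translation with $c>0$ and $\max\{p_s\mid s \text{ contracting}\}<\min\{p_s\mid s\text{ expanding}\}$. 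If any of these succeeds I return ``$e\notin S^+$'', and otherwise ``$e\in S^+$''. Computing the four extremal fixed points together with the sign information is $O(\abs{S})$, so the procedure runs in $O(\abs{S})$ operations overall.

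The one place needing genuine care --- the main obstacle --- is establishing the type-{\normalfont(2)} criteria, i.e.\ reducing the continuum of type-{\normalfont(2)} orders to these two comparisons. The argument I have in mind: if the displayed strict inequality holds, then every real $p$ strictly between the two bounds lies in $R_s$ for every $s\in S$ (using that $S$ has no translation with $c<0$, so expanding, contracting, and positive-translation elements all satisfy $s(p)>p$) and is fixed by no element of $S$, hence is made positive by the type-{\normalfont(2)} order at $p$ with $\sigma_1=+$ regardless of $\sigma_2$; conversely, if the two bounds are equal, their common value is simultaneously the fixed point of an expanding and of a contracting element of $S$, so the stabilizer at that point (cyclic, with $\pi$ injective on it) contains elements of $S$ with $\pi$-values of both signs, the $\sigma_2$-condition fails, and one checks directly that no other type-{\normalfont(2)} order at that point helps either, so the strict inequality is the correct threshold. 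The remaining bookkeeping is a routine case split over whether each $s\in S$ is expanding, contracting, or a translation, which I would carry out in the full proof.
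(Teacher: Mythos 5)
Your algorithm and decision criterion coincide with the paper's: both reduce the problem, via Corollary~\ref{cor:affine_fully} and Proposition~\ref{p:fully-orderable-identity}, to deciding whether some type (1) or type (2) order puts $S$ in its positive cone, and both answer this by comparing the extremal fixed points of the expanding and contracting elements of $S$ --- your two comparisons are exactly the non-emptiness of the paper's intervals $R_S$ and $L_S$ (your separate type (1) test is harmless but redundant: whenever it succeeds, one of the two type (2) tests succeeds as well). Where you genuinely diverge is in proving that the criterion is \emph{necessary}. The paper argues topologically: the set $C_S$ of left-orders positive on $S$ is open in $\mathrm{LO}(G_\lambda)$, which is a Cantor set, so if non-empty it is uncountable and must contain a type (2) order with irrational reference point, and such a point lands in $R_S$ or $L_S$. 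You instead run a direct case analysis on fixed points and stabilizers, showing that a type (2) order with $\sigma_1=+$ positive on $S$ forces $\max\{p_s : s\text{ expanding}\}<\min\{p_s : s\text{ contracting}\}$ and excludes translations with $c<0$ (symmetrically for $\sigma_1=-$). Your sketch of this converse is sound --- the boundary case where the two bounds coincide is precisely the delicate one, and your stabilizer argument handles it --- though the full write-up should also record the easy case $\max>\min$ and the deferred bookkeeping for translations. Your route is more elementary and self-contained, avoiding any appeal to the Cantor-set structure of the space of left-orders, at the cost of a longer case split; the paper's route is shorter but imports that topological fact.
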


\begin{proof} The algorithm for solving the Identity Problem for $S$ is the following.
	\begin{enumerate}[leftmargin=8mm, label=(\arabic*)]
		\item Compute the two sets
		\begin{align*}
			L_S:= \bigcap_{s\in S} L_s = \begin{cases}
				\hspace*{3.5mm} \emptyset \hspace{6mm} \text{if $S$ contains a translation $s(x)=x+c$ with $c\ge 0$,} \\
				\big(\!\sup\{p_s\mid \pi(s)<0\},\;\inf\{p_s\mid \pi(s)>0\}\big) \hspace*{6mm} \text{otherwise,}
				\end{cases} \\
			R_S:=\bigcap_{s\in S} R_s = \begin{cases}
				\hspace*{3.5mm} \emptyset \hspace{6mm} \text{if $S$ contains a translation $s(x)=x+c$ with $c\le 0$,} \\
				\big(\!\sup\{p_s\mid \pi(s)>0\},\;\inf\{p_s\mid \pi(s)<0\}\big) \hspace*{6mm}\text{otherwise,}
			\end{cases}
		\end{align*}
		where $(a,b)=\emptyset$ if $a\ge b$. This takes $O(\abs S)$ operations, as it involves computing the fixed point of each map in $S$, and keeping track of different maxima and minima along the way.
		
		\smallskip
		
		\item If $L_S=R_S=\emptyset$
		
		Return YES to the Identity Problem.
		
		\noindent Otherwise
		
		Return NO to the Identity Problem.
	\end{enumerate}
	The correctness of this algorithm relies on the chain of equivalences
	\[ e\notin S^+ \;\iff\; C_S \ne \emptyset \;\iff\; (L_S\ne \emptyset \text{ or } R_S \ne \emptyset), \]
	where $C_S$ is the set of left-orders $\left\{ \succ\; \in \mathrm{LO}(G_\lambda) \;\big|\; \forall s\in S,\, s\succ e \right\}$ (and $\mathrm{LO}(G_\lambda)$ denotes the set of all left-orders). Corollary~\ref{cor:affine_fully} justifies the first equivalence. 
	
	We prove the second equivalence:
	
	\smallskip
	
	For the right-to-left direction, if $p\in R_S$ (resp.\ $p\in L_S$), then type (2) orders defined with reference point $p$ and $\sigma_1=+1$ (resp.\ $\sigma_1=-1$) belong to $C_S$.
	
	\smallskip
	
	For the left-to-right direction, we suppose that $C_S\ne\emptyset$. From our classification of left-orders or from \cite{Orders_virt_solvable}, we see that $\mathrm{LO}(G_\lambda)$ is a Cantor set. The set $C_S$ is open 
	and non-empty, and therefore is uncountable. In particular, there exists an order $\succ$ of type (2) in $C_S$ with reference point $p\in\R\setminus\Q$. We deduce that $p\in R_S$ if $\sigma_1=+1$, or $p\in L_S$ if $\sigma_1=-1$.
\end{proof}

\bigbreak

We end with a few remarks and directions for future research.
\begin{Question}
	Is $\mathrm{Aff}_+(\Q)$ fully left-orderable?
\end{Question}
Note that $\mathrm{Aff}_+(\R)$ is not fully left-orderable, as the subgroup generated by $a\colon x\mapsto\pi x$ and $b\colon x\mapsto x+1$ is isomorphic to $\Z\wr\Z$, which is not fully left-orderable (take $S=\{a^2,b,(aba)^{-1}\}$). In this case, we may however wonder if each partial left-order $S^+$ can always be extended to a left-order on $\langle S\rangle$.

\smallskip

Another interesting class of groups is $\mathrm{SOL}_T=\Z^2\rtimes_T\Z$ with $T\in \mathrm{SL}_2(\Z)$ a hyperbolic matrix (that is, with trace $>2$). Left-orders on $\mathrm{SOL}$ have been classified \cite[Section 3]{Orders_virt_solvable}. It would be interesting to see if these groups are fully left-orderable and our tools can be used here and turned into an efficient algorithm for the Identity Problem, as this is a non-trivial case in \cite{DongLICS2023}.

\smallskip

In order to answer these questions, it is important to note that one can restrict to finitely generated left-orders of finitely generated subgroups.
	\begin{Proposition} Let $G$ be a group. The following conditions are equivalent:
		\begin{enumerate}[leftmargin=8mm, label={\normalfont(\arabic*)}]
			\item $G$ is fully left-orderable.
			\item Every finitely generated subgroup of $G$ is fully left-orderable.
			\item For every finitely generated subsemigroup $S^+$ of $G$, if $S^+$ is a partial left-order (i.e., $e\notin S^+$), then $S^+$ extends to a left-order.
		\end{enumerate}
	\end{Proposition}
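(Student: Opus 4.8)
The plan is to prove the three conditions equivalent by showing (1) $\Rightarrow$ (2) $\Rightarrow$ (3) $\Rightarrow$ (1), reducing everything to a compactness argument over finitely generated pieces. The implication (1) $\Rightarrow$ (2) should be essentially immediate: if $H \leq G$ is finitely generated and $Q$ is a partial left-order on $H$, then $Q$ is a subsemigroup of $G$ omitting $e_G$, so by full left-orderability of $G$ it extends to a positive cone $P$ of a left-order on $G$; restricting back, $P \cap H$ is the positive cone of a left-order on $H$ extending $Q$. The implication (2) $\Rightarrow$ (3) is also easy: a finitely generated subsemigroup $S^+$ with $e \notin S^+$ is a partial left-order on the finitely generated subgroup $\langle S \rangle$, which is fully left-orderable by (2), so $S^+$ extends to a left-order on $\langle S \rangle$.

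The substantive direction is (3) $\Rightarrow$ (1). Here I would take an arbitrary partial left-order $P$ on $G$ (a subsemigroup omitting $e_G$) and try to extend it to the positive cone of a left-order on $G$. First I would note that full left-orderability of $G$ is equivalent to the statement that every subsemigroup of $G$ omitting $e_G$ extends to the positive cone of a left-order; and extending $P$ to a left-order amounts to finding a maximal subsemigroup omitting $e_G$ containing $P$ and checking it is a positive cone. The natural tool is Zorn's lemma applied to the poset of subsemigroups omitting $e_G$ that contain $P$: the union of a chain of such subsemigroups is again a subsemigroup omitting $e_G$ (since membership of $e_G$ would be witnessed by a finite product, hence already in some member of the chain), so there is a maximal element $\overline{P}$. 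It remains to show $\overline{P}$ is the positive cone of a left-order, i.e. that $\overline{P} \sqcup \overline{P}^{-1} \sqcup \{e_G\} = G$.

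The key obstruction — and the place where hypothesis (3) is actually used — is proving the \emph{totality} of $\overline{P}$: for every $g \in G$ with $g \neq e_G$, either $g \in \overline{P}$ or $g^{-1} \in \overline{P}$. Suppose not; then the subsemigroup $\langle \overline{P}, g \rangle$ generated by $\overline{P} \cup \{g\}$ must contain $e_G$, and likewise $\langle \overline{P}, g^{-1} \rangle$ contains $e_G$. Each such occurrence is witnessed by a finite product, i.e. a word $w(p_1, \dots, p_k, g)$ with $p_i \in \overline{P}$ equal to $e_G$. Let $S$ be the finite set consisting of these finitely many $p_i$'s (from both witnesses) together with $g$; then $e_G \in \langle S \rangle^+$, but this finite set generates a subsemigroup that ought to omit $e_G$ after extension — the contradiction comes from the fact that $g$ and $g^{-1}$ cannot both be excludable while $S^+$ omits $e_G$. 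To make this precise, I would instead argue at the level of the finitely generated subgroup $\langle S \rangle$: pick any finitely generated subsemigroup $T^+ \subseteq \overline{P} \cap \langle S \rangle$ large enough to contain all the witnessing elements $p_i$; then $e_G \notin T^+$ (as $T^+ \subseteq \overline{P}$), so by (3) the subsemigroup $T^+$ extends to a left-order $\preceq$ on $\langle T \rangle = \langle S \rangle$, and under this order $g$ is comparable to $e_G$, say $g \succ e_G$; but then $g \in \langle T^+, g\rangle^+ \subseteq$ positive cone of $\preceq$, which omits $e_G$, contradicting $e_G \in \langle \overline{P}, g\rangle^+$. Thus $\overline{P}$ is total, and therefore the positive cone of a left-order on $G$ extending $P$, establishing (1). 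The delicate point to get right is organising the finitely many witnessing products so that a single finitely generated subsemigroup inside $\overline{P}$ captures all the relevant data and lives inside a single finitely generated subgroup, so that (3) can be applied.
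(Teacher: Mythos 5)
Your overall strategy for the substantive direction --- Zorn's lemma to produce a maximal subsemigroup $\overline{P}\supseteq P$ omitting $e$, then extracting finite witnesses from $e\in(\overline{P}\cup\{g\})^+$ and $e\in(\overline{P}\cup\{g^{-1}\})^+$ --- is exactly the paper's argument (the paper phrases it as $\neg(1)\Rightarrow\neg(2)\wedge\neg(3)$ in a single stroke). However, your cyclic organisation $(1)\Rightarrow(2)\Rightarrow(3)\Rightarrow(1)$ has a genuine gap at the link $(2)\Rightarrow(3)$. For your $(3)\Rightarrow(1)$ step to work, condition (3) must be read as ``$S^+$ extends to a left-order on $G$'' (or at least on a group containing the auxiliary element $g$); but your proof of $(2)\Rightarrow(3)$ only produces a left-order on $\langle S\rangle$. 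These are not the same thing: a left-order on a finitely generated subgroup need not extend to one on $G$ without a further compactness argument. The symptom of this mismatch surfaces in your $(3)\Rightarrow(1)$ step as the unjustified equality $\langle T\rangle=\langle S\rangle$: you have $T\subseteq\overline{P}$ while $g\notin\overline{P}$, so there is no reason for $g$ to lie in $\langle T\rangle$; and if the order supplied by (3) lived only on $\langle T\rangle$, then $g$ need not be comparable to $e$ there and your contradiction evaporates. So as written, what your $(2)\Rightarrow(3)$ delivers is strictly weaker than what your $(3)\Rightarrow(1)$ consumes.

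The repair is easy and brings you back to the paper's structure: read (3) as ``extends to a left-order on $G$''; note that $(1)\Rightarrow(3)$ is then trivial (a finitely generated subsemigroup omitting $e$ is in particular a partial left-order on $G$), keep your $(1)\Rightarrow(2)$, and run your Zorn/finite-witness argument once to get both $\neg(1)\Rightarrow\neg(3)$ and $\neg(1)\Rightarrow\neg(2)$: with $T\subseteq\overline{P}$ the finite set of witnessing elements, any total left-order on $G$ (or on $\langle T,g\rangle$) whose positive cone contains $T$ must contain $g$ or $g^{-1}$, hence contains $e$; so $T^+$ does not extend to a left-order on $G$, and $\langle T,g\rangle$ is a finitely generated subgroup that is not fully left-orderable. (Alternatively, $(2)\Rightarrow(3)$ can be salvaged by the standard compactness argument --- apply (2) to the subgroup generated by $S$ together with any finite collection of further elements to be signed --- but that is extra machinery your write-up does not supply.)
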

	\begin{proof}
		Using Zorn's lemma, every positive cone is included inside a maximal one. If $G$ is not fully left-orderable, then there exists a maximal positive cone $P$ such that $P\sqcup\{e\}\sqcup P^{-1}\ne G$. In particular, there exists $g\in G\setminus(P\sqcup\{e\}\sqcup P^{-1})$. By maximality, we have $e\in (P\cup\{g\})^+$ and $e\in (P\cup\{g^{-1}\})^+$. In particular, there exists a finite subset $S\subseteq P$ such that $e\in(S\cup\{g\})^+$ and $e\in(S\cup\{g^{-1}\})^+$. Therefore $S^+$ cannot be extended to a total order, despite $e\notin S^+$ (since $S^+\subseteq P$). Moreover, the finitely generated subgroup $\langle S,g\rangle$ is not fully left-orderable.
	\end{proof}

\smallskip

\begin{Question}
	Is the Left-Order Extension Problem decidable for metabelian groups? Is the Bi-Order Extension Problem decidable for metabelian groups?
\end{Question}


\section*{Acknowledgements}
The first author acknowledges support from the Swiss National Science Foundation grant 200020-200400. The second and third named authors acknowledge a Scientific Exchanges grant (number IZSEZ0$\_ $213937) of the Swiss National Science Foundation. The second author would like to thank Universität Bern for their hospitality and support during the initial stages of writing this paper.


\bibliographystyle{plain}

\end{document}